\newcommand{\be}{\begin{equation}}
\newcommand{\ee}{\end{equation}}
\newcommand{\ba}{\begin{eqnarray}}
\newcommand{\ea}{\end{eqnarray}}
\newcommand{\bal}{\begin{align}}
\newcommand{\eal}{\end{align}}
\newcommand{\baln}{\begin{align*}}
\newcommand{\ealn}{\end{align*}}
\newcommand{\bi}{\begin{itemize}}
\newcommand{\ei}{\end{itemize}}
\newcommand{\bn}{\begin{enumerate}}
\newcommand{\en}{\end{enumerate}}
\newcommand{\bbm}{\begin{bmatrix}}
\newcommand{\ebm}{\end{bmatrix}}
\newcommand{\bpm}{\begin{pmatrix}}
\newcommand{\epm}{\end{pmatrix}}
\newcommand{\bp}{\begin{proof}}
\newcommand{\ep}{\end{proof}}
\newcommand{\nn}{\nonumber}
\newcommand{\mr}{\ensuremath{\mathrm}}
\newcommand{\scr}{\ensuremath{\mathscr}}
\newcommand{\mc}{\ensuremath{\mathcal}}
\newcommand{\mf}{\ensuremath{\mathfrak}}
\newcommand{\ov}{\ensuremath{\overline}}
\newcommand{\sm}{\ensuremath{\setminus}}
\newcommand{\wt}{\ensuremath{\widetilde}}
\newcommand{\ga}{\ensuremath{\gamma}}
\newcommand{\Om}{\ensuremath{\Omega}}
\newcommand{\la}{\ensuremath{\lambda }}
\newcommand{\eps}{\ensuremath{\epsilon }}
\def\C{\mathbb{C}}
\def\R{\mathbb{R}}
\def\D{\mathbb{D}}
\def\N{\mathbb{N}}
\def\B{\mathbb{B}}
\def\A{\mathcal{A} _d}
\def\fp{\mathbb{C} \{ \mathfrak{z} \} }
\def\hardy{\mathbb{H} ^2 _d}
\def\mult{\mathbb{H} ^\infty _d}
\def\bH{\mathbb{H}}
\def\mrt{\mathrm{t}}
\def\nbdom{\mr{Dom} \, }
\def\nbran{\mr{Ran} \, }
\def\nbre{\mr{Re} \,}
\def\nbim{\mr{Im} \,}
\def\nbker{\mr{Ker} \,}
\def\posncm{ \left( \scr{A} _d   \right) ^\dag _+ }
\newcommand{\bsm}{\left ( \begin{smallmatrix}}
\newcommand{\esm}{\end{smallmatrix} \right) }
\def\cH{\mathcal{H}}
\newcommand{\verteq}{\rotatebox{90}{$\,=$}}
\newcommand{\F}{\ensuremath{\mathbb{F} }}
\newcommand{\inv}[1]{\rotatebox[origin=c]{180}{#1}}
\newcommand{\nt}{\ensuremath{\stackrel{\leftarrow}{\scriptstyle \sphericalangle}}}
\newcommand{\ip}[2]{\ensuremath{\langle {#1} , {#2} \rangle}}
\newcommand{\ipcn}[2]{\ensuremath{\left( {#1} , {#2} \right) _{\C ^n}}}
\newcommand{\dom}[1]{\ensuremath{\mathrm{Dom} \left( {#1} \right) }}
\newcommand{\ran}[1]{\ensuremath{\mathrm{Ran} \left( {#1} \right) }}
\renewcommand{\ker}[1]{\ensuremath{\mathrm{Ker} ({#1}) }}
\numberwithin{equation}{section}
\numberwithin{subsection}{section}
\newtheorem{thm}[subsection]{Theorem}
\newtheorem{lemma}[subsection]{Lemma}
\newtheorem{prop}[subsection]{Proposition}
\newtheorem{cor}[subsection]{Corollary}
\newtheorem*{thm*}{Theorem}
\theoremstyle{definition}
\newtheorem{defn}[subsection]{Definition}
\newtheorem{remark}[subsection]{Remark}
\newtheorem{eg}[subsection]{Example}
\begin{document}
\title{Fatou's Theorem for Non-commutative Measures}
\date{}

\author{Michael T. Jury\thanks{Supported by NSF grant DMS-1900364}}
\affil[1]{ University of Florida}

\author[2]{Robert T.W. Martin\thanks{Supported by NSERC grant 2020-05683}}
\affil[2]{ University of Manitoba}

\bibliographystyle{unsrt}
\maketitle
\onehalfspace

\begin{abstract}
A classical theorem of Fatou asserts that the Radon-Nikodym derivative of any finite positive Borel measure, $\mu$, with respect to Lebesgue measure on the complex unit circle, is recovered as the non-tangential limits of its Poisson transform in the complex unit disk. This positive harmonic Poisson transform is the real part of an analytic function whose Taylor coefficients are in fixed proportion to the conjugate moments of $\mu$.

Replacing Taylor series in one variable by power series in several non-commuting variables, we show that  Fatou's Theorem and related results have natural extensions to the setting of positive harmonic functions in an open unit ball of several non-commuting matrix-variables, and a corresponding class of positive \emph{non-commutative (NC) measures}. Here, an NC measure is any positive linear functional on a certain self-adjoint unital subspace of the Cuntz-Toeplitz algebra, the $C^*-$algebra generated by the left creation operators on the full Fock space. 
\end{abstract}

\section{Introduction}

The goal of this paper is to extend results from classical measure theory and the theory of Hardy Spaces of analytic functions in the open unit disk, $\mathbb D$, in the complex plane, from one to several non-commuting variables. In particular we are interested in Fatou's theorem \cite{Fatou}, which recovers the Radon-Nikodym derivative of a positive measure $\mu$ on the unit circle (with respect to Lebesgue measure) from the boundary values of its Poisson integral transform. (This positive harmonic Poisson transform is the real part of an analytic function $h_\mu$, given by the Herglotz-Riesz integral of $\mu$.)  In the non-commutative (NC) setting, positive measures on the circle are replaced with positive linear functionals on the \emph{free disk system} (these will be called `NC measures'; all terminology will be defined carefully below), the NC multi-variable analogue of the operator system of the disk algebra. (Here, recall that the disk algebra is the unital Banach algebra of analytic functions in $\D$ which extend continuously to the boundary, equipped with the supremum norm.) The notion of Herglotz-Riesz integral transform has a natural extension to this non-commutative setting, and the NC Herglotz-Riesz transform of any positive NC measure, $\mu$, is an NC Herglotz function, $H_\mu$, with positive real part in a certain non-commutative, multi-variable unit row-ball.  We are then faced with two basic problems: First, develop a suitable definition of the Lebesgue decomposition $\mu=\mu_{ac}+\mu_s$ of an NC measure (with respect to the `vacuum state', which is the canonical analogue of Lebesgue measure), and secondly, assuming such an NC Lebesgue decomposition theory can be developed, find a method of recovering the absolutely continuous part, $\mu _{ac}$, of the NC measure $\mu$ from its NC Herglotz-Riesz transform $H_\mu$. We will put forward solutions to both of these problems. 

Let us describe the background in more detail; this will also allow us to establish some notation. We recall that the Hardy Space, $H^2  = H^2 (\D )$, is the Hilbert space of analytic functions in the unit disk with square-summable Taylor coefficients at $0 \in \D$ (endowed with the $\ell ^2$ inner product of these coefficients). Any element of the Hardy space has non-tangential boundary limits almost everywhere with respect to Lebesgue measure on the unit circle in the complex plane, $\partial \D$, and the identification of $h \in H^2$ with its boundary limits is an isometry into $L^2 (\partial \D )$.  The Hardy algebra, $H^\infty = H^\infty (\D )$, is the unital Banach algebra of bounded analytic functions in $\D$. Multiplication by any $h \in H^\infty$ defines a bounded linear map from $H^2$ into itself, so that $H^\infty$ can be viewed as the \emph{multiplier algebra} of $H^2 (\D )$. 

In measure theory and Hardy space theory, there is an (essentially) bijective correspondence between finite, positive, and regular Borel measures on the unit circle $\partial \D$, and contractive analytic functions in $\D$. Namely, 
if $b \in [H^\infty ]_1$ is a contractive analytic function in $\D$, then its Cayley Transform, 
$$ h_b := \frac{1+b}{1-b}, $$ is a \emph{Herglotz function}, an analytic function with positive semi-definite real part in $\D$. By the Herglotz-Riesz representation formula for positive harmonic functions, there is a unique positive measure, $\mu _b$, so that the Herglotz-Riesz transform of $\mu _b$ is 
$$ h_{\mu _b} (z)  :=  \int _{\partial \D} \frac{1 + z \zeta ^*}{1-z\zeta ^*} \mu _b (d\zeta) = h_b (z) - i \nbim h_b (0). $$ (In the above, and throughout, $c^* = \ov{c}$ denotes complex conjugate of a complex number $c \in \C$.) This measure, $\mu_b$, is called the Clark or Aleksandrov-Clark measure of $b \in [H^\infty ] _1$, and many properties of $b$ are reflected in those of $\mu _b$ \cite{Clark1972,Aleks2,Aleks1,Saks}. For example, a theorem of Fatou, \cite{Fatou} (see \cite[Chapter 3.3: Fatou's Theorem]{Hoff}), implies that the Radon-Nikodym derivative of $\mu _b$ with respect to normalized Lebesgue measure, $m$, on $\partial \D$ is given ($m -a.e.$) by the non-tangential (and in particular, radial) limits of the positive harmonic function $\nbre h_b$:
\ba \frac{\mu _b (d\zeta)}{m(d\zeta)} & =  & \lim _{z\inv{\nt} \zeta} \nbre \frac{1 + b(z)}{1-b(z)} \quad \quad (m-a.e.) \nn \\
& = & \lim _{z\inv{\nt} \zeta} (1 - b(z ) ^* ) ^{-1} (1 - b(z ) ^* b(z) ) (1 - b(z) ) ^{-1}; \quad m-a.e. \nn \\
& = & \lim _{z \inv{\nt} \zeta} \frac{1-|b(z)|^2}{|1-b(z)| ^2} \quad \quad (m-a.e.), \nn \ea where $\zeta \in \partial \D$, $z \in \D$, and $z \ \inv{\nt} \ \zeta$ denotes non-tangential convergence.  In particular, it follows that the Clark measure, $\mu _b$, of $b$ is singular with respect to Lebesgue measure if and only if $b$ is \emph{inner}, \emph{i.e.} has unimodular non-tangential limits $m-$a.e. on $\partial \D$. (Under the identification of $h \in H ^2$ with its boundary values, it is clear that any contractive analytic $b \in [ H^\infty ] _1$ is inner if and only if the multiplier $M_b$ is an isometry of $H^2$ into itself.) 

This bijective correspondence between positive measures and contractive analytic functions extends naturally to the non-commutative multi-variable setting of the \emph{Non-commutative} (NC) or \emph{free Hardy space}, \cite[Section 5]{Pop-freeharm} (see also \cite{JM-freeAC,JM-freeCE}). Here, the free Hardy space can be viewed as a Hilbert space of (graded) analytic functions in an open unit ball of $d-$tuples of matrices (of all sizes) taking values in matrices of all sizes \cite{JM-freeCE,Pop-freeholo,Pop-freeholo2,BMV,VinPop,KVV}. Elements of this NC Hardy space have Taylor series expansions indexed by the free monoid, $\F ^d$, the set of all words in $d$ letters. It follows that the free Hardy space is isomorphic to $\ell ^2 (\F ^d )$ in the same way that $H^2 (\D )$ is isomorphic to the square-summable sequences indexed by the non-negative integers, $\ell ^2 (\N _0 )$. 

By the Riesz-Markov representation theorem, any finite, positive, and regular Borel measure, $\mu$, on $\partial \D$, can be identified with a positive linear functional, $\hat{\mu }$ on $\scr{C} ( \partial \D )$, the commutative $C^*-$algebra of continuous functions on the circle:
$$ \hat{\mu} (f) := \int _{\partial \D } f (\zeta ) d\mu(\zeta ). $$ By the Weierstrass approximation theorem, 
$$ \scr{C} (\partial \D ) = \left( \mc{A} (\D ) + \mc{A} (\D ) ^* \right) ^{-\| \cdot \| _\infty}, $$ where $\mc{A} (\D )$ is the \emph{disk algebra}, the algebra of all analytic functions in $\D$ with continuous extensions to the boundary. In the above formula, elements of $\mc{A} (\D )$ are identified with their continuous boundary values and $\| \cdot \| _\infty$ denotes the supremum norm for continuous functions on the circle. The disk algebra can also be viewed as the norm-closed unital operator algebra generated by the \emph{shift}, $S := M_z$, the canonical isometry of multiplication by $z$ on the Hardy space, $H^2 (\D )$. The shift plays a central role in the theory of Hardy spaces \cite{Nik-shift,NF}.  The positive linear functional $\hat{\mu}$ is then completely determined by the moments of the measure $\mu$: 
\be \hat{\mu}  (S^k ) := \int _{\partial \D} \zeta ^k d\mu(\zeta ), \label{RHtrans} \ee and the Taylor series coefficients (at $0$) of the Herglotz-Riesz integral transform of $\mu$ are $$ h_\mu (0) = \hat{\mu} (I), \quad \quad h_{\mu ;k} := \frac{1}{k!} \frac{d ^k h_\mu}{dz ^k}  (0) =  2 \ov{\hat{\mu} (S^k)}, \quad k \geq 1. $$

The shift on $H^2 (\D )$ is isomorphic to the unilateral shift on $\ell ^2 (\N _0 )$. The square-summable sequences, $\ell ^2 (\N _0 )$, can in turn be viewed as a simple, directed tree starting from a single node and with one branch directed downward from each node to the next.  A canonical several-variable extension of $\ell ^2 (\N _0 )$ is then $\ell ^2 (\F ^d )$. If we view, as before, $\ell ^2$ of the free monoid as a simple directed tree starting from a single node and with $d$ branches directed downwards from each node, it is natural to define a $d-$tuple of isometries, the \emph{left free shifts}, $L_k$, $1\leq k \leq d$ which shift along these branches from nodes indexed by words of length $N$ to those of length $N+1$ (by appending letters to the left of words indexing the standard orthonormal basis). These left free shifts have pairwise orthogonal ranges so that the row operator $L := (L_1, \cdots , L_d ) : \ell ^2 (\F ^d ) \otimes \C ^d \rightarrow \ell ^2 (\F ^d)$ is an isometry from $d$ copies of $\ell ^2 (\F ^d )$ into one copy which we call the \emph{left free shift}. This space of square-summable free sequences, $\ell ^2 (\F ^d)$, can also be identified with the full Fock space over $\C ^d$, the direct sum of all tensor powers of $\C ^d$. The full Fock space will be denoted by $\bH ^2 _d$.  Under this isomorphism the left free shifts are conjugate to the left creation operators, see Section \ref{back} for more details. 

The immediate analogue of a positive measure in this non-commutative (NC) multi-variable setting is then a positive linear functional, or \emph{NC measure}, on the \emph{free disk system}:
$$ \scr{A} _d := \left( \A  + \A  ^* \right) ^{-\| \cdot \| }, $$ where $\A := \mr{Alg} \{ I , L_1 , \cdots , L_d \} ^{-\| \cdot \|}$ is the \emph{free disk algebra}, the operator norm-closed unital operator algebra generated by the left free shifts. As in the classical theory, elements of the free disk algebra can be identified with bounded (matrix-valued) analytic functions (in several non-commuting matrix variables) which extend continuously from the interior to the boundary of a certain non-commutative multi-variable open unit ball. Moreover, exactly as in Equation (\ref{RHtrans}), the NC moments of the NC measure, $\mu$, are in fixed proportion to the Taylor series coefficients of its NC Herglotz-Riesz transform, $H_\mu$. There is a long history and well-established precedent of viewing positive linear functionals on operator algebras as non-commutative measures, \emph{e.g.} in von Neumann algebra theory \cite{Kosaki,Takesaki}. (Indeed, von Neumann algebra theory has been called non-commutative measure theory \cite{Connes}.) 

Any NC measure can be viewed as a positive semi-definite quadratic form with dense domain $\mc{A} _d \subset \bH ^2 _d$, and we develop a Lebesgue decomposition theory of (positive) NC measures into absolutely continuous and singular parts using the Lebesgue decomposition theory for quadratic (\emph{i.e.} sesquilinear) forms due to B. Simon \cite[Section 2]{Simon1}, combined with (non-commutative) reproducing kernel techniques applied to the spaces of Cauchy transforms with respect to the NC measure. This Lebesgue decomposition theorem for (potentially unbounded) quadratic forms in Hilbert Space is similar in spirit to von Neumann's proof of the Radon-Nikodym Theorem and Lebesgue decomposition theory \cite[Lemma 3.2.3]{vN3}, and our computation of the Radon-Nikodym derivative of any positive NC measure is also reminiscent of this approach. In our theory, the Radon-Nikodym derivative of a non-commutative (NC) measure with respect to NC Lebesgue measure will be a (generally unbounded) positive semi-definite left Toeplitz operator in the sense of \cite{Pop-entropy,Pop-freeharm} and defines a positive free pluriharmonic function in the sense of Popescu \cite{Pop-freeharm}.

\begin{remark}[On boundary values]
The free Hardy space, $H^2 (\B ^d _\N )$, is a Hilbert space of non-commutative (NC) functions in an open unit ball, $\B ^d _\N$, of the \emph{NC universe},
\be \C ^d _\N := \bigsqcup _{n=1} ^\infty  \C ^{(n\times n ) \cdot d}; \quad \quad \C ^{(n\times n) \cdot d} := \C ^{n\times n} \otimes \C ^{1\times d}. \label{NCuni} \ee In the above, $\C ^{m \times n}$ denotes the $m \times n$ matrices with complex entries, and any element of $\C ^{n \times n} \otimes \C ^{1\times d}$ is viewed as a row $d-$tuple, $Z = (Z_1 , \cdots , Z_d )$, of $n \times n$ matrices. As described in \cite{VinPop}, one can equip $\C ^d$ with three natural operator space norms: Given $X \in \C ^d _\N$, 
$$ \| X \| _{\infty} := \mr{max} \{ \| X _1 \| , \cdots , \| X_d \| \},   \quad  \| X \| _{col}  := \left\| \sum _{j=1} ^d X_j ^* X_j \right\| ^{\frac{1}{2}}, \quad 
  \mbox{and} \quad  \| X \| _{row}  := \left\| \sum _{j=1} ^d X_j X_j ^* \right\| ^{\frac{1}{2}}. $$ 
The unit ball we consider, is the unit ball, $\B ^d _\N$, of $\C ^d _\N$ with respect to the row-norm, $\| \cdot \| _{row}$. That is, we consider the \emph{NC open unit row-ball}, 
\be \B ^d _\N = \bigsqcup _{n=1} ^\infty \B ^d _n; \quad \quad \B ^d _n := \left\{ \left. Z = (Z_1, \cdots, Z_d ) \in \C ^{(n\times n) \cdot d} \ \right| \ Z_1 Z_1 ^* + \cdots + Z_d Z_d ^* < I_n \right\}. \label{NCball} \ee The open unit ball of $\C ^d _\N$ with respect to the $\| \cdot \| _\infty$ operator space norm is the \emph{NC unit polydisk}, $\D ^d _\N$, consisting of all points $X = (X_1, \cdots , X_d ) \in \C ^d _\N$ so that $\| X _k \| <1$, for $1\leq k \leq d$.

The distinguished boundary of the NC unit row ball, $\partial \B ^d _\N$, can be identified with the set of all row co-isometries, \emph{i.e.} the set of all $X \in \C ^{(n\times n) \cdot d}$ obeying $X X ^*= X_1 X_1 ^* + \cdots + X_d X_d ^* = I_n$. (Note that there are no finite-dimensional row isometries.) Indeed, if we identify the components of $X \in \B ^d _n$, with the corresponding subset of $\C ^{n^2 \cdot d}$, then the Shilov boundary for the commutative algebra of complex functions in $n^2 d$ variables is the set of all co-isometries in $\C ^{n \times d n}$ \cite[Example 1.5.51]{Upbook}.  Moreover, as proven in \cite[Lemma 2.10]{SSS}, any (irreducible) point $X \in \C ^{(n\times n) \cdot d}$ with joint spectral radius equal to $1$, which is not in the joint similarity envelope of the NC unit ball, is jointly similar to a row co-isometry. On the other hand, as described in Subsection \ref{rkhs} below, if $X \in \C ^{(n\times n)\cdot d}$ is in the joint similarity envelope of the NC unit row ball then the point evaluation map $h \mapsto h(X)$ is bounded as a linear map of $H^2 (\B ^d _\N )$ into the Hilbert space $\left( \C ^{n\times n} , \mr{tr} _n \right)$.

It is natural to consider the boundary values of $h \in H^2 (\B ^d _\N )$ on the boundary of the NC unit row-ball, as well as to wonder whether the inner product in the NC Hardy space can be expressed as an integral over the boundary $\partial \B ^ d_\N$. Moreover, one may wonder if there is an exact analogue of Fatou's theorem in this context. Investigations of boundary values in NC Hardy spaces of functions in the NC unit polydisk (and in the NC unit balls with respect to the $\| \cdot \| _{col}$ and $\| \cdot \| _{row}-$norms) were studied in \cite{VinPop}. See also \cite[Chapters 14-16]{Voic2} which studies asymptotic tracial integral formulae of bounded non-commutative functions in $\D ^d _\N$ over the distinguished boundary, $\partial \D ^d _\N$ of $\D ^d _\N$, consisting of $d-$tuples of unitary matrices. In particular, \cite[Theorem 3.5]{VinPop} shows that one can construct an NC Hardy space of NC functions, $H^2 (\D ^d _\N )$ in $\D ^d _\N$ with inner product defined as a limit of tracial integrals with respect to product Haar measure over the boundary of $r \partial \D ^d _\N$, for $0<r<1$, and this recovers the Fock space inner product at least on NC polynomials $p,q \in H^2 (\D ^d _\N )$. Note, however, that $\B ^d _\N \subsetneq \D ^d _\N$, the NC unit row-ball is a proper subset of the NC unit polydisk, and a general element of $H^2 (\B ^d _\N)$ need not (and generally does not) extend to $\D ^d _\N$. (It is not difficult to construct examples of NC rational functions $\mathfrak{r}$ in $\hardy$ for which certain points $Z \in \D ^d _\N \sm \B ^d _\N$ do not belong to their domains, for example see \cite{JMS-NCrat}.)  Thus, it appears that generic elements of the Fock space cannot be represented as NC functions in the polydisk $\D ^d _\N$, and in particular there does not appear to be any way to sensibly assign levelwise boundary values on $\partial\D ^d _\N$ to arbitrary elements of the Fock space. Moreover, while the Fock space inner product of {\em polynomials} can be given a random matrix interpretation, this interpretation does {\em not} extend to all elements of the Hilbert space. On the other hand, \cite[Theorem 3.5]{VinPop} can also be applied to construct a Hardy space of NC functions in the row-ball $\B ^d _\N$ whose inner product is given by an asymptotic integral formula with respect to a family of invariant measures over each level of the boundary. However, this inner product does not coincide with the Fock space inner product and hence this gives a different NC Hardy space than the one we consider here.

At this point, it is not obvious whether or not the constructions of \cite{VinPop} can be modified in a suitable way to show that the inner product in $H^2 (\B ^d _\N)$, can be expressed in terms of integral formulae over the boundary, $\partial \B ^d _\N$. This would seem to be a necessary first step in developing an exact analogue of Fatou's theorem in this setting. While such an approach would be interesting and valuable, we will instead pursue a more abstract and operator-theoretic approach. By re-casting Fatou's theorem in purely operator theoretic language, we will develop a `dimension-free' proof of Fatou's theorem that applies to $H^2 (\B ^d _\N )$, independently of $d \in \N$. In particular, we recover the classical result for $H^2 (\D ) \simeq H^2 (\B ^1 _\N )$ as a special case. 
\end{remark}

\subsection{Readers' guide} Section~\ref{back} provides the necessary backgound material on the free disk algebra, NC function theory, and the formalism of non-commutative reproducing kernels. Section~\ref{NCmeasuresect} recalls the relevant background on NC measures and the NC Cauchy transform from \cite{JM-freeCE,JM-freeAC}. In Section~\ref{sec:simon} we describe Simon's approach to the Lebesgue decomposition of positive semi-definite (not necessarily bounded) quadratic forms in Hilbert space, see Theorem \ref{maxclosform}. We use this theory to define the Lebesgue decomposition of an NC measure into absolutely continuous and singular parts in Definition \ref{NCncld}; however at this stage it is not clear that the absolutely continuous and singular parts of NC measures are themselves NC measures. Section~\ref{FreeCTsect} then proves that this quadratic form approach produces NC measures by examining spaces of NC Cauchy transforms. This yields an NC Lebesgue decomposition of the original NC measure as the sum of absolutely continuous and singular (positive) NC measures in Theorem \ref{NCLD}. We further show that the NC Radon-Nikodym derivative, $T$, of any positive NC measure with respect to NC Lebesgue measure is a closed, positive semi-definite operator with the \emph{$L-$Toeplitz property}:
$$ \ip{\sqrt{T} L_k h}{\sqrt{T} L_j g}_{\bH ^2 _d} = \delta _{k,j} \ip{\sqrt{T} h}{\sqrt{T} g} _{\bH ^2 _d}; \quad \quad h,g \in \mr{Dom} \sqrt{T}, $$ in Theorem \ref{ACiffclosed}.  
Corollaries \ref{ACcor} and \ref{singcor} provide further characterizations of absolutely continuous and singular NC measures in terms of their Gelfand-Naimark-Segal spaces and their spaces of NC Cauchy transforms. Finally in Section~\ref{NCRN} we prove our NC Fatou theorem, Theorem \ref{main}, which shows how to recover the absolutely continuous part of an NC measure from its Herglotz-Riesz transform; the main tool is the theory of strong resolvent convergence of self-adjoint operators:
\begin{thm*}[Non-commutative Fatou Theorem] 
Let $\mu \in \posncm$ be any positive NC measure with NC Herglotz-Riesz transform $H_\mu$. The absolutely continuous part of $\mu$ is given by the formula:
$$ \mu _{ac} (a_1 ^* a_2 ) = \ip{\sqrt{T} a_1}{\sqrt{T} a_2}_{\bH ^2 _d}; \quad \quad a_1, a_2 \in \A, $$ where the NC Radon-Nikodym derivative, $T$, is a closed, positive semi-definite $L-$Toeplitz operator with dense domain in $\bH^2 _d$ so that $\mc{A} _d$ is a core for $\sqrt{T}$. This NC Radon-Nikodym derivative can be computed by the formula: 
$$ (T + \eps I ) ^{-1} = SOT-\lim _{r\uparrow 1} \left( \nbre H_\mu (rR) + \eps I \right) ^{-1}; \quad \quad \eps >0. $$
\end{thm*}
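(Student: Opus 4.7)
The existence of $T$ as a closed, positive semi-definite, $L$-Toeplitz operator with $\mc{A}_d$ a core for $\sqrt{T}$, together with the identity $\mu_{ac}(a_1^*a_2) = \ip{\sqrt{T}a_1}{\sqrt{T}a_2}_{\bH^2_d}$, are already delivered by Theorem \ref{ACiffclosed} combined with the NC Lebesgue decomposition of Theorem \ref{NCLD}. The genuinely new ingredient is the resolvent formula, so my plan is to prove it as an instance of strong resolvent convergence of self-adjoint operators, via the quadratic form machinery of Section~\ref{sec:simon}.

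First I would check that $T_r := \re{H_\mu(rR)}$ is, for each $r \in [0,1)$, a bounded, positive semi-definite, $L$-Toeplitz operator on $\bH^2_d$. Since $\|rR\|_{\mr{row}} = r < 1$, the tuple $rR$ lies in the operator-level row-ball, so $H_\mu(rR)$ is bounded; its real part is positive because $H_\mu$ is NC Herglotz; and $T_r$ is $L$-Toeplitz because it is a function of the right shifts alone and so lies in the commutant of $\{L_k\}$ in the appropriate sense. Next I would expand, for $a_1,a_2 \in \mc{A}_d$, a Poisson-type identity of the form
\[
\ip{T_r a_1}{a_2}_{\bH^2_d} \;=\; \mu(\Phi_r(a_2^* a_1)),
\]
where $\Phi_r$ is the (unital, completely positive) NC Poisson transform at radius $r$ obtained by substituting $(rL,rR)$ into the NC Herglotz-Riesz kernel. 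Since $\Phi_r(b) \to b$ in operator norm for $b \in \scr{A}_d$ as $r \uparrow 1$, this yields the pointwise form-convergence $\ip{T_r a_1}{a_2}_{\bH^2_d} \to \mu(a_2^* a_1)$ on $\mc{A}_d$.

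The final step is to upgrade pointwise form-convergence to SOT-convergence of resolvents. Using the uniform bound $\|(T_r+\eps I)^{-1}\| \leq 1/\eps$, I would extract an SOT-cluster point $R_\eps$ of the net $\{(T_r+\eps I)^{-1}\}_{r<1}$ and identify $R_\eps$ with $(T+\eps I)^{-1}$. The identification proceeds by passing to the limit in the sesquilinear form identity $\ip{(T_r+\eps I)h_r}{g}_{\bH^2_d} = \ip{h}{g}_{\bH^2_d}$, where $h_r := (T_r+\eps I)^{-1}h$, against test vectors $g \in \mc{A}_d$. Here Theorem~\ref{maxclosform} is decisive: although the pointwise form-limit $t^*(a_1,a_2) = \mu(a_2^*a_1)$ on $\mc{A}_d$ is in general not closable, its maximal closable restriction is precisely the AC form associated with $T$; and since $\mc{A}_d$ is a core for $\sqrt{T}$, this determines $(T+\eps I)^{-1}$ uniquely.

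The main obstacle, as I see it, lies squarely in this last step: the forms $t_r$ are \emph{not} monotone in $r$ — already the classical Poisson integral of a unit point mass on the circle is non-monotone — so one cannot directly invoke Simon's monotone form convergence theorem. One must instead work with a Mosco/$\Gamma$-type convergence criterion, or more concretely, carry out a careful accounting of how the singular part of $\mu$ is absorbed into the resolvent limit as $r \uparrow 1$. This is the operator-theoretic counterpart of the classical fact that, on the unit circle, the Poisson integral of the singular part $\mu_s$ tends to zero Lebesgue-a.e.\ even though its $L^1(m)$-norm is preserved, and it is exactly the mechanism by which the form-limit $\mu$ is replaced by $\mu_{ac}$ at the level of resolvents.
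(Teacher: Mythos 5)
Your setup is fine as far as it goes: the existence of $T$ and the formula for $\mu_{ac}$ do follow from Theorems \ref{NCLD} and \ref{ACiffclosed}, $T_r = \nbre H_\mu(rR)$ is indeed bounded, positive and $L$-Toeplitz, and the pointwise form convergence $\ip{T_r a_1}{a_2}_{\bH^2_d} \to \mu(a_1^*a_2)$ on $\A$ is exactly Lemma \ref{rapprox}. But the proof stops precisely where the real work begins. Your identification step --- extract a cluster point of $(T_r+\eps I)^{-1}$, pass to the limit in the form identity against test vectors $g \in \A$, and invoke Theorem \ref{maxclosform} plus the core property to conclude the cluster point is $(T+\eps I)^{-1}$ --- is not an argument. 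Pointwise form convergence on a core, together with the uniform bound $\|(T_r+\eps I)^{-1}\| \leq 1/\eps$, does not determine the limit of the resolvents: the vectors $h_r = (T_r+\eps I)^{-1}h$ lie outside $\A$, converge only weakly along a subnet, and the quantity $\ip{T_r h_r}{g}$ is not controlled by convergence of the forms on $\A$. Indeed, if your scheme worked as stated, the same reasoning applied to the form-limit $\mu$ itself (rather than $\mu_{ac}$) would force the resolvents to converge to the resolvent of an operator representing all of $\mu$, which cannot exist when $\mu_s \neq 0$. You flag this yourself --- the non-monotonicity of the forms and the need for the singular part to be ``absorbed'' --- but the appeal to a Mosco/$\Gamma$-type criterion is a placeholder: no such criterion is verified, and it is not clear one applies, since at the level of forms on the core nothing converges to $q_{\mu_{ac}}$; the singular part is still fully present in the limit of $q_{T_r}$.

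What actually closes the gap in the paper is a two-sided squeeze on the WOT cluster points $\delta$ of $\Delta_r = (I+T_r)^{-1}$, and none of its ingredients appear in your proposal. From one side, each bounded $I+T_r$ is factored by Popescu's entropy theorem as $y^{(r)}(R)^*y^{(r)}(R)$ with the explicit outer symbol $c_r M^R_{\Delta_r 1}$; WOT convergence of $\Delta_k$ then identifies $\delta = y(R)^{-1}y(R)^{-*}$ for a contractive right multiplier $y(R)^{-1} = cM^R_{\delta 1}$, and the radial convergence of the NC measures shows the vector state $m_{y^{\mrt}}$ is dominated by $\mu + m$; maximality of the absolutely continuous part (Theorem \ref{maxclosform}) then gives $T' := \delta^{-1} - I \leq T$ in the form order. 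From the other side, one needs the unbounded outer factorization $I+T = x(R)^*x(R)$ (Theorems \ref{NoCuntz} and \ref{tfactor}, which require a genuinely new argument ruling out a Cuntz part for an unbounded intertwiner) together with the harmonic-majorant/Schwarz inequality $x(rR)^*x(rR) \leq I + T_r$ (Lemma \ref{subharlem}); inverting via Kato's theorem relating form order to resolvent order, and using $x(rR)^{-1}x(rR)^{-*} \to (I+T)^{-1}$ in SOT, yields $T \leq T'$. Hence $T = T'$, every cluster point is $(I+T)^{-1}$, and only then does the routine resolvent-identity argument upgrade WOT to SOT and handle general $\eps > 0$. Without these factorization and majorization steps --- which are the mechanism by which the singular part disappears in the resolvent limit --- your outline records the statement to be proved rather than a proof of it.
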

In the above, and throughout, $SOT$ denotes the strong operator topology and $R = (R_1, \cdots , R_d)$ is the row isometric \emph{right free shift} whose components, $R_k$, act as $R_k e _\alpha = e_{\alpha k}$ on the standard orthonormal basis, $\{ e_\alpha \} _{\alpha \in \F ^d }$ of $\ell ^2 (\F ^d )$. As a corollary to this main result, we recover (half of) a familiar fact from Hardy space theory in Corollary \ref{innercor} -- if $B$ is an NC inner function in $\B ^d _\N$, then its NC Clark measure is singular with respect to NC Lebesgue measure.

\section{Background: The free Hardy space} \label{back}

The Hardy space, $H^2 (\D )$, of analytic functions in the complex open unit disk can be defined in two equivalent ways: On one hand, 
$$ H^2 (\D ) := \left\{ \left. f(z) = \sum _{k=0} ^\infty \hat{f} _n z^n  \in \scr{O} ( \D ) \right| \ \sum |\hat{f} _n | ^2 < \infty  \right\} $$ is the Hilbert space of all analytic functions in $\D$ with square-summable MacLaurin series coefficients (and with the $\ell ^2-$inner product of these coefficients). Alternatively, 
$$ H^2 (\D ) := \cH (k); \quad \quad k(z,w) := \frac{1}{1-zw^*}; \quad z,w \in \D, $$ is the unique reproducing kernel Hilbert space (RKHS) corresponding to the positive sesqui-analytic Szeg\"{o} kernel, $k$, on $\D$. (Here, recall that there is a bijective correspondence between reproducing kernel Hilbert spaces of complex-valued functions on a set $X$, and positive kernel functions on $X \times X$. See Subsection \ref{rkhs} for a detailed description.)

One also has the corresponding two equivalent definitions of the Hardy algebra $H^\infty (\D )$:
$$ H^\infty (\D ) := \left\{ h \in \scr{O} (\D ) \ \left| \ \sup _{z \in \D } | h(z ) | < \infty \right. \right\}, $$ and
$$ H^\infty (\D ) := \mr{Mult} \, \cH (k) ; \quad \quad k (z,w) = (1-zw^*) ^{-1}, $$ where recall that the multiplier algebra, $\mr{Mult} \, \cH (k) $, of an RKHS, $\cH (k)$, is the unital $WOT-$closed (weak operator topology closed) algebra of all functions $h$ which multiply $\cH (k)$ into itself. These definitions have natural non-commutative multi-variable extensions.

\subsection{ $\ell ^2$ of the free monoid as the free Hardy space}

 From the square-summable Taylor series definition, is clear that the shift, $S = M_z$, is an isometry on $H^2 (\D )$.  The map $f \mapsto (\hat{f} _n ) $ of a holomorphic $f \in H^2 (\D )$ to its sequence of MacLaurin series coefficients defines a unitary map from $H^2 (\D )$ onto $\ell ^2 (\N _0 )$, the square-summable sequences indexed by the non-negative integers, and the image of the shift under this unitary transformation is $\hat{S}$, the unilateral shift on $\ell ^2 (\N _0)$, the \emph{universal pure isometry}. Here, recall that the Wold decomposition shows that any isometry on Hilbert space is unitarily equivalent to several copies of the unilateral shift, with possibly a remainder unitary direct summand.  A canonical multi-variable extension of $\ell ^2 (\N _0 )$ is then $\ell ^2 (\F ^d) $, the square-summable sequences indexed by $\F ^d$, the free and universal monoid on $d$ letters. This is the unital semi-group of all words in $d$ letters (which we choose to be $\{ 1, ... ,d \}$), with product given by concatenation of words, and unit equal to the empty word, $\emptyset$, containing no letters (and clearly $\N _0 \simeq \F ^1$). Given any word, $\alpha = i_1 \cdots i_n \in \F ^d$, $i_k \in \{ 1 , \cdots , d \}$, we write $|\alpha | := n$ for the length of the word $\alpha$, and $|\emptyset | =0$. The square-summable sequences $\ell ^2 (\N _0 )$ can be viewed as a directed graph or tree, starting from a single node (labelled by $0$, the unit of $\N _0$), and with one branch connecting each node to the next. The unilateral shift moves downward along these branches, taking the orthonormal basis vector of the $k^{\mr{th}}$ node to that of $k+1$. Similarly, $\ell ^2 (\F ^d)$ can be viewed as a tree starting from a single node labelled by the unit, $\emptyset$, and with $d$ branches directed downward from each node.  In this multi-variable setting there is now a natural $d-$tuple of \emph{left free shifts} shifting along branches from a given node to $d$ distinct nodes at the next level. Namely, if $e _\alpha$ is the orthonormal basis vector labelled by the word $\alpha \in \F ^d$, then 
$L_k e _\alpha = e_{k\alpha}; \ 1\leq k \leq d$. It is not difficult to see that the left free shifts are isometries with pairwise orthogonal ranges, 
$$ L_k ^* L_j := \delta _{k,j} I _{\ell ^2 (\F ^d )}, $$ so that the $d-$tuple
$$ L := (L_1 , \cdots , L _d ) : \ell ^2 (\F ^d ) \otimes \C ^d \rightarrow \ell ^2 (\F ^d ), $$ 
defines a \emph{row isometry}, \emph{i.e.} an isometry from several copies of a Hilbert space into itself.  In fact, Popescu's extension of the Wold decomposition shows that the left free shift, $L$, is the universal pure row isometry (with $d$ components) \cite{Pop-dil}. 

The left free shifts do not commute, and it may appear, at first sight, that one loses the connection to analytic function theory (since one cannot represent $L_k$ as $M_{z_k}$ on a space of holomorphic functions on some domain in $\C ^d$). Most remarkably, this is not the case -- instead $\ell ^2 (\F ^d )$ can be identified with a space of holomorphic functions in several non-commuting (NC) variables, $Z = (Z_1, \cdots  ,Z_d)$, and the $L_k \simeq M^L _{Z_k}$ become left multiplication by these independent NC variables. Certainly any element of $\ell ^2 (\F ^d )$ can be viewed as a formal power series in $d$ non-commuting variables $\mf{z} := (\mf{z} _1 , \cdots , \mf{z} _d )$:
$$ f := \sum _{\alpha \in \F ^d} \hat{f} _\alpha e _{\alpha} \mapsto \sum \hat{f} _\alpha \mf{z} ^{\alpha} =: f (\mf{z} ), $$ where if $\alpha = i_1 i_2 \cdots i_n$, $i _k \in \{ 1, ... ,d  \}$, we use the standard notation $\mf{z} ^\alpha = \mf{z} _{i_1} \mf{z} _{i_2} \cdots \mf{z} _{i_d}$. This is simply a change in notation, however, if $Z := (Z_1 , \cdots , Z_d ) : \cH \otimes \C ^d \rightarrow \cH$ is any strict row contraction on a Hilbert space, $\cH$, \emph{i.e.}
$$ Z Z^* = Z_1 Z_1 ^* + \cdots + Z_d Z_d ^* < I _\cH, $$ then  
the Popescu-von Neumann inequality for free polynomials,
$$ \| p (Z) \| \leq \| p (L ) \|; \quad \quad p \in \C \{ \mf{z} _1 , \cdots , \mf{z} _d \}, $$ 
implies that the power series for $f$ converges absolutely in operator norm when evaluated at $Z$ (and uniformly in operator norm on the ball of all strict row contractions of norm at most $r$, for any fixed $0<r<1$), $$ \| f(Z) \| ^2 \leq \frac{\| f \| ^2 _{\ell ^2 (\F ^d )} }{1 - \| Z \| ^2   }. $$ (The above inequality is a further consequence of the fact that $\| p (L ) \| = \| p \| _{\ell ^2 (\F ^d )}$ for any homogeneous free polynomial.) This shows that the power series, $f$, can be viewed as a function in the non-commutative open unit row-ball, $\B ^d _\N$, defined in Equation \ref{NCball}. Here, $\B ^d _n = \left( \C ^{n\times n} \otimes \C ^{1\times d} \right) _1$, can be viewed as the set of all strict row contractions with $d$ components on $\C ^n$. Any such square-summable power series (in particular any free polynomial) has the following three basic properties:
\bn
    \item $f : \B ^d _n \rightarrow \C ^{n \times n}$, $f$ is \emph{graded},
    \item If $Z \in \B ^d _n , W \in \B ^d _m$ then 
    $$ f \bpm Z & 0 \\ 0 & W \epm  = \bpm f(Z) & 0 \\ 0 & f (W) \epm, \quad \quad \mbox{$f$ \emph{respects direct sums},}$$
    \item If $W = S ^{-1} Z S$ is jointly similar to $Z$, \emph{i.e.} $W_k = S^{-1} Z_k S$, $1\leq k \leq d$, then 
    $$ f (W) = S^{-1} f(Z) S, \quad \quad \mbox{$f$ \emph{respects similarities}.} $$
\en
\vspace{0.1cm}
In modern NC Function Theory \cite{KVV,Ag-Mc,Taylor,Taylor2,Voic,Voic2}, the above three properties are taken as the axioms defining a \emph{non-commutative function}, and $\B ^d _\N \subseteq \C ^d _{\N}$, is an example of an NC set (any subset of $\C ^d _{\N}$ which is closed under direct sums is an NC set). The interpretation of $\ell ^2 (\F ^d )$ as non-commutative power series was first developed by Popescu in \cite{Pop-freeholo,Pop-freeholo2,Pop-freeharm}, and this has been one of the inspirations of non-commutative function theory. This general philosophy of identifying certain abstract Hilbert spaces and operator spaces with concrete spaces of NC functions has been a fruitful viewpoint in non-commutative analysis \cite{Voic,Voic2,Tak-dual,VinPop,SSS}.

Remarkably, any locally bounded NC function (on say a left and right admissable NC domain, see \cite{KVV}) is automatically holomorphic in the sense that it is both G\^{a}teaux and Fr\'{e}chet differentiable at any point in its domain and has a convergent Taylor-type power series expansion about any point \cite[Chapter 7]{KVV}. In particular, any $f \in \ell ^2 (\F ^d )$ defines a holomorphic NC function in $\B ^d _\N$. Much of classical complex analysis and several complex variables extends naturally to the setting of NC holomorphic functions with purely algebraic proofs including the Scwharz Lemma, Cauchy's estimates, Liouville's theorem (and much more) \cite{Pop-freeholo,Pop-freeholo2}, Hilbert's Nullstellensatz (this is in some sense `perfect' in the NC setting) and a maximum modulus principle \cite{SSS}, Oka-Weil theorem \cite{Ag-Mc}, and the concept of a complex analytic manifold \cite{AgMcY}.

It follows that we can identify $\ell ^2 (\F ^d)$ with the \emph{Non-commutative (NC)} or \emph{free Hardy space}:
$$ H^2 (\B ^d _\N ) := \left\{  f \in \scr{O} (\B ^d _\N )  \left| \ f(Z) = \sum _{\alpha \in \F ^d } \hat{f} _\alpha Z^\alpha, \ \sum |\hat{f} _\alpha | ^2 < \infty \right.  \right\}, $$ the Hilbert space of all (locally bounded hence holomorphic) NC functions in the NC unit ball $\B ^d _\N$ with square-summable MacLaurin series coefficients.  Under this identification the left free shifts, $L_k$, become left multiplication by the independent variables, $L_k = M^L _{Z_k}$. Note that $\ell ^2 (\F ^d )$ is also canonically isomorphic to the full Fock space over $\C ^d$:
$$ \hardy  :=  \C \oplus \C ^d \oplus \left( \C ^d \otimes \C ^d \right) \oplus \left( \C ^d \otimes \C ^d \otimes \C^d \right) \oplus \cdots = \bigoplus _{k=0} ^\infty \left( \C ^d \right) ^{k \cdot \otimes}, $$  the direct sum of all tensor powers of $\C ^d$. This isomorphism is implemented by the unitary map $e _\alpha \mapsto L^\alpha 1$, where $1$ denotes the vacuum vector (which spans the subspace $\C \subset \hardy$) of the Fock space.
Under this isomorphism the left free shifts become the left creation operators which act as tensoring on the left by the members of the standard orthornormal basis of $\C ^d$. In the sequel we identify $\ell ^2 (\F ^d ), \hardy$ and $H^2 (\B ^d _\N )$ and we will use these notations interchangeably.

\subsection{Fock space as a Non-commutative RKHS} \label{rkhs}

As in the single-variable setting, the free Hardy space $H^2 (\B ^d _\N )$ can be equivalently defined using (non-commutative) reproducing kernel theory \cite{BMV}. The theory of Non-commutative reproducing kernel Hilbert spaces (NC-RKHS) is a faithful analogue of the classical theory \cite{Aron-RKHS,Paulsen-rkhs}. Here, recall that a reproducing kernel Hilbert space is a Hilbert space of (complex-valued) functions, $\cH$, on a set $X$, so that point evaluation at any $x \in X$ is a bounded linear functional: for any $f \in \cH$ and $x \in X$,
$$ \ell _x (f ) := f(x); \quad \quad \ell _x \in \cH ^\dag. $$ 
The Riesz representation lemma then implies there is a corresponding \emph{kernel vector} $k_x \in \cH$ so that 
$$ \ip{k_x}{f}_\cH = \ell _x (f) = f(x), $$ and one typically writes $\cH =: \cH (k)$ where the \emph{reproducing kernel} $k: X \times X \rightarrow \C$ is defined as:
$$ k(x,y) := \ip{k_x}{k_y}_\cH; \quad \quad x,y \in X. $$
Any reproducing kernel is an example of a \emph{positive kernel function} on $X$: A function $k:X \times X \rightarrow \C$ is a positive kernel function if given any finite subset $\{ x_1, \cdots , x_N \} \subset X$, the corresponding Gram matrix of the $k_{x_j}$ is positive semi-definite:
$$ 0 \leq [ k (x_i , x_j ) ] _{1\leq i,j \leq N}. $$ 
Conversely, starting with any positive kernel function, $k$, on $X$, there is a unique RKHS of functions on $X$ with reproducing kernel equal to $k$. (One simply defines functions $k_y (x):= k(x,y)$ for $x,y \in X$, and then takes the Hilbert space completion with respect to the inner product defined by $\ip{k_x}{k_y} := k (x,y)$.)

The concept of a non-commutative reproducing kernel Hilbert space (NC-RKHS) is analogous: Let $\Om := \bigsqcup \Om _n$, 
$$ \Om _n = \Om \bigcap \C ^{n\times n} \otimes \C ^{1\times d},$$ be any NC subset of the NC universe $\C ^d _{\N}$. (Recall an NC set is any subset of $\C ^d _\N$ which is closed under direct sums.) A Hilbert space of non-commutative functions, $\cH$ on $\Om$ (taking values in, say, $\C _{\N} = \bigsqcup \C ^{n\times n}$) is an NC-RKHS if point evaluation at any point $Z \in \Om _n$ is a bounded linear map from $\cH$ into the Hilbert space
$\C ^{n\times n}$ (equipped with the normalized trace or Hilbert-Schmidt inner product). Denote this evaluation map by $\ell _Z: \cH \rightarrow \C ^{n \times n}$ for $Z \in \Om _n$, and let $K_Z : = \ell _Z ^* : \C ^{n\times n} \rightarrow \cH$ be its Hilbert space adjoint. For $y, v \in \C ^n$ we can then define
$$ K \{ Z , y , v \} := K_Z (yv^*) \in \cH. $$ Furthermore, given $Z \in \Om _n, y, v \in \C ^n$ and $W \in \Om  _m,  x, u \in \C ^m$ define the linear map
$$ K(Z,W) [ \cdot ] : \C ^{n\times m } \rightarrow \C ^{n\times m}, $$ by 
$$ \ipcn{y}{K(Z,W)[vu^*]x} := \ip{K \{Z , y, v \} }{ K \{ W , x, u \} } _{\cH}. $$
This defines a completely bounded linear map $K(Z,W) : \C ^{n\times m} \rightarrow \C ^{n\times m}$ so that $K(Z,Z) : \C ^{n\times n} \rightarrow \C ^{n\times n}$ is completely positive for any fixed $Z \in \Om _n$. The map $K(Z,W)$ is called the \emph{completely positive non-commutative reproducing kernel} (CPNC kernel) for the space $\cH$. The CPNC kernel is a sort of two-argument NC function, see \cite[Sections 2.3-2.4]{BMV} for details. As in the classical theory there is a bijection between CPNC kernel functions on a given NC set and NC-RKHS on that set \cite[Theorem 3.1]{BMV}, and if $K$ is a given CPNC kernel on an NC set, we will use the notation $\cH _{nc} (K)$ for the corresponding NC-RKHS of NC functions. In particular, the free Hardy space is the unique NC-RKHS corresponding to the \emph{NC Szeg\"{o} kernel}:
\be K(Z,W) [ \cdot ] := \sum _{\alpha \in \F ^d} Z^\alpha [ \cdot ]  W^{\alpha *}; \quad H^2 (\B ^d _\N ) = \cH _{nc} (K). \label{NCSzego} \ee 

All NC-RKHS in this paper will be NC-RKHS of free holomorphic functions in the NC unit ball $\B ^d _\N$ so that if $f \in \cH _{nc} (K)$, $f$ has a Taylor-MacLaurin series at $0 \in \B ^d _1$ with non-zero radius of convergence, \cite[Chapter 7]{KVV}: 
$$ f(Z) = \sum _{\alpha \in \F ^d } Z^\alpha \hat{f} _\alpha ; \quad \quad Z \in \B ^d _n, \ \hat{f} _\alpha \in \C, $$ 
and the linear coefficient evaluation fuctionals: 
$$  f \stackrel{\ell _\alpha}{\rightarrow} \hat{f} _\alpha; \quad \quad \alpha \in \F ^d, $$ are all bounded. We will let $K_\alpha $ denote the \emph{coefficient evaluation vector}:
$$ \ip{K_\alpha}{f}_{\cH _{nc} (K)} = \ell _\alpha (f) = \hat{f} _\alpha, \quad \quad \alpha \in \F ^d, $$ and we will typically write $\ell _\alpha =: K_\alpha ^*$. If $K$ is the NC Szeg\"{o} kernel of the free Hardy space, then
$$ K_\alpha (Z) = Z^\alpha, $$ \emph{i.e.} $K_\alpha$ can be identified with the free monomial $L^\alpha 1 \in \hardy$.

Recall that any reproducing kernel Hilbert space (RKHS) $\cH (k)$ on a set $X$ is naturally equipped with a \emph{multiplier algebra} $\mr{Mult} \, \cH (k) $, the algebra of all functions on $X$ which `multiply' $\cH (k)$ into itself: 
$$ h \in \cH (k), F \in \mr{Mult} \, \cH (k)  \ \Rightarrow \ F h \in \cH (k). $$ Any multiplier $F \in \mr{Mult} \, \cH (k) $ can be identified with a bounded linear multiplication operator $M_F \in \scr{L} (\cH (k) )$, and under this identification $\mr{Mult} \, \cH (k) $ is closed in the weak operator topology and unital. One can similarly define left and right multiplier algebras in the NC setting. Namely, if $\mc{H} _{nc} (K)$ is an NC-RKHS on an NC set $\Om$, then NC functions $F, G$ are left or right multipliers of $\cH _{nc} (K)$, respectively, if the NC functions
$$ (F \cdot h ) (Z) = F(Z) h (Z), \quad \mbox{or} \quad (h \cdot G) (Z) = h(Z) G(Z), $$ belong to $\cH _{nc} (K)$, for every $h \in \cH _{nc} (K)$. As in the classical theory, the adjoints of both left and right free multipliers have a natural action on point and coefficient evaluation vectors:
$$ (M^L _{F}) ^* K \{ Z , y , v \} = K \{ Z , F(Z)^* y , v \}, \quad \quad (M^R _G) ^* K \{Z , y, v \} = K \{ Z , y , G(Z) v \}, $$ and if $F(Z) = \sum Z^\alpha F_\alpha$, $G(Z) = \sum Z^\alpha G_\alpha$ then,
$$ (M^L _{F}) ^* K _\alpha = \sum _{\beta \ga = \alpha} K_\ga F_\beta ^*, \quad \quad (M^R _G) ^* K_\alpha = \sum _{\ga \beta = \alpha} K_\ga G_\beta ^*. $$ 
The left multiplier algebra of the free Hardy space provides a non-commutative generalization of $H^\infty (\D) = \mr{Mult} \, H^2 (\D )  $:
$$ H^\infty (\B ^d _\N ) :=  \left\{ f \in \scr{O} (\B ^d _\N )  \left| \ \sup _{Z \in \B ^d _\N } \| f (Z ) \| < \infty \right. \right\} = \mr{Mult} _L \, H^2 (\B ^d _\N ) . $$ As in the single variable setting, the left multiplier norm on $H^\infty (\B ^d _\N )$ (the norm of a left multiplier viewed as a left multiplication operator) coincides with the supremum norm in the NC unit ball \cite[Theorem 3.1]{SSS}, \cite[Theorem 3.1]{Pop-freeholo}. In keeping with the notation $\bH ^2 _d$ for the Fock space, we will often use the more compact notation $\mult = H^\infty (\B ^d _\N )$. This left multiplier algebra can also be identified with 
$$ L^\infty _d := \mr{Alg} \{ I, L_1, \cdots , L_d \} ^{-weak-*},$$ the (left) \emph{free analytic Toeplitz algebra}.  Let $$\A := \mr{Alg} \{ I , L_1, \cdots, L_d \} ^{-\| \cdot \| }, $$ be the left \emph{free disk algebra}. The free disk algebra can be viewed as the set of all uniformly bounded NC holomorphic functions in $\B ^d _\N$ which extend continuously to the boundary, $\partial \B ^d _\N$ (of all row contractions with unit norm). Recall that one can also define $R_k = M^R _{Z_k}$, the isometric \emph{right free shifts} on $H^2 (\B ^d _\N )$, and these are unitarily equivalent to the left free shifts via the self-adjoint transpose unitary on $\ell ^2 (\F ^d )$, $U_{\mr{t}}$,
$$ U_\mr{t} e_\alpha := e_{\alpha ^{\mr{t}}}, $$ where if $\alpha = i_1 \cdots i_n \in \F ^d$, then $ \alpha ^{\mr{t}} := i_n \cdots i_1, $ its transpose. We also define $R^\infty _d := U_{\mr{t}} L^\infty _d  U_{\mr{t}}$, the right free analytic Toeplitz algebra. Note that if $F(L) = M^L _{F(Z)} \in L^\infty _d$ has Taylor-MacLaurin series:
\be F(Z) = \sum _{\alpha \in \F ^d} F_\alpha Z^\alpha, \label{TMseries} \ee then its transpose-conjugate, $F^{\mr{t}} \in (\mult ) ^{\mr{t}}$ is a locally bounded NC function,
\be F^{\mr{t}} (Z) := \sum F_{\alpha ^{\mr{t}}} Z^\alpha, \label{transcon} \ee so that
$$ F(R) := U_{\mr{t}} F(L) U_{\mr{t}} = M^R _{F^{\mr{t}} } \in R^\infty _d. $$

As in \cite{DP-inv,Pop-entropy} a left (or right) free multiplier of the free Hardy space will be called \emph{inner} if the corresponding multiplication operator is an isometry, and \emph{outer} if the corresponding (left or right) multipication operator has dense range. 

\section{Non-commutative measures} \label{NCmeasuresect}

As described in the introduction, any finite, positive, and regular Borel measure, $\mu$, on $\partial \D$ is the Clark measure, $\mu = \mu _b$ corresponding to a contractive analytic function $b$ in $\D$. As before, we can identify $\mu$, via its moments, with a positive linear functional, $\hat{\mu}$, on the disk algebra operator system $\mc{A} (\D ) + \mc{A} (\D ) ^*$. (We write this in place of its norm-closure, which is simply the $C^*-$algebra $\mc{C} (\partial \D )$ of continuous functions on $\partial \D$ in this case.) 
If $b $ is a contractive analytic function, then $1-b$ is outer (cyclic for the shift), and it follows that 
$H_b (S) := (I + b(S) ) (I - b(S)) ^{-1}$ is a closed operator affiliated to $S$ with dense 
domain $\ran{I-b(S)}$. Moreover, it is not difficult to verify the formula:
\ba \hat{\mu} _b (S^k ) & = & \frac{1}{2} \left( \ip{H_b (S) ^*1}{S^k 1}_{H^2} + \ip{1}{H_b (S) ^* S^k 1} _{H^2} \right) \nn \\
& = & \delta _{k,0} \frac{1}{2} H_b (0) + \frac{1}{2}\ip{1}{H_b (S) ^* S^k 1} _{H^2}. \nn \ea
One can further re-write the Herglotz representation formula in terms of this Clark functional:
$$ H_b (z) = i \nbim H_b (0) + \hat{\mu} _b \left( (I + z S^*) (I-zS^* ) ^{-1} \right), $$ and the (conjugate) moments of $\mu$ are in fixed proportion to the MacLaurin series coefficients of $H_b$:
$$ H_b (z) = i \nbim H_b (0) + \hat{\mu} _b (I ) + 2 \sum _{k=1} ^\infty z^k \hat{\mu} _b (S^k ) ^*.$$

These constructions have exact analogues in the NC multi-variable setting. In place of $\mc {A} (\D ) = \mr{Alg} \{ I , S \} ^{-\| \cdot \|}$ we have the free disk algebra, $\A =\mr{Alg} \{ I, L_1 , \cdots , L_d \} ^{-\| \cdot \| }$. The results in this section can be found in \cite{JM-freeCE,JM-freeAC}.

\begin{defn} \label{NCmeasdef}
A (positive) \emph{non-commuative measure} is a positive linear functional on the \emph{free disk system}:
$$  \scr{A} _d := \left( \A + \A ^* \right) ^{-\| \cdot \|}. $$ The set of all positive NC measures will be denoted by $\posncm$. 
\end{defn}

\begin{defn}{ (\cite[Definition 3.1]{JM-freeCE})} \label{NCClarkdef}
Given any contractive free holomorphic $B \in \scr{L} _d := [ H^\infty (\B ^d _\N ) ] _1$, the \emph{Clark functional} or \emph{NC Clark measure} of $B$ is the positive linear functional, $\mu _B \in \posncm$, 
defined by:
\ba  \mu _B (L^\alpha ) & := & \frac{1}{2} \left( \ip{H_B (R) ^* 1 }{L^\alpha 1 } _{\hardy} + \ip{1}{H_B (R) ^* L^\alpha 1 } _{\hardy} \right) \nn \\
& = & \frac{1}{2} H_{B ; \emptyset} \cdot \delta _{\alpha , \emptyset} + \frac{1}{2} \ip{1}{H_B (R) ^* L^\alpha 1 } _{\hardy}. \nn \ea
The convex set, $\scr{L} _d := [\mult ]_1$, of all contractive NC  functions in $\B ^d _\N$ is called the \emph{left NC Schur class} or \emph{left free Schur class} \cite[Section 3]{Ball-Fock}. Similarly, $\scr{R} _d = \scr{L} _d ^{\mr{t}}$ will denote the \emph{right NC Schur class}, the set of all transpose-conjugates of elements of $\scr{L} _d$.
\end{defn}
It is not immediately obvious that the above definition of $\mu _B$ produces a positive NC measure or positive linear functional on the free disk system, $\scr{A} _d$. This is proven in \cite[Proposition 3.2]{JM-freeCE} and \cite[Proposition 4.5]{JM-freeAC}. Classically, the closed unit ball of $H^\infty (\D )$, \emph{i.e.} the closed convex set of all contractive analytic functions in the disk, is called the \emph{Schur class} \cite{Ball-Fock}. This motivates our terminology above.
\begin{remark} \label{leftvsright}
The left and right NC Schur classes are distinct. A simple example is given by the NC polynomial:
$$ B(Z) := \frac{1}{\sqrt{2}} Z_2 (I_n - Z_1); \quad Z \in \B ^d _n. $$ This is inner as a right multiplier, \emph{i.e.} $M^R _{B(Z)}$ is an isometry on $\hardy$, and hence has operator norm $1$. However, as a left multiplier, $M^L _{B(Z)}$ has norm $\sqrt{2} > 1$, see \cite[Example 3.4]{JM-ncld} for details.
\end{remark}
\begin{defn}
    A free holomorphic function, $H$ in $\B _\N ^d$ is a left free or NC Herglotz function if $\nbre H(Z)$ is positive semi-definite for
all $Z \in \B _\N ^d$. The set of all left free Herglotz functions is a positive cone which we denote by $\scr{L} _d ^+$. 
\end{defn}
We will also consider the right NC Herglotz class, $\scr{R} _d ^+$, the image of $\scr{L} _d ^+$ under the transpose involution, $\scr{R} _d ^+ = ( \scr{L} _d ^+ ) ^\mr{t}$. As in the classical setting, the fractional linear Cayley Transform implements a bijection between $\scr{L} _d$ and $\scr{L} _d ^+$: Given $B \in \scr{L} _d$,
$$ H_B := (I-B) ^{-1} (I+B) \in \scr{L} _d ^+, $$ and given $H \in \scr{L} _d ^+$,
$$ B_H := (H+I) ^{-1} (H-I) \in \scr{L} _d. $$ Similarly the Cayley Transform maps $\scr{R} _d$ bijectively onto $\scr{R} _d ^+$, and since the left and right NC Schur classes are distinct (see Remark \ref{leftvsright} above), so are the left and right NC Herglotz classes. Given $Z \in \B ^d _n$, let 
$$ ZL^* := \bsm Z _1 \otimes I_{\bH^2}, & \cdots, & Z_d \otimes I_{\bH ^2} \esm \bsm I_n \otimes L_1 ^* \\ \vdots \\ I_n \otimes L_d ^* \esm = Z_1 \otimes L_1 ^* + \cdots + Z_d \otimes L_d ^* \in \scr{L} (\C ^n \otimes \hardy ), $$ and set $I_{n\times \bH ^2} := I_n \otimes I_{\bH ^2 }$. Also note that $ZL^*$ is a strict contraction so that
$$ (I -ZL^* ) ^{-1} = \sum _{k=0} ^\infty (ZL^* ) ^k = \sum _{\alpha \in \F ^d }  Z^\alpha \otimes L^{*\alpha}, $$ is a convergent geometric series. The following result extends the classical bijection between Herglotz functions in the disk and positive measures on the circle to our NC multivariable setting.
\begin{thm}{ (\cite[Theorem 3.4]{JM-freeCE}, \cite[Section 5]{Pop-freeharm})} \label{freeHergthm}
The map $B \mapsto \mu _B$ is a bijection, modulo the imaginary part of $H_B (0)$, from $\scr{L} _d$ onto $\posncm$, 
and one has the \emph{NC Herglotz formula}: 
\be H_B (Z) = i \nbim H_B (0_n) + (\mr{id} _n \otimes \mu _B ) \left( (I_{n \times \bH ^2} + ZL^* ) (I_{n \times \bH ^2} -ZL^* ) ^{-1} \right); \quad Z \in \B ^d _n. \label{freeHerglotz} \ee
\end{thm}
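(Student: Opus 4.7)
The plan is to verify the Herglotz formula by direct computation from Definition \ref{NCClarkdef} and then deduce the bijection from the moment correspondence it furnishes. The key preliminary step is to extract a closed-form expression for the moments $\mu_B(L^\alpha)$ in terms of the Taylor coefficients of $H_B$. Using the action formula $(M^R_G)^* K_\alpha = \sum_{\gamma\beta=\alpha} K_\gamma G_\beta^*$ from Subsection \ref{rkhs}, applied to $H_B(R) = M^R_{H_B^\mr{t}}$ (so that $G_\beta = \hat{H}_{B,\beta^\mr{t}}$), one finds $\langle 1, H_B(R)^* L^\alpha 1\rangle_{\bH^2_d} = \overline{\hat{H}_{B,\alpha^\mr{t}}}$ and $H_B(R)^* 1 = \overline{H_{B;\emptyset}}\cdot 1$. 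Substituting into Definition \ref{NCClarkdef} gives
$$ \mu_B(L^\alpha) = \tfrac{1}{2}H_{B;\emptyset}\delta_{\alpha,\emptyset} + \tfrac{1}{2}\overline{\hat{H}_{B,\alpha^\mr{t}}}, $$
so $\mu_B(I) = \mr{Re}\,H_{B;\emptyset}$ and $\mu_B(L^{*\alpha^\mr{t}}) = \tfrac{1}{2}\hat{H}_{B,\alpha}$ for $|\alpha|\geq 1$.

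Next I would expand the NC Cauchy-type kernel. Since $\|ZL^*\|^2 = \|(ZL^*)(LZ^*)\| = \|ZZ^*\| = \|Z\|_{row}^2 < 1$ for $Z\in\B^d_n$, the geometric series $(I-ZL^*)^{-1} = \sum_\alpha Z^\alpha \otimes L^{*\alpha^\mr{t}}$ converges absolutely in the operator norm of $\C^{n\times n}\otimes\scr{A}_d$, and so does the series $(I+ZL^*)(I-ZL^*)^{-1} = I + 2\sum_{|\alpha|\geq 1} Z^\alpha \otimes L^{*\alpha^\mr{t}}$. Applying the bounded functional $\mr{id}_n\otimes\mu_B$ term by term and substituting the moment formula yields
$$ (\mr{id}_n\otimes\mu_B)\bigl((I+ZL^*)(I-ZL^*)^{-1}\bigr) = \mr{Re}\,H_{B;\emptyset}\,I_n + \sum_{|\alpha|\geq 1} Z^\alpha \hat{H}_{B,\alpha} = H_B(Z) - i\,\mr{Im}\,H_B(0_n), $$
which is (\ref{freeHerglotz}).

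For the bijection, injectivity modulo $i\,\mr{Im}\,H_B(0)$ is immediate: if $\mu_B = \mu_{B'}$ then the moment formula forces $\hat{H}_{B,\alpha} = \hat{H}_{B',\alpha}$ for $|\alpha|\geq 1$ and $\mr{Re}\,H_{B;\emptyset} = \mr{Re}\,H_{B';\emptyset}$, so $H_B - H_{B'}$ is a purely imaginary constant. For surjectivity, given $\mu\in\posncm$ I would define $H_\mu(Z)$ by the right-hand side of (\ref{freeHerglotz}) and verify that it is a free Herglotz function; local boundedness comes from the series expansion, and positivity of $\mr{Re}\,H_\mu$ follows from the identity
$$ (I+ZL^*)(I-ZL^*)^{-1} + \bigl((I+ZL^*)(I-ZL^*)^{-1}\bigr)^* = 2(I-LZ^*)^{-1}(I - LZ^*ZL^*)(I-ZL^*)^{-1}, $$
combined with $\|LZ^*ZL^*\| \leq \|Z\|_{row}^2 < 1$ (so the middle factor is positive) and the complete positivity of $\mr{id}_n\otimes\mu$. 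The Cayley transform then produces $B_\mu := (H_\mu + I)^{-1}(H_\mu - I) \in \scr{L}_d$, and the forward computation in the first two paragraphs shows $\mu_{B_\mu} = \mu$.

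The main technical obstacle I anticipate is justifying the manipulations of the closed operator $H_B(R)$ on the vectors $L^\alpha 1$; this requires $I - B(R)$ to have dense range in $\bH^2_d$, i.e., the right-outer property of $1-B$ for $B \in \scr{L}_d$. This can either be cited from the free Hardy algebra theory in \cite{JM-freeCE, JM-freeAC}, or circumvented by running the argument first for the strict contraction $rB$ with $0<r<1$ (where $H_{rB}(R)$ is bounded) and then passing to the limit $r\uparrow 1$ using radial continuity of NC Herglotz functions.
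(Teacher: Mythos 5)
The paper offers no proof of this theorem to compare against: it is imported verbatim from \cite[Theorem 3.4]{JM-freeCE} and \cite[Section 5]{Pop-freeharm}, and the surrounding text explicitly delegates the nontrivial verifications to those sources. Judged on its own terms, your outline is essentially the argument of those sources, and the computations you sketch are sound: the coefficient/moment correspondence $\mu_B(L^\alpha)=\tfrac12 H_{B;\emptyset}\delta_{\alpha,\emptyset}+\tfrac12\overline{H_{B,\alpha^{\mrt}}}$ is exactly Equation (\ref{HergTTseries}); grouping the Neumann series by homogeneous degree gives $\|(ZL^*)^k\|\leq\|Z\|_{row}^k$, so the series converges in the norm of $\C^{n\times n}\otimes\scr{A}_d$ and the bounded functional $\mr{id}_n\otimes\mu_B$ may indeed be applied termwise; your algebraic identity for the real part, $2(I-LZ^*)^{-1}(I-LZ^*ZL^*)(I-ZL^*)^{-1}$, is correct and, combined with the automatic complete positivity of positive functionals on operator systems, gives $\nbre H_\mu(Z)\geq 0$; and the Cayley transform together with the identification of the supremum norm with the left multiplier norm closes the surjectivity argument. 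Your injectivity argument modulo $i\,\nbim H_B(0)$ is also fine.

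The one substantive item you leave unproved is the ``into'' half of the bijection: nothing in your proposal shows that, for an arbitrary $B\in\scr{L}_d$, the functional $\mu_B$ of Definition \ref{NCClarkdef} extends to a \emph{positive} (hence bounded) linear functional on $\scr{A}_d$, i.e.\ that the map really lands in $\posncm$; your surjectivity step only produces a preimage $B_\mu$ for each already given positive $\mu$. The paper itself flags this as non-obvious and cites \cite[Proposition 3.2]{JM-freeCE} and \cite[Proposition 4.5]{JM-freeAC} for it. Your radial fallback is in fact the standard way to supply it: for $0<r<1$ the operator $T_r:=\nbre H_B(rR)\geq 0$ is bounded, the functional $a_1^*a_2\mapsto\ip{\sqrt{T_r}\,a_1 1}{\sqrt{T_r}\,a_2 1}_{\hardy}$ is positive, agrees with the radial moments by the $L$-Toeplitz property of $T_r$, is uniformly bounded since its value at $I$ is $\nbre H_{B;\emptyset}$, and converges pointwise to $\mu_B$; norm-density of sums of squares in the positive cone of $\scr{A}_d$ (\cite[Lemma 4.6]{JM-freeAC}) then yields positivity in the limit. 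As written, though, your closing paragraph aims this machinery only at the domain issues for the unbounded operator $H_B(R)$, not at positivity, so you should either carry out that step explicitly or cite it, as the paper does.
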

In the above, $\mr{id} _n : \C ^{n\times n} \rightarrow \C ^{n\times n}$ denotes the identity map. Any NC measure $\mu \in \posncm$ is the NC Clark measure of some contractive NC holomorphic function $B \in \scr{L} _d$, $\mu = \mu _B$, and the moments of $\mu$ can be identified with the MacLaurin (Taylor-Taylor series at $0 \in \B ^d _1$) series coefficients of $H_B$: 
\be H_B (Z) = i \nbim H_{B} (0) I_n +  \mu (I)  I_n + 2 \sum _{\alpha \neq \emptyset } Z^\alpha \mu (L^{\alpha ^{\mr{t}} }) ^*, \label{HergTTseries} \ee
see \cite[Lemma 3.3]{JM-freeCE}. The left NC Herglotz-Riesz transform of $\mu \in \posncm$ is defined as
$$ H_\mu (Z) =  (\mr{id} _n \otimes \mu _B ) \left( (I_{n \times \bH ^2} + ZL^* ) (I_{n \times \bH ^2} -ZL^* ) ^{-1} \right); \quad Z \in \B ^d _n, $$ so that if $\mu = \mu _B$ for $B \in \scr{L} _d$ then $H_B = H_\mu + i \nbim H_B (0)$.

\subsection{Non-commutative Lebesgue measure} 
\label{NCLebesgue}
Classically, the Herglotz-Riesz transform, $H_m (z)$, of normalized Lebesgue measure, $m$ on $\partial \D$ is the constant function $H_m \equiv 1$:
\ba H_m (z) & := & \int _{\partial \D } \frac{1+z\zeta ^*}{1-z\zeta ^*} m (d\zeta ) \nn \\
& = & 2\sum _{k=0} ^\infty z^k \int _{\partial \D } (\zeta ^*) ^k m (d\zeta ) - \int _{\partial \D} m (d\zeta ) \nn \\
& = & 2 \sum _{k=0} ^\infty z^k \delta _{k,0} - 1 =1. \nn \ea
The corresponding contractive analytic function in $\D$ obtained as the inverse Cayley Transform of $H_m (z) \equiv 1$ is then identically $0$:
$$ b_m (z) := \frac{H_m (z) -1}{H_m (z) +1 } = 0. $$ 
It is then natural to expect that in the NC multi-variable theory, the role of normalized Lebesgue measure should be played by the unique positive NC measure corresponding to the constant free holomorphic functions: 
$$ B (Z) := 0 _n, \quad \ \mbox{or equivalently} \  \quad H_B (Z) := I_n; \quad \quad Z \in \B ^d _n. $$ 
Using the NC Herglotz representation formula (\ref{freeHerglotz}), it is easy to check that the unique NC measure (which we also denote by $m$), corresponding to the contractive NC function $B(Z) = 0_n $ is the vacuum state on the Fock space:
$$ m (L^\alpha ) := \ip{1}{L^\alpha 1} _{\bH ^2} = \delta _{\alpha, \emptyset}. $$ 

More evidence that this is to be expected, is that if $m$ is normalized Lebesgue measure on $\partial \D$, then the corresponding linear functional $\hat{m}$ restricted to $\mc{A} (\D )$ can be expressed as:
$$ \hat{m} (S^k ) = \ip{1}{S^k 1} _{H^2} = \delta _{k, 0}. $$

\begin{defn}
    The vacuum state $m \in \posncm$ will be called (normalized) \emph{NC Lebesgue measure}.
\end{defn}

\subsection{Left regular representations of the Cuntz-Toeplitz algebra}

If $\mu$ is any positive finite and regular Borel measure on $\partial \D$, it is natural to consider the $L^2$ space, $L^2 (\mu , \partial \D )$, as well as its `analytic part', 
$$ H^2 (\mu ) := \bigvee _{k\geq 0} \zeta ^k, $$ the closed linear span of the `analytic polynomials' in $L^2 (\mu , \partial \D )$. The operator $M _\zeta$, of multiplication by $\zeta $ is unitary on $L^2 (\mu)$, and $H^2 (\mu )$ is $M_\zeta -$invariant so that the restriction of $M_\zeta$ to $H^2 (\mu )$ is an isometry. 

When $d>1$, the appropriate analogues of $H^2 (\mu )$ and $M_{\zeta } | _{H^2 (\mu )}$ are obtained via a Gelfand-Naimark-Segal (GNS) construction:
If $\mu \in \posncm$, the fact that the free disk algebra has the semi-Dirichlet property \cite{DK-dilation}: $$ \A ^* \A \subseteq (\A + \A ^*) ^{-\| \cdot \| }, $$ ensures that the GNS pre-inner product:
$$ \ip{a_1}{a_2} _\mu := \mu (a_1 ^* a_2); \quad \quad a_1, a_2 \in \A $$ is well-defined. The GNS space $\hardy (\mu _B )$ is then the Hilbert space completion of $\A$ modulo zero length vectors with respect to this pre-inner product. The equivalance class of $a \in \A$ will be denoted by $a + N_\mu$, where $N_\mu \subseteq \A$ is the left ideal of all elements of zero length.  Moreover, the left regular representation: $\pi _\mu : \A  \rightarrow \scr{L} (\hardy (\mu  ) )$, 
$$ \pi _\mu (a_1) (a_2 + N_\mu) := a_1 a_2 + N _\mu; \quad \quad a_1, a_2 \in \A, $$ is completely isometric and extends to a $*-$representation of the Cuntz-Toeplitz algebra $\mc{E} _d = C^* (I, L)$ on $\scr{L} (\hardy (\mu ) )$ \cite{Cuntz}. In particular, 
$$ \Pi _\mu = \pi _\mu (L) := (\pi _\mu (L_1) , \cdots , \pi _\mu (L_d) ) : \hardy (\mu ) \otimes \C ^d \rightarrow \hardy (\mu ), $$ is a (row) isometry, and we write $\Pi _{\mu ; k} := \pi _\mu (L_k).$ Again, if $d=1$ then 
$$ \bH ^2 _1 (\hat{\mu } ) \simeq H^2 (\mu ), \quad \mbox{and} \quad \Pi _{\hat{\mu}} \simeq M_\zeta | _{H^2 (\mu)}, $$ where $\hat{\mu}$ is, as before, the positive linear functional corresponding to the positive measure, $\mu$ on the circle $\partial \D$.

\subsection{The (right) NC Herglotz class} 
\label{NCHergspace}

For our purposes, it will be convenient to consider the right free Herglotz class $\scr{R} _d ^+ = (\scr{L} _d ^+ ) ^{\mr{t}}$, the image of the left free Herglotz class under the involutive transpose map. Namely if $H \in \scr{L} _d ^+$ has Taylor-MacLaurin series:
$$ H(Z) = \sum _{\alpha \in \F ^d} Z^\alpha H_\alpha =  \mu (I) I_n + 2 \sum _{\alpha \neq \emptyset } Z^\alpha \mu (L^{\alpha ^{\mr{t}} }) ^*, $$  then $H^{\mr{t}} \in \scr{R} _d ^+$ has Taylor-MacLaurin series:
$$ H^{\mr{t}} (Z) = \sum _\alpha Z^\alpha H_{\alpha ^{\mr{t}}} = \mu (I) I_n + 2 \sum_{\alpha \neq \emptyset} Z^\alpha \mu (L^\alpha ) ^*, $$ see \cite[Section 3]{JM-freeCE}. As in \cite{JM-freeCE,JM-freeSmirnov,JM-F2Smirnov}, we can identify $\scr{R} _d ^+$ as closed (potentially unbounded) right multiplication operators densely-defined in the full Fock or NC Hardy space: if $H  \in \scr{R} _d ^+$, then $M^R _{H (Z)} = H^\mr{t} (R)$, where $H ^\mr{t} (L) = M^L _{H ^\mr{t} (Z)}$ and $H^\mr{t} \in \scr{L} _d ^+$. Given $H  \in \scr{R} _d ^+$, one can construct the (right) \emph{free Herglotz space}, $\scr{H} ^+ (H ) := \cH _{nc} (K ^H )$, the unique NC-RKHS corresponding to the (right) \emph{free Herglotz kernel}: 
$$ K^H (Z,W) := \frac{1}{2} K(Z,W) \left[H (Z) (\cdot)  +  (\cdot) H (W) ^* \right]; \quad \quad Z,W \in \B ^d _\N, $$ see \cite[Section 4]{JM-freeCE},\cite{JM-freeAC}. Here $K(Z,W)$ denotes the CPNC Szeg\"{o} kernel in the NC unit ball $\B ^d _\N$, see Equation \ref{NCSzego}, and $K^H$ is also a CPNC kernel in $\B ^d _\N$. 
\begin{lemma}
A locally bounded NC function, $H$, in $\B ^d _\N$ belongs to the right free Herglotz class if and only if $K^H$ is a CPNC kernel.
\end{lemma}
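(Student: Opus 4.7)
The plan is to handle the two directions of the iff separately. The forward direction is largely a direct construction once the NC Herglotz-Riesz representation is invoked; the backward direction is more subtle and requires additional NC operator-theoretic machinery.

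For the forward direction, suppose $H \in \scr{R}_d^+$, so that $H^{\mr{t}} \in \scr{L}_d^+$ by definition. Theorem \ref{freeHergthm} produces a positive NC measure $\mu \in \posncm$ with $H^{\mr{t}} = H_\mu + i\nbim H^{\mr{t}}(0)$, and transposing gives the analogous representation of $H$ itself using the right shifts $R$ in place of $L$. The natural construction is the NC Cauchy transform: for $f$ in the GNS Hilbert space $\bH^2_d(\mu)$, define the NC function
\begin{equation*}
(C_\mu f)(Z) := (\mr{id}_n \otimes \omega_f)\bigl[(I_{n \times \bH^2} - Z R^*)^{-1}\bigr], \qquad Z \in \B^d_n,
\end{equation*}
where $\omega_f(a) := \ip{1_\mu}{\pi_\mu(a) f}_\mu$ and $1_\mu$ is the cyclic GNS vector. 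I would verify that $C_\mu$ is isometric, so that its range $\scr{H}^+(H) := C_\mu(\bH^2_d(\mu))$ is a Hilbert space of NC functions on $\B^d_\N$, and then compute directly that the reproducing kernel of this NC-RKHS coincides with the formula for $K^H$ given by the Herglotz formula \eqref{freeHerglotz}. Exhibiting any NC-RKHS whose reproducing kernel is $K^H$ suffices to conclude that $K^H$ is a CPNC kernel.

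For the backward direction, assume $K^H$ is CPNC, so the NC-RKHS $\cH_{nc}(K^H)$ exists with $K^H$ as reproducing kernel. The goal is to produce a contractive right multiplier $B \in \scr{R}_d$ whose right Cayley transform recovers $H$ (up to the constant $i\nbim H(0)$); by the Cayley bijection between $\scr{R}_d$ and $\scr{R}_d^+$, this places $H$ in the right Herglotz class. I would manufacture $B$ via a Sz.-Nagy-Foias-style characteristic function construction: identify a natural row contractive $d$-tuple acting on $\cH_{nc}(K^H)$ coming from the point-evaluation adjoint of multiplication by the NC coordinate variables, and extract $B$ as its transfer function.

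The main obstacle is the backward direction. The naive single-point test, taking $Z = W$ in $K^H$ and applying the CP condition to the identity, yields only $K(Z, Z)[\nbre H(Z)] \geq 0$, which is strictly weaker than $\nbre H(Z) \geq 0$ since the CP map $K(Z, Z)$ does not reflect positivity (its inverse $Y \mapsto Y - \sum_j Z_j Y Z_j^*$ is generally not CP, and one can easily exhibit non-PSD $Y$ with $K(Z,Z)[Y] \geq 0$). Extracting pointwise positivity of $\nbre H$ therefore requires the full CPNC positivity condition across multiple NC points, most cleanly through the functional-model approach on $\cH_{nc}(K^H)$. The forward direction, by contrast, is essentially a bookkeeping exercise once Herglotz-Riesz is applied.
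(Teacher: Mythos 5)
The genuine gap is in your backward direction: ``manufacture $B$ via a Sz.-Nagy--Foias-style characteristic function construction \ldots and extract $B$ as its transfer function'' is a statement of intent, not an argument. You never specify how a row contraction on $\cH _{nc} (K^H)$ is to be defined from $K^H$ alone, why it is well defined and contractive (this is precisely where the multi-point CPNC hypothesis must be consumed), why the resulting transfer function is a contractive right multiplier, or why its Cayley transform recovers $H$ up to the constant $i \nbim H(0)$; carrying all of this out would essentially amount to re-proving the NC Herglotz representation theorem. The idea you are missing is that $B$ never needs to be recovered from a model: define $B(Z) := (H(Z) - I)(H(Z) + I)^{-1}$ directly as the pointwise Cayley transform of $H$, and use the algebraic identity $K^B (Z,W)[\cdot] = K^H (Z,W)[(I - B(Z))(\cdot)(I - B(W)^*)]$, equivalently $K^H (Z,W)[\cdot] = K^B (Z,W)[(I - B(Z))^{-1} (\cdot)(I - B(W)^*)^{-1}]$, where $K^B$ is the NC de Branges--Rovnyak kernel $K(Z,W)[\cdot] - K(Z,W)[B(Z)(\cdot)B(W)^*]$. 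Conjugation by the invertible factors $(I - B(Z))$ preserves complete positivity, so $K^H$ is CPNC if and only if $K^B$ is; $K^B$ is CPNC if and only if $I - M^R _B (M^R _B)^* \geq 0$, tested on the dense span of the NC Szeg\"{o} kernel vectors, i.e.\ if and only if $B \in \scr{R} _d$; and the operator Cayley-transform correspondence between accretive operators and contractions without eigenvalue $1$ converts $B \in \scr{R} _d$ back into $\nbre H^{\mr{t}}(Z) \geq 0$, i.e.\ $H \in \scr{R} _d ^+$. This single chain of equivalences is the paper's entire proof and handles both directions at once; your correct observation that the single-point test only yields $K(Z,Z)[\nbre H(Z)] \geq 0$ is thereby circumvented rather than answered---the multi-point positivity is absorbed by the kernel criterion for contractivity of $M^R _B$, never by trying to invert $K(Z,Z)$.

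Your forward direction is correct in outline but is a different and heavier route than the paper's: you invoke Theorem \ref{freeHergthm} to produce $\mu \in \posncm$ and then realize the Herglotz space as the isometric image of $\bH ^2 _d (\mu)$ under the NC Cauchy transform, identifying its reproducing kernel with $K^H$; this is essentially the construction of $\scr{H} ^+ (H_\mu)$ carried out in \cite{JM-freeCE}, and exhibiting any NC-RKHS with kernel $K^H$ does indeed prove CPNC-ness. But you would still owe the isometry verification and the kernel computation, and your displayed formula has a transpose bookkeeping issue (it mixes $\pi _\mu$, a representation of the algebra generated by $L$, with $R^*$ inside the resolvent), whereas in the paper the forward implication falls out of the same Cayley/de Branges--Rovnyak argument with no measure and no Cauchy transform at all.
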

There is an analogous kernel characterization of the left NC Herglotz class, see \cite[Section 3]{JM-freeCE}.
\begin{proof}
By definition $H$ is right Herglotz if and only if $H^\mr{t}$ is left Herglotz, \emph{i.e.} if and only if $\nbre H^\mr{t} (Z) \geq 0$ for all $Z \in \B ^d _\N$. By \cite[Chapter III.8]{NF}, the Cayley Transform implements a bijection between closed, accretive operators (operators with numerical range in the right half-plane) which are densely-defined in a Hilbert space and contractions which do not have $1$ as an eigenvalue. In particular, $H^\mr{t}$ is a left NC Herglotz function if and only if $B ^\mr{t} := (H^\mr{t} -1) (H^\mr{t} +1 ) ^{-1} \in \scr{L} _d$ is a contractive NC function in $\B ^d _\N$. (By the NC maximum modulus principle, \cite[Lemma 6.11]{SSS}, any non-constant $B^\mr{t} \in \scr{L} _d$ must take strictly contractive values in $\B ^d _\N$.) Setting $B = (B^\mr{t} ) ^\mr{t}$, we conclude that $H \in \scr{R} _d ^+$ belongs to the right NC Herglotz class if and only if $M^R _B = B^\mr{t} (R) = U_\mr{t} B ^\mr{t} (L) U_\mr{t}$ is a contractive right multiplier so that $B \in \scr{R} _d$ belongs to the right Schur class.

It is well-known that a multiplier, $b$, of a reproducing kernel Hilbert space, $\mc{H} (k)$, is contractive, if and only if the \emph{de Branges-Rovnyak kernel}, $k^b (z,w) := k(z,w) - b(z) k(z,w) b(w) ^*$ is a positive kernel function. It is straightforward to verify this in the NC setting: A right multiplier, $B \in (\bH ^\infty _d ) ^\mr{t}$, of $\bH ^2 _d$ is contractive if and only if $I_{\bH ^2} - M^R _B (M^R _B) ^* \geq 0$, so that $B \in \scr{R} _d$ if and only if for any $(Z,y ,v ) \in \B ^d _n \times \C ^n \times \C ^n$ and any $n \in \N$,
\ba
0 & \leq &  \ip{K\{Z, y ,v \}}{(I - M^R _B (M^R _B) ^*) K\{ Z , y ,v \}}_{\bH ^2} \nn \\
& = & \ip{K\{Z, y, v \}}{K\{ Z , y , v \}}_{\bH ^2} - \ip{K\{Z , y , B(Z) v \}}{K\{ Z , y , B(Z) v \} }_{\bH ^2} \\
& = & y^* K (Z,Z) [vv^*] y - y^* K(Z,Z) [B(Z) vv ^* B(Z) ^*] y. \nn \ea 
Clearly if $B$ is a contractive right multiplier the above expression is positive semi-definite. Conversely, linear combinations of NC Szeg\"o kernels are NC Szeg\"o kernels: $K\{ Z , y ,v \} +c K\{ W , x , u \} = K\{ Z \oplus W , y \oplus cx , v \oplus u \}$, and the linear span of the NC Szeg\"o kernels is dense. Hence, the above expression is positive semi-definite for all $(Z,y,v)$ if and only if $B \in \scr{R} _d$.

It follows that $B$ is a right NC Schur function if and only if the linear map $K^B (Z,Z) [\cdot ] : \C ^{n\times n} \rightarrow \C ^{n\times n}$ defined by
$$ K^B (Z,Z) [\cdot ] := K (Z,Z) [ \cdot ] - K(Z,Z)[ B(Z) (\cdot ) B(Z) ^*], $$ is positive semi-definite for any $Z \in \B ^d _n$ and $n \in \N$. By \cite[Subsection 2.4]{BMV}, this is equivalent to $K^B (Z,Z) [\cdot ]$ being completely positive for any $Z \in \B ^d _\N$, and hence to $K ^B (Z,W)$ being a CPNC kernel. It is clear that $B(Z) = (H(Z) - I) (H(Z) +I ) ^{-1}$ is the Cayley Transform of $H$, and a bit of algebra verifies that 
$$ K^B (Z,W) [\cdot ] = K^H (Z,W) [ (I - B(Z) ) (\cdot ) (I - B(W) ^*)], $$ or equivalently, $K^H (Z,W) [\cdot ] = K^B (Z,W) [ (I-B(Z) ) ^{-1} (\cdot) (I- B(W) ^* ) ^{-1} ]$. It follows easily from these equations that the NC de Branges-Rovnyak kernel $K^B$ is a CPNC kernel if and only if $K^H$ is a CPNC kernel. 
\end{proof}
Given $H \in \scr{R} _d ^+$, there is then a corresponding NC-RKHS, $\cH _{nc} (K^H)$, of NC holomorphic (\emph{i.e.} locally bounded) functions in $\B ^d _\N$. If $\mu \in \posncm$ is the unique NC measure corresponding to the right NC Herglotz function $H \in \scr{R} _d ^+$ by Theorem \ref{freeHergthm}, we will usually write $K ^H = K ^\mu$, and we will use the notation $\scr{H} ^+ (H_\mu ) := \cH _{nc} (K ^\mu )$ for the right free Herglotz space of $H_\mu$. Here, we will also write $H = H_\mu$ (or sometimes $\mu = \mu _H$). As described in \cite{JM-freeCE,JM-freeAC}, if $H = H _\mu$, there is a natural onto isometry, the (right) \emph{free Cauchy transform}, $\mc{C} _\mu : \hardy (\mu ) \rightarrow \scr{H} ^+ (H_\mu )$: For any free polynomial $p \in \fp \subseteq  \hardy (\mu )$,
\ba (\mc{C} _\mu p ) (Z) & = & (\mr{id} _n \otimes \mu \circ \mrt  ) \left( (I_{n\times \bH ^2} -Z \circ L^*) ^{-1} \, I_n \otimes p(L) \right) \nn \\
& := & \sum _{\alpha \in \F ^d } Z^\alpha \mu \left( L^{\alpha *} p(L)  \right) \nn \\
& = &  \sum _{\alpha \in \F ^d } Z^\alpha \ip{L^\alpha + N_\mu }{p(L) + N_\mu } _\mu.   \nn \ea
In the above, as before, for any $Z \in \B ^d _\N$, $ZL^* = Z_1 \otimes L_1 ^* + ... + Z_d \otimes L_d ^*$ is a strict contraction. The final formula above extends to arbitrary $x \in \hardy (\mu)$. (In the first line of the formula above, the $\mr{t}$ symbol means that one needs to take the transpose of all words in $L^*$ appearing in the geometric sum of $(I _{n \times \bH ^2} - ZL^* ) ^{-1}$ to obtain the second line.)  \\

\subsection{NC Cauchy Transform of GNS row isometry}

The image of the GNS row isometry $\Pi _\mu$ under the free Cauchy transform is a row isometry on the free Herglotz space:
\be V _\mu = \mc{C} _\mu \Pi _\mu \mc{C}  _\mu  ^* := \mc{C} _\mu \left( \Pi _{\mu ; 1} , \cdots , \Pi _{\mu ; d } \right) \mc{C} _\mu ^* \otimes I_d : \scr{H} ^+ (H _\mu ) \otimes \C ^d \rightarrow \scr{H} ^+ (H _\mu ), \ee where  $\Pi _{\mu ; k } = \pi _\mu (L_k)$. The range of the row isometry $V _\mu $ is:
\be \nbran V_\mu =\bigvee _{\substack{(Z,y,v) \in \B ^d _n \times \C ^n \times \C ^n, \\  \ n \in \N }} \left( K^{\mu} \{ Z , y , v \} - K^{\mu} \{ 0 _n , y, v \} \right) = \bigvee _{\alpha \neq \emptyset} K^{\mu } _\alpha, \ee and
for any $Z \in \B ^d _n, \ y, v \in \C^n$, 
\be 
V_\mu   ^* \left( K^{\mu} \{ Z , y , v \} - K^{\mu} \{ 0 _n , y, v \} \right) = K ^{\mu} \{ Z , Z ^* y ,  v \} := \bsm K ^{\mu} \{ Z , Z_1 ^* y ,  v \}  \\ \vdots \\ K ^{\mu} \{ Z , Z_d ^* y ,  v \} \esm \in \scr{H} ^+ (H _\mu ) \otimes \C ^d. \label{Vmuaction} \ee
The linear span of all such vectors is dense in $\scr{H} ^+  (H_\mu) \otimes \C ^d$ since $V_\mu ^*$ is a co-isometry.

The image of $\nbran V_\mu$ under $\mc{C} _\mu  ^*$ is $\hardy (\mu ) _0 = \bigvee _{\alpha \neq \emptyset} L^\alpha + N_\mu$, the closed linear span of the non-constant free monomials in $\hardy (\mu )$. If $F \in \scr{H} ^+ (H_\mu)$ is orthogonal to $\nbran V_\mu$, then for any $Z \in \B ^d _n$, 
$$ F(Z) = I_n F(0)  $$ \emph{i.e.} $F \equiv F(0) \in \C$ is constant-valued. See \cite[Section 4.4]{JM-freeCE} for details. 

\begin{remark}
Recall that if $\mu = m$ is normalized NC Lebesgue measure (the vacuum state), then $H_\mu (Z) = I_n $ for any $Z \in \B ^d _n$ so that the NC Herglotz kernel, $K^{m} = K$ reduces to the NC Szeg\"{o} kernel and $\scr{H} ^+ (H_m) = H^2 (\B ^d _\N ) $ is simply the free Hardy space. In this case $V_m \simeq M^L _{Z} \simeq L$ is the left free shift. 
\end{remark}

\section{Lebesgue decomposition of NC Toeplitz forms} \label{sec:simon}

Any positive NC measure $\mu \in \posncm$ can be identified with a positive semi-definite quadratic form, $q _\mu$, with dense domain, $\nbdom q_\mu = \A$, in the Fock space. In this section we define the absolutely continuous and singular parts of any $\mu \in \posncm$ (with respect to normalized NC Lebesgue measure, $m$) by applying B. Simon's Lebesgue decomposition theory for forms to $q_\mu$ \cite[Section 2]{Simon1}. Standard references for the theory of potentially unbounded quadratic forms in Hilbert space are \cite{Kato} and \cite[Section VIII.6]{RnS1}.

\subsection{Closable Toeplitz forms} \label{Closedforms}
All inner products and sesquilinear forms in this paper are conjugate linear in their first argument. A seqsuilinear (or quadratic) form, $q : \nbdom q \times \nbdom q \rightarrow \C$, where the domain of $q$, $\nbdom q \subseteq \mc{H}$, is dense in a Hilbert space, $\mc{H}$, is positive semi-definite if $q (h , h ) \geq 0$ for all $h \in \nbdom q$. Given such a positive semi-definite quadratic form, $q$, define $\cH (q+1)$ as the Hilbert space completion of $\nbdom q$ with respect to the inner product:
$$ \ip{x}{y} _{q+1} := q(x,y) + \ip{x}{y} _\cH. $$ The form $q$ is \emph{closed} if $\nbdom q $ is complete in the norm of $\| \cdot \| _{q+1}$, \emph{i.e.} if $\nbdom q = \cH (q+1)$.

A positive semi-definite quadratic form $q$, with dense domain in a Hilbert Space, $\cH$, is closed if and only if there is a unique closed, positive semi-definite operator $A$, with dense domain in $\cH$ so that $\nbdom q = \mr{Dom}\, \sqrt{A}$ and
$$ q (h ,g ) = q_A (h, g) := \ip{ \sqrt{A} h }{\sqrt{A} g} _\cH; \quad \quad g,h \in \nbdom q, $$
\cite[Chapter VI, Theorem 2.1, Theorem 2.23]{Kato}.  This can be viewed as an extension of the Riesz representation lemma for bounded positive semi-definite quadratic forms.
A positive quadratic form, $q$, is \emph{closable} if it has a closed extension.  Equivalently, $q$ is closable if and only if the following condition holds: If $x_n \in \nbdom q$ converges to $0$, $x_n \rightarrow 0$, and $x_n$ is also Cauchy with respect to the pre-inner product defined by $q$,
$$ q(x_n-x_m , x_n - x_m ) \rightarrow 0, $$ then this sequence also converges to $0$ with respect to $q$, \emph{i.e.} $ q(x_n , x_n ) \rightarrow 0$. If $q$ is closable, then it has a minimal closed extension, $\ov{q}$, with $\nbdom \ov{q} \subseteq \cH$ equal to the set of all $h \in \cH$ so that there is a sequence $h_n \in \nbdom q$, such that $h_n \rightarrow h$ and $(h_n)$ is Cauchy in the norm of $\cH (q+1)$.  A linear subset $\mc{D} \subseteq \nbdom q$ is called a \emph{form core} for a closed form $q$ if $\mc{D}$ is a dense linear subspace in $\cH (q +1 )$. It follows that if $q$ is closable with closure (minimal closed extension) $\ov{q}$, then $\nbdom q$ is a form core for $\ov{q}$ \cite[Chapter VI, Theorem 1.21]{Kato}. If $q = q_A$ is a closed, positive semi-definite quadratic form, then $\mc{D}$ is a form core for $q$ if and only if $\mc{D}$ is a core for $\sqrt{A}$. In particular, $\nbdom A$ is a form core for $q$. Here, recall that a linear subspace $\mc{D} \subseteq \nbdom A$ is called a \emph{core} for a closed operator $A$, if $\mc{D} \oplus A \mc{D} $ is dense in the (closed) graph of $A$. 

In \cite[Section 2]{Simon1}, B. Simon proved that any densely-defined positive semi-definite quadratic form, $q$, acting in a Hilbert space $\cH$, has a unique Lebesgue decomposition:
$$ q = q_{ac} + q_s; \quad \quad q_{ac}, q_s \geq 0$$ 
where $q_{ac}$ is the maximal closable form bounded above by $q$, $q_{ac} \leq q$ and $q_s = q - q_{ac} \leq q$. Here, a partial order on positive semi-definite quadratic forms with dense domains in a Hilbert space $\cH$ is defined by $q_1 \leq q_2$ if $\nbdom q_2 \subseteq \nbdom q_1$ and \be q_1 (h, h) \leq q_2 (h,h) \quad \forall \, h \in \nbdom q_2. \label{formpo} \ee 

\begin{thm}{ (\cite[Section 2]{Simon1}, \cite[Theorem S.15]{RnS1})} \label{maxclosform}
If $q$ is a positive semi-definite quadratic form, densely-defined in a Hilbert space $\cH$, then there is a maximal closable positive semi-definite form $q_{ac}$ bounded above by $q$, $q_{ac} \leq q$, and $\nbdom q$ is a form-core for $\ov{q_{ac}}$. If $E : \cH (q+1) \hookrightarrow \cH $ is the canonical embedding, and $Q_s$ is the orthogonal projection onto $\nbker E$, $q_{ac}$ is given by the formula:
$$ q_{ac} (h_1 , h_2 )  = \ip{ h_1}{Q_{ac} h_2} _{\cH (q+1)} - \ip{h_1}{h_2} _\cH; \quad \quad Q_{ac} := I - Q_s. $$
\end{thm}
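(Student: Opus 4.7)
The plan is to exploit the canonical embedding $E : \cH(q+1) \to \cH$ and analyze its kernel, which measures the obstruction to closability. Since the inclusion $\nbdom q \hookrightarrow \cH$ is a contraction with respect to the $\| \cdot \|_{q+1}$-norm (because $\|x\|_\cH^2 \leq q(x,x) + \|x\|_\cH^2 = \|x\|_{q+1}^2$), it extends by density to a contraction $E$. Elements of $\nbker E$ are precisely $(q+1)$-limits of sequences $(h_n) \subset \nbdom q$ with $h_n \to 0$ in $\cH$ yet $(h_n)$ Cauchy in $\| \cdot \|_{q+1}$; the characterization of closability recalled in Subsection~\ref{Closedforms} then says that $q$ is closable exactly when $\nbker E = \{0\}$.

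Next I would verify that the formula $q_{ac}(h_1, h_2) := \ip{h_1}{Q_{ac} h_2}_{\cH(q+1)} - \ip{h_1}{h_2}_\cH$ (with $h_1, h_2 \in \nbdom q$ regarded as elements of $\cH(q+1)$ via the canonical inclusion of $\nbdom q$ into its completion) defines a positive semi-definite form bounded above by $q$. Since $E$ acts as the identity on $\nbdom q \subseteq \cH(q+1)$ and $E Q_s = 0$, the orthogonal decomposition $h = Q_{ac} h + Q_s h$ gives $\|h\|_\cH = \|E Q_{ac} h\|_\cH \leq \|Q_{ac} h\|_{q+1}$, so $q_{ac}(h,h) = \|Q_{ac} h\|_{q+1}^2 - \|h\|_\cH^2 \geq 0$. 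A parallel computation yields $q(h,h) - q_{ac}(h,h) = \|Q_s h\|_{q+1}^2 \geq 0$, proving $q_{ac} \leq q$.

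For closability of $q_{ac}$, suppose $(x_n) \subset \nbdom q$ satisfies $x_n \to 0$ in $\cH$ and is Cauchy in the $(q_{ac}+1)$-norm. Then $(Q_{ac} x_n)$ is Cauchy in $\cH(q+1)$ with some limit $\xi \in \nbran Q_{ac}$; continuity of $E$ forces $E\xi = \lim E Q_{ac} x_n = \lim x_n = 0$, so $\xi \in \nbker E \cap \nbran Q_{ac} = \{0\}$, whence $q_{ac}(x_n, x_n) = \|Q_{ac} x_n\|_{q+1}^2 - \|x_n\|_\cH^2 \to 0$. The statement that $\nbdom q$ is a form-core for $\overline{q_{ac}}$ is then the standard fact that the original domain of any closable form is dense in the graph norm of its closure.

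The main obstacle is the maximality claim: any closable $q' \leq q$ satisfies $q' \leq q_{ac}$. Let $\overline{q'}$ denote its closure and $E' : \cH(\overline{q'} + 1) \to \cH$ the analogous embedding, which is injective precisely because $\overline{q'}$ is closed. The inclusion $\nbdom q \subseteq \nbdom q'$ combined with $q'(h,h) \leq q(h,h)$ shows that the identity on $\nbdom q$, regarded as a map into $\cH(\overline{q'} + 1)$, is a $\|\cdot\|_{q+1}$-contraction and so extends uniquely to a contraction $J : \cH(q+1) \to \cH(\overline{q'}+1)$. The crucial step is that $J$ annihilates $\nbker E$: given $\xi = \lim h_n$ in $\cH(q+1)$ with $h_n \to 0$ in $\cH$, continuity of $J$ gives $J\xi = \lim h_n$ in $\cH(\overline{q'}+1)$, and then $E' J \xi = \lim h_n = 0$ in $\cH$, so injectivity of $E'$ forces $J \xi = 0$. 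Therefore $J = J Q_{ac}$, and for $h \in \nbdom q$ one computes
$$ q'(h,h) + \|h\|_\cH^2 = \|J h\|_{\overline{q'}+1}^2 = \|J Q_{ac} h\|_{\overline{q'}+1}^2 \leq \|Q_{ac} h\|_{q+1}^2 = q_{ac}(h,h) + \|h\|_\cH^2, $$
yielding $q'(h,h) \leq q_{ac}(h,h)$ on $\nbdom q_{ac} = \nbdom q$, i.e. $q' \leq q_{ac}$ in the partial order of \eqref{formpo}.
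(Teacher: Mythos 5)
Your argument is correct and is essentially the proof of Simon (and of Theorem S.15 in Reed--Simon) that the paper cites rather than reproves: the construction via the contractive embedding $E:\cH(q+1)\to\cH$, the projection $Q_{ac}=I-Q_s$ onto the orthogonal complement of $\nbker E$, the identity $\|Q_{ac}h\|_{q+1}^2=q_{ac}(h,h)+\|h\|_{\cH}^2$, and the contraction $J:\cH(q+1)\to\cH(\ov{q'}+1)$ killing $\nbker E$ for the maximality step are exactly the standard route. No gaps; the only points worth stating explicitly (closability of $q_{ac}$ via $\nbker E\cap\nbran Q_{ac}=\{0\}$, and injectivity of $E'$ because $\ov{q'}$ is closed) you have already handled correctly.
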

\begin{remark}
In the above, maximality refers to the partial order defined in Equation (\ref{formpo}). This theorem yields the unique Lebesque decomposition
$$ q = q_{ac} +q_s; \quad \quad 0 \leq q_s = q-q_{ac} \leq q, $$ where $q_{ac}$ is \emph{absolutely continuous}, \emph{i.e.} closable, and $q_s$ is \emph{singular} in the sense that the only closable positive semi-definite quadratic form bounded above by $q_s$ is the identically zero form.  
\end{remark}
The proof of the above Lebesgue decomposition theorem for quadratic forms has similarity to von Neumann's approach to Lebesgue decomposition theory \cite[Lemma 3.2.3]{vN3}, and in fact recovers von Neumann's classical proof of the Lebesgue decomposition of a finite positive and regular Borel measure (on say $\partial \D$), $\mu$, with respect to another, $\la$, if one takes $\cH = L^2 (\la, \partial \D )$, and $q_\mu$ to be the quadratic form with the continuous functions $\scr{C} (\partial \D ) \subset L^2 (\la )$ as a dense form domain, 
$$ q_\mu (f,g) := \int _{\partial \D } \ov{f (\zeta )} g (\zeta ) \mu (d\zeta ), $$\cite{Simon1}. We are primarily interested in positive semi-definite quadratic forms arising from positive NC measures $\mu \in \posncm$:
\begin{defn} \label{LToepdef}
A positive semi-definite sesquilinear form, $q$, with dense $L-$invariant domain $\nbdom q \subseteq \hardy$ is called \emph{left} or \emph{$L-$Toeplitz} if:
\be q (L_j g, L_k h ) = \delta _{k,j} \, q (g , h); \quad \quad g,h \in \mr{Dom} \, q. \label{LToepform} \ee 

A closed, positive semi-definite and densely-defined operator $T : \nbdom T \subseteq \hardy \rightarrow \hardy$ will be called \emph{left} or \emph{$L-$Toeplitz} if:
\bn 
\item $\mr{Dom} \, \sqrt{T}$ is $L-$invariant, and
\item the associated closed quadratic form 
$$ q _{T} ( g , h ) := \ip{\sqrt{T} g }{\sqrt{T} h } _{\hardy}; \quad \quad g, h \in \nbdom \sqrt{T}, $$ is $L-$Toeplitz.
\en
\end{defn}
In particular, if $\mu \in \posncm$ is any positive NC measure, then
\be q_\mu (a_1, a_2 ) := \mu (a_1 ^* a_2); \quad \quad a_1, a_2 \in \A, \label{qmu} \ee is a positive semi-definite $L-$Toeplitz form with dense form domain $\nbdom q_\mu  = \A \subset \hardy$. Note that the left Toeplitz condition, Equation \ref{LToepform}, is equivalent to:
$$ q(L^\alpha g , L^\beta  h) = \left\{ \begin{array}{ccc} q(L^{\ga} g, h ) & \quad  & \mbox{if} \ \alpha = \beta \ga \ \mbox{for some} \ \ga \in \F ^d, \\ q (g , L^{\ga } h) & & \mbox{if} \ \beta = \alpha \ga, \\ 
    0 & &  \mbox{else.} \end{array} \right. $$
\begin{remark}
One could further define unbounded $L-$Toeplitz forms and operators which are not positive semi-definite, but we will have no need for this concept. Most $L-$Toeplitz forms, $q$, and operators, $T$, we consider will be such that the free polynomials, $\fp = \C \{ \mf{z} _1 , \cdots , \mf{z} _d \}$, and the free disk algebra, $\A$, are $L-$invariant cores for both $q$ and $\sqrt{T}$.
\end{remark}
\begin{remark} \label{Toedef} 
By the Riesz representation lemma, a bounded sesquilinear form on $\hardy$ is $L-$Toeplitz if and only if it is the quadratic form of a bounded positive semi-definite $L-$Toeplitz operator. Moreover, a bounded positive semi-definite operator, $T\in \scr{L} (\hardy)$, is $L-$Toeplitz if and only if
$$ L_k ^* T L_j = \delta _{k,j} T. $$ Such operators are called multi-Toeplitz in \cite{Pop-entropy}. Here, recall that a bounded operator $T$ on the Hardy space $H^2 (\D )$ is called \emph{Toeplitz} if $T = T_f = P_{H^2} M_f | _{H^2}$ for some $f \in L^\infty (\partial \D )$. A result of Brown and Halmos identifies the bounded Toeplitz operators as the set of all bounded operators $T \in \scr{L} (H^2 )$ with the \emph{Toeplitz property}:
$$ S^* T S = T, $$ where $S = M_z$ is the shift on $H^2$ \cite[Theorem 6]{BrownHalmos}. 
\end{remark}
\begin{lemma}\label{fpcore}
If $T$ is a closed positive semi-definite $L-$Toeplitz operator so that $\A$ is a core for $\sqrt{T}$, then $\mr{Dom} \, \sqrt{T}$ is $L-$invariant and $\fp$ is a core for $\sqrt{T}$.
\end{lemma}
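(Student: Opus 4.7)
The plan is to prove the two conclusions in turn, with the core assertion being a straightforward closed-graph argument and the $L$-invariance requiring only the $L$-Toeplitz identity on $\A$.

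For $L$-invariance of $\mr{Dom}\sqrt{T}$, I would argue as follows. Given $h \in \mr{Dom}\sqrt{T}$, use the core hypothesis to pick $a_n \in \A$ with $a_n \to h$ in the graph norm of $\sqrt{T}$. Since $\A$ is $L$-invariant (it is a unital algebra containing each $L_k$), the $L$-Toeplitz identity applies to the sequence $(a_n - a_m)$:
\[
\|\sqrt{T}L_k(a_n-a_m)\|^2 = q_T(L_k(a_n-a_m),L_k(a_n-a_m)) = q_T(a_n-a_m,a_n-a_m) = \|\sqrt{T}(a_n-a_m)\|^2 \to 0,
\]
while $\|L_k(a_n-a_m)\|_{\bH^2_d} \leq \|a_n - a_m\|_{\bH^2_d} \to 0$ since $L_k$ is a contraction on the Fock space. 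Thus $L_k a_n$ is Cauchy in the graph norm, converges in $\bH^2_d$ to $L_k h$, and closedness of $\sqrt{T}$ gives $L_k h \in \mr{Dom}\sqrt{T}$. (Of course, this is already built into Definition~\ref{LToepdef}; the argument just records that $L$-invariance of the full domain follows from $L$-invariance and the form identity on the core $\A$.)

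The main content is the assertion that $\fp$ is a core. The idea is to consider the linear map $\Phi : \A \to \bH^2_d$ defined by $\Phi(a) := \sqrt{T}(a \cdot 1)$. The core hypothesis ensures $\A \cdot 1 \subseteq \mr{Dom}\sqrt{T}$, so $\Phi$ is well-defined on all of $\A$. Viewed as a norm-closed subalgebra of $\scr{L}(\bH^2_d)$, $\A$ is a Banach space in the operator norm. The graph of $\Phi$ is closed: if $a_n \to a$ in operator norm, then $\|a_n \cdot 1 - a \cdot 1\|_{\bH^2_d} \leq \|a_n - a\|_{op} \to 0$, so if additionally $\Phi(a_n) = \sqrt{T}(a_n \cdot 1) \to w$ in $\bH^2_d$, closedness of the self-adjoint operator $\sqrt{T}$ forces $a \cdot 1 \in \mr{Dom}\sqrt{T}$ with $\Phi(a) = \sqrt{T}(a\cdot 1) = w$. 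The closed graph theorem then yields $C > 0$ with
\[
\|\sqrt{T}(a \cdot 1)\|_{\bH^2_d} \leq C \|a\|_{op}, \qquad a \in \A.
\]

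Finally, by the very definition $\A = \mr{Alg}\{I, L_1, \ldots, L_d\}^{-\|\cdot\|}$, the space $\fp$ of free polynomials (identified with its image $\fp \cdot 1 \subset \bH^2_d$) is dense in $\A$ in operator norm. For $a \in \A$ and $p_n \in \fp$ with $\|p_n - a\|_{op} \to 0$, we obtain $\|p_n \cdot 1 - a \cdot 1\|_{\bH^2_d} \to 0$ and, via the bound on $\Phi$, $\|\sqrt{T}(p_n \cdot 1) - \sqrt{T}(a \cdot 1)\|_{\bH^2_d} \leq C \|p_n - a\|_{op} \to 0$. Hence $\fp$ is dense in $\A$ in the graph norm of $\sqrt{T}$, and composing with the core hypothesis that $\A$ is graph-norm dense in $\mr{Dom}\sqrt{T}$ shows that $\fp$ is itself a core for $\sqrt{T}$. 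The one place that might seem a priori troublesome is the a priori boundedness of $\Phi$ with respect to the operator norm on $\A$ (since the form $q_T$ need not be operator-norm bounded in general); this is exactly where the closed graph theorem does the work, using only closedness of $\sqrt{T}$ and the continuity of evaluation at the vacuum vector.
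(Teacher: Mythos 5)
Your proof is correct. The $L$-invariance step is essentially the paper's own argument: approximate $h\in \mr{Dom}\,\sqrt{T}$ in graph norm by $a_n \in \A$, use the $L$-Toeplitz form identity on $\A$ to see that $\sqrt{T}L_k a_n$ is Cauchy, and invoke closedness of $\sqrt{T}$. For the assertion that $\fp$ is a core, however, you take a genuinely different route. The paper proves the quantitative estimate $\| \sqrt{T}\,(p_n(L)-p_m(L))1 \| \leq \| p_n(L)-p_m(L) \| \, \| \sqrt{T}\,1 \|$ by constructing the row isometry $\Pi$ on $\ov{\mr{Ran}\,\sqrt{T}}$ determined by $\Pi_k \sqrt{T}x = \sqrt{T}L_k x$ (this is where the $L$-Toeplitz property and $L$-invariance of the domain are used) and then appealing to complete contractivity of the induced representation of the Cuntz--Toeplitz algebra (equivalently, Popescu's von Neumann inequality). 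You instead obtain boundedness of the everywhere-defined map $a \mapsto \sqrt{T}(a\cdot 1)$ from $(\A , \| \cdot \|)$ into $\hardy$ abstractly, via the closed graph theorem, using only closedness of $\sqrt{T}$ and norm-continuity of evaluation at the vacuum vector; operator-norm density of $\fp$ in $\A$ then upgrades automatically to graph-norm density, and composing with the core hypothesis finishes. Your argument is more elementary and never uses the Toeplitz structure in this step, so it proves a more general fact about closed operators whose square-root domain contains $\A$ as a core; what the paper's argument buys in exchange is the explicit constant $\| \sqrt{T}\,1 \|$ and the intertwining row isometry $\Pi$, a construction that recurs later in the paper (for instance in the proof of Theorem \ref{tfactor}), whereas your constant from the closed graph theorem is non-constructive. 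Your parenthetical observation that $L$-invariance of $\mr{Dom}\,\sqrt{T}$ is already part of Definition \ref{LToepdef} is also accurate; like the paper, you are simply recording that it follows from the core hypothesis alone.
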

\begin{proof}
By assumption, $\mc{A} _d$ is a core for $\sqrt{T}$. To see that $\mr{Dom} \sqrt{T}$ is $L-$invariant, given any $x \in \mr{Dom} \sqrt{T}$, choose $a_n \in \mc{A} _d$ so that $a_n \rightarrow x$ in $\hardy$, and $\sqrt{T} a_n \rightarrow \sqrt{T} x$. Then for any $1 \leq k \leq d$, $L_k a_n \in \A \subseteq \mr{Dom} \sqrt{T}$, $L_k a_n \rightarrow L_k x$, and 
\begin{align*} \| \sqrt{T} (L_k a_n - L_k a_m ) \| ^2 _{\bH ^2} & = q_T \left( L_k (a_n -a_m) , L_k (a_n -a_m) \right) \\
& = q_T ( a_n -a_m , a_n -a_m ) \quad \quad \mbox{($q_T$ is an $L-$Toeplitz form)} \\ 
& = \| \sqrt{T} (a_n - a_m ) \| _{\bH ^2} ^2, \end{align*} so that $\sqrt{T} L_k a_n$ is also Cauchy and converges to some $y \in \hardy$. Since $\sqrt{T}$ is closed, it follows that $\sqrt{T} L_k x = y$, and $\mr{Dom} \sqrt{T}$ is $L-$invariant. 

To show that $\fp$ is a core for $\sqrt{T}$, it suffices to show that the set of all $p \oplus \sqrt{T} p $ for $p \in \fp$ is dense in the set of all $a \oplus \sqrt{T} a$ for $a \in \mc{A} _d$, since this latter set is dense in the graph of $\sqrt{T}$.
Since $\mc{A} _d = \mr{Alg} \{ I , L_1 , \cdots , L_d \} ^{-\| \cdot \|}$, given any $a(L) \in \mc{A} _d$, there exists a sequence of free polynomials $p_n (L)$ so that $p_n (L) \rightarrow a (L)$ in operator norm. In particular, $p_n := p_n (L) 1 \rightarrow a := a(L) 1$ in $\hardy$. For each $1 \leq k \leq d$, define the linear map $\Pi _k$ on $\mr{Ran} \sqrt{T}$ by 
$$ \Pi _k \sqrt{T} x = \sqrt{T} L_k x; \quad \quad x \in \mr{Dom} \sqrt{T}. $$ (This is well-defined since $\mr{Dom} \sqrt{T}$ is $L-$invariant.) Since $T$ is $L-$Toeplitz, it is easy to check that the $\Pi _k$ define row isometries with pairwise orthogonal ranges: Given $x,y \in \mr{Dom} \sqrt{T}$,
\ba \ip{\Pi _k \sqrt{T}x}{ \Pi _j \sqrt{T}y} & = & \ip{\sqrt{T} L_k x}{\sqrt{T} L_j y} \nn \\
& = & \delta _{k,j} \, \ip{\sqrt{T}x}{\sqrt{T}y}. \label{Trowisom} \ea 
Hence, $\Pi := \left( \Pi _1 , \cdots , \Pi _d \right)$ extends by continuity to a row isometry on $\cH _T := \ov{\mr{Ran} \sqrt{T}}$. It follows that,
\ba  \| \sqrt{T} (p_n -p_m ) \|  _{\bH ^2} & = & \| \sqrt{T} (p_n (L) - p_m (L) ) 1 \| _{\bH ^2} \nn \\ 
& = & \|  \left( p_n (\Pi) - p_m (\Pi) \right) \sqrt{T} 1 \|   \nn \\ 
& \leq & \| p_n (\Pi ) - p_m (\Pi ) \| \| \sqrt{T} 1 \| \nn \\
& \leq & \| p_n (L) - p_m (L) \| \| \sqrt{T} 1 \|. \nn \ea In the above we have used that $\pi (L) = \Pi$ extends to a unital $*-$representation of the Cuntz-Toeplitz $C^*-$algebra, $\mc{E} _d := C^* \{ I, L_1 , \cdots , L_d \}$ and hence is completely contractive. (Alternatively, this also follows from a trivial application of Popescu's NC von Neumann inequality \cite{Pop-vN}.) Since the sequence $p_n (L)$ is Cauchy in operator norm, it follows that $(\sqrt{T} p_n )$ is Cauchy in $\hardy$ and converges to some $y \in \hardy$. As before, since $\sqrt{T}$ is closed, it follows that $\sqrt{T} a =y$. This proves that $\fp$ is a core for $\sqrt{T}$. 
\end{proof}

\subsection{Absolutely continuous and singular $L-$Toeplitz forms}

Let $\mu \in \posncm$ be any positive NC measure and let $q_\mu$ be the $L-$Toeplitz form with dense form domain $\mc{A} _d \subset \bH ^2 _d$ defined by $\mu$, see Equation \ref{qmu}. Then $q_\mu$ has a Lebesgue decomposition into absolutely continuous (closable) and singular parts given by Theorem \ref{maxclosform}. We now define an NC measure, $\mu$, to be absolutely continuous, or singular, if the $L-$Toeplitz form $q_\mu$ is absolutely continuous or singular, respectively.

\begin{defn} \label{NCncld}
A positive NC measure $\mu \in \posncm$ is:
\bn 
\item \emph{absolutely continuous} (AC) (with respect to NC Lebesgue measure, $m$) if $q_\mu$ is an absolutely continuous (\emph{i.e.} closable) quadratic form.

\item \emph{singular} (again with respect to $m$), if $q_\mu$ is singular, \emph{i.e.} the maximal absolutely continuous part of the $L-$Toeplitz form $q_\mu$ vanishes identically.
\en
\end{defn}

In particular, if $q_\mu = q_{ac} + q_s$ is the Lebesgue decomposition of the $L-$Toeplitz form $q_\mu$, then the explicit formula of Theorem \ref{maxclosform} shows that the absolutely continuous form $q _{ac} : \A \times \A \rightarrow \C$ can be expressed as:
\ba q _{ac} (a_1, a_2) & = & \ip{ a_1 + N_{\mu +m}}{Q_{ac} (a_2 + N_{\mu +m})}_{\mu +m} - \ip{a_1}{a_2}_{\bH ^2 _d} \nn \\
& = & \ip{a_1 + N_{\mu +m}}{(Q_{ac} - E^* E ) (a_2 + N_{\mu +m})}_{\mu +m}. \label{qacformula} \ea Here, 
$Q_s = I - Q_{ac}$ is the orthogonal projection onto the kernel of the contractive embedding $E : \hardy (\mu +m ) \hookrightarrow \hardy = \hardy (m)$ defined by $E (p(L) + N_{\mu +m} ) := p(L) 1 = p \in \hardy$. (Note that $E^* E \leq P_{\ov{\nbran E^*}} = Q_{ac}$.) In general, given an arbitrary positive NC measure $\mu$, it is not obvious whether or not there exist positive NC measures $\mu _{ac} ,\mu _s \in \posncm$ so that $q_{ac} = q_{\mu _{ac}}$, $q_s = q_{\mu _s}$ and $\mu = \mu _{ac} + \mu _s$. This will be established in the next section using NC Cauchy transform techniques, and this will yield a unique Lebesgue decomposition of any $\mu \in \posncm$ into the sum of absolutely continuous and singular (positive) NC measures, see Theorem \ref{NCLD}.

\begin{remark}
When $d=1$, and $\hat{\mu}$ is the linear functional on $\scr{C} (\partial \D)$ corresponding to a finite, positive and regular Borel measure, $\mu$, it is also not obvious that $q_{ac}$ and $q_s$, where $q_\mu = q_{ac} + q_s$ is the Lebesgue form decomposition, correspond to the linear functionals of the absolutely continuous and singular parts of the measure $\mu$ (with respect to normalized Lebesgue measure, $m$, on $\partial \D$), as constructed in classical Measure Theory. The fact that one recovers the classical Lebesgue decomposition in this way follows from the results of our companion paper, see \cite[Corollary 8.5]{JM-ncld}.  
\end{remark}
\begin{eg} \label{ACeg}
Given any $x \in \hardy$, let $m_x \in \posncm$ denote the positive vector functional:
$$ m_x (L^\alpha ) := \ip{x}{L^\alpha x}_{\hardy}. $$ The results of \cite{JM-F2Smirnov,JM-freeSmirnov} show that given $x \in \hardy$, one can define $x^\mrt (R)=M^R _{x}$, where $x ^\mrt (R)1 = x \in \hardy$ as a densely-defined, closed, and potentially unbounded right multiplier defined on a dense domain, $\nbdom x ^\mrt (R) \supseteq \A$, in the Fock space with symbol in the (right) \emph{free Smirnov class} $\scr{N} _d ^+ (R)$, the set of all ratios of bounded right multipliers $B(R) A(R) ^{-1}$ with outer (dense range) denominator.  We will write $x ^\mrt (R) \sim R^\infty _d$ to denote that $x ^\mrt (R)$ is an unbounded right multiplier affiliated to the right free analytic Toeplitz algebra $R^\infty _d$. That is, it commutes with the left free shifts in the sense that $\nbdom x^\mrt (R)$ is $L-$invariant, and $x^\mrt (R) L_k h = L_k x^\mrt (R) h$ for any $h \in \nbdom x^\mrt (R)$. The (potentially unbounded) left--Toeplitz operator $T := x^\mrt (R) ^* x^\mrt (R)$ is then well-defined, closed, positive semi-definite and densely-defined, and $q_{m_x}$ agrees with $q_T$ on the dense form domain $\A$. The form $q_{m_x}$ is then a closable $L-$Toeplitz form, so that $m_{x}  \in \posncm$ is an absolutely continuous (AC) positive NC measure. In fact, any AC positive NC measure is a vector state on the Fock space (although it may have the asymmetric form $\mu (L^\alpha ) = m_{x,y} (L^\alpha) = \ip{x}{L^\alpha y } _{\hardy}$ for some $y \neq x; \ x,y \in \hardy$) \cite[Remark 6.19, Corollary 6.23]{JM-ncld}. 
\end{eg}

\section{Cauchy Transforms of NC measures}

\label{FreeCTsect}

The goal of this section is to define absolutely continuous and singular NC measures, and to show that any positive NC measure $\mu \in \posncm$ has a unique Lebesgue decomposition, $\mu = \mu _{ac} + \mu _s$, into absolutely continuous and singular parts, $\mu _{ac}, \mu _s \in \posncm$ by proving that the absolutely continuous and singular parts of the Lebesgue form decomposition of $\mu$ are positive NC measures. If $\mu$ is any positive finite and regular Borel measure on $\partial \D$,
the space of all $\mu-$Cauchy transforms,
$$ (\mc{C} _\mu h) (z) := \int _{\partial \D } \frac{1}{1-z\zeta ^*} h(\zeta ) \mu (d\zeta ); \quad \quad h \in H^2 (\mu), \ z \in \D, $$ of the analytic part, $H^2 (\mu)$ of $L^2 (\partial \D , \mu )$, is the reproducing kernel Hilbert space, $\scr{H} ^+ (h _\mu)$ of functions in $\D$ (the Herglotz space of $h_\mu$) with reproducing kernel
$$ K^\mu (z,w) := \frac{1}{2} \frac{ h_\mu (z) + h_\mu (w) ^*}{1-zw^*} = \int _{\partial \D } \frac{1}{1-z\zeta ^*}\frac{1}{1 -\zeta w^*} \mu (d\zeta ), $$ where $h_\mu (z)$ is the Herglotz function of $\mu$.  In this setting, is not difficult to verify that domination of positive measures is equivalent to domination of the kernels for their spaces of Cauchy transforms: 
$$  \mu \leq t^2 \la \quad \Leftrightarrow \quad K^\mu \leq t^2 K^\la. $$ In particular, the following exact NC analogue of a reproducing kernel theory result due to Aronszajn applies \cite[Theorem 5.1]{Paulsen-rkhs} \cite[Theorem I, Section 7]{Aron-RKHS}: 

\begin{thm}
Let $K_1, K_2$ be CPNC kernels on an NC set $\Om$. Then $K_1 \leq t^2 K_2$ for some $t>0$ if and only if  
$$ \mc{H} _{nc} (K_1 ) \subseteq \mc{H} _{nc} (K_2), $$ and the norm of the 
embedding $\mr{e} : \mc{H} _{nc} (K_1 ) \hookrightarrow \mc{H} _{nc} (K_2)$ is at most $t$.
\end{thm}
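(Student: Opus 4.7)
The plan is to mimic the classical Aronszajn argument, replacing scalar kernel functions $k_y$ with the NC kernel vectors $K\{Z,y,v\}$ and using the Gram-matrix characterization of the ordering on CPNC kernels. Recall that the defining property of the NC-RKHS $\mc{H}_{nc}(K)$ is
$$ \ipcn{y}{K(Z,W)[vu^*]x} = \ip{K\{Z,y,v\}}{K\{W,x,u\}}_{\mc{H}_{nc}(K)}, $$
so the inequality $K_1\le t^2K_2$ is equivalent to the statement that for every finite family $\{(Z_i,y_i,v_i)\}_{i=1}^N$ with $Z_i\in\Om_{n_i}$, $y_i,v_i\in\C^{n_i}$, the scalar Gram matrices satisfy
$$ \bigl[\ipcn{y_i}{K_1(Z_i,Z_j)[v_iv_j^*]y_j}\bigr]_{i,j} \ \leq\ t^2\bigl[\ipcn{y_i}{K_2(Z_i,Z_j)[v_iv_j^*]y_j}\bigr]_{i,j}. $$
Linear combinations of NC kernel vectors are dense in each NC-RKHS (as noted in the preceding lemma for NC Szeg\H{o} kernels, but the argument is general, since $\mr{span}\,\{K\{Z,y,v\}\}$ is dense in $\mc{H}_{nc}(K)$ by the reproducing property).

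\textbf{($\Rightarrow$).} Assume $K_1\le t^2K_2$. Define a linear map $T$ from $\mr{span}\{K_2\{Z,y,v\}\}\subseteq\mc{H}_{nc}(K_2)$ into $\mc{H}_{nc}(K_1)$ by
$$ T\sum_i c_i\,K_2\{Z_i,y_i,v_i\} \ := \ \sum_i c_i\,K_1\{Z_i,y_i,v_i\}. $$
The Gram-matrix inequality above shows that $\|Tx\|_{\mc{H}_{nc}(K_1)}^2\le t^2\|x\|_{\mc{H}_{nc}(K_2)}^2$ on the span, so in particular $T$ is well-defined and extends by continuity to a bounded operator $T:\mc{H}_{nc}(K_2)\to\mc{H}_{nc}(K_1)$ of norm at most $t$. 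I then consider $T^*:\mc{H}_{nc}(K_1)\to\mc{H}_{nc}(K_2)$. For any $f\in\mc{H}_{nc}(K_1)$ and any $(Z,y,v)\in\Om_n\times\C^n\times\C^n$,
$$ \ipcn{y}{(T^*f)(Z)\,v} = \ip{K_2\{Z,y,v\}}{T^*f}_{\mc{H}_{nc}(K_2)} = \ip{TK_2\{Z,y,v\}}{f}_{\mc{H}_{nc}(K_1)} = \ipcn{y}{f(Z)\,v}. $$
Since $y$ and $v$ are arbitrary, $(T^*f)(Z)=f(Z)$ for every $Z$, so $T^*$ coincides with the set-theoretic inclusion $\mc{H}_{nc}(K_1)\hookrightarrow\mc{H}_{nc}(K_2)$. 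In particular $\mc{H}_{nc}(K_1)\subseteq\mc{H}_{nc}(K_2)$ and the embedding $\mr{e}=T^*$ has norm $\|T\|\le t$.

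\textbf{($\Leftarrow$).} Conversely, assume $\mc{H}_{nc}(K_1)\subseteq\mc{H}_{nc}(K_2)$ with embedding $\mr{e}$ of norm at most $t$. Then $\mr{e}^*:\mc{H}_{nc}(K_2)\to\mc{H}_{nc}(K_1)$ satisfies $\|\mr{e}^*\|\le t$, and for any $f\in\mc{H}_{nc}(K_1)$,
$$ \ip{\mr{e}^*K_2\{Z,y,v\}}{f}_{\mc{H}_{nc}(K_1)} = \ip{K_2\{Z,y,v\}}{\mr{e}f}_{\mc{H}_{nc}(K_2)} = \ipcn{y}{(\mr{e}f)(Z)v} = \ipcn{y}{f(Z)v} = \ip{K_1\{Z,y,v\}}{f}_{\mc{H}_{nc}(K_1)}, $$
so $\mr{e}^*K_2\{Z,y,v\}=K_1\{Z,y,v\}$. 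For any finite family $\{(Z_i,y_i,v_i),c_i\}$,
$$ \Bigl\|\sum_ic_iK_1\{Z_i,y_i,v_i\}\Bigr\|_{\mc{H}_{nc}(K_1)}^2 = \Bigl\|\mr{e}^*\sum_ic_iK_2\{Z_i,y_i,v_i\}\Bigr\|_{\mc{H}_{nc}(K_1)}^2 \le t^2\Bigl\|\sum_ic_iK_2\{Z_i,y_i,v_i\}\Bigr\|_{\mc{H}_{nc}(K_2)}^2, $$
which, unpacking into Gram matrices via the reproducing formula, yields $K_1\le t^2K_2$.

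\textbf{Main obstacle.} The only conceptual subtlety is to fix the correct meaning of the inequality $K_1\le t^2K_2$ in the NC setting; once this is phrased as domination of scalar Gram matrices over all finite test collections $\{(Z_i,y_i,v_i)\}$ (equivalently, $t^2K_2-K_1$ is itself a CPNC kernel), the argument is a direct transcription of the classical Aronszajn proof. No new NC-function-theoretic input is required beyond the defining relation between the kernel vectors $K\{Z,y,v\}$ and the CPNC kernel $K(Z,W)[\,\cdot\,]$.
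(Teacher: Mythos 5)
Your proof is correct: the paper itself offers no proof of this theorem, citing instead the classical Aronszajn/Paulsen result, and your argument is exactly the expected transcription of that classical proof to the NC setting (define $T$ on the span of the $K_2$-kernel vectors, identify $T^*$ with the inclusion via the reproducing property, and reverse the computation for the converse). The only genuinely NC point --- that Gram-matrix domination over all finite families $(Z_i,y_i,v_i)$ is equivalent to $t^2K_2-K_1$ being a CPNC kernel, which uses closure of $\Om$ under direct sums together with \cite[Subsection 2.4]{BMV}, just as in the paper's lemma characterizing the right Herglotz class --- is precisely the subtlety you flag, so nothing is missing.
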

As in the single-variable setting, it is easy to verify that domination of (positive) NC measures $\mu, \la \in \posncm$ is equivalent to domination of the NC kernels for their spaces of Cauchy Transforms:  
\begin{lemma} \label{CTembed}
Given $\mu , \la \in \posncm$, there is a $t>0$ so that 
$ \mu \leq t^2 \la, $ if and only if $K^\mu \leq t^2 K^\la$. If $E_\mu : \hardy (\la ) \hookrightarrow \hardy (\mu )$ 
and $\mr{e} _\mu : \scr{H} ^+ (H _\mu ) \hookrightarrow \scr{H} ^+ (H _\la )$ are the canonical embeddings defined by 
$$ E_\mu \left( a(L) + N _\la \right) = a(L) + N_\mu, \quad \mbox{and} \quad (\mr{e} _{\mu} h) (Z) = h(Z); \quad Z \in \B ^d _\N, $$ then, $E_\mu = \mc{C} _\mu ^* \mr{e} _\mu ^* \mc{C} _\la$, $\| E _\mu \| = \| \mr{e} _\mu \| \leq t$, and $E_\mu \Pi _\la ^\alpha = \Pi _\mu ^\alpha E_\mu$.  
\end{lemma}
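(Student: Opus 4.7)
The plan is a two-step reduction. First I would establish the biconditional $\mu \leq t^2 \la \iff K^\mu \leq t^2 K^\la$ by exploiting the linearity of the two constructions $\mu \mapsto H_\mu$ and $H \mapsto K^H$. Writing $\nu := t^2 \la - \mu$, the Taylor-series formula (\ref{HergTTseries}) gives $H_\nu = t^2 H_\la - H_\mu$, and the defining formula $K^H(Z,W) = \frac{1}{2} K(Z,W)[H(Z)(\cdot) + (\cdot) H(W)^*]$ gives $K^\nu = t^2 K^\la - K^\mu$. The kernel characterization of the right NC Herglotz class (the Lemma stated just above) now yields $\nu \in \posncm$ iff $K^\nu$ is a CPNC kernel iff $t^2 K^\la - K^\mu \geq 0$, which proves the first assertion.

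Second, I would construct both embeddings with their norm bounds. On the GNS side, the inequality $\mu(a^*a) \leq t^2 \la(a^*a)$ for all $a \in \A$ makes $E_\mu(a + N_\la) := a + N_\mu$ well-defined and bounded of norm $\leq t$, and the intertwining $E_\mu \Pi _\la ^\alpha = \Pi _\mu ^\alpha E_\mu$ is immediate since both sides send $a + N_\la$ to $L^\alpha a + N_\mu$. On the RKHS side, the non-commutative Aronszajn-type theorem cited just before the statement turns $K^\mu \leq t^2 K^\la$ into the set-theoretic inclusion $\mr{e} _\mu : \scr{H} ^+ (H_\mu) \hookrightarrow \scr{H} ^+ (H_\la)$ of norm $\leq t$, whose adjoint acts on kernel vectors by $\mr{e} _\mu ^* K^\la\{Z,y,v\} = K^\mu\{Z,y,v\}$.

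It remains to identify the two embeddings through the Cauchy transform, i.e.\ to verify the operator identity $E_\mu = \mc{C} _\mu ^* \mr{e} _\mu ^* \mc{C} _\la$. Both sides are bounded operators $\hardy(\la) \to \hardy(\mu)$, so it suffices to test on the dense subset $\A + N_\la$. Pairing with vectors $b + N_\mu$ ($b \in \A$) dense in $\hardy(\mu)$, using unitarity of $\mc{C} _\la, \mc{C} _\mu$ and the set-theoretic identity $\mr{e} _\mu \mc{C} _\mu(b + N_\mu) = \mc{C} _\mu(b + N_\mu)$, reduces the claim to the scalar equality
$$ \mu(a^* b) \;=\; \ip{\mc{C} _\la(a + N_\la)}{\mc{C} _\mu(b + N_\mu)} _{\scr{H} ^+ (H_\la)} \qquad (a, b \in \A). $$
By sesquilinearity this need only be checked on monomials $a = L^\beta$, $b = L^\gamma$. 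Writing $\mc{C} _?(L^\alpha + N_?) = V_? ^\alpha K^?\{0,1,1\}$ (from $V_? = \mc{C} _? \Pi _? \mc{C} _? ^{-1}$), moving $V_\la ^{\beta *}$ to the opposite side of the inner product, and applying the reproducing property of $K^\la\{0,1,1\}$ at the origin of $\scr{H} ^+ (H_\la)$, each side collapses via the explicit Cauchy-transform power series to $\mu((L^\beta)^* L^\gamma)$. Once the identity is established, $\|E_\mu\| = \|\mr{e} _\mu ^*\| = \|\mr{e} _\mu\|$ is automatic from unitarity of the two Cauchy transforms.

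The most delicate point is the scalar identity above. The case $a = 1$ is immediate: the right-hand side equals $\mc{C} _\mu(b + N_\mu)(0) = \mu(b)$ by reproducing at $0$ in $\scr{H} ^+ (H_\la)$. The general case cannot be deduced from a naive commutation $\mr{e} _\mu V_\mu = V_\la \mr{e} _\mu$, because $V_\mu(L_k)$ on $\scr{H} ^+ (H_\mu)$ carries a non-trivial constant offset $\mu(L_k \cdot)$ that differs from its $\la$-analogue. Instead the argument must route through the Aronszajn identity $\mr{e} _\mu ^* K^\la = K^\mu$, the explicit formula (\ref{Vmuaction}) for $V_\la ^*$ on kernel-function differences, and careful bookkeeping of the Taylor coefficients appearing in the Cauchy-transform expansion.
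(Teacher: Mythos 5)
Your proposal is correct, and it is worth noting that the paper never actually proves Lemma \ref{CTembed} -- it is asserted with the remark that, as in one variable, it is ``easy to verify'' -- so your argument fills in exactly what the authors left to the reader, by what is surely the intended route: linearity of $\mu \mapsto H_\mu$ and $H \mapsto K^H$ plus the kernel characterization of the Herglotz class for the first equivalence, the NC Aronszajn theorem for $\mr{e}_\mu$, and a density check for the identity $E_\mu = \mc{C}_\mu^* \mr{e}_\mu^* \mc{C}_\la$. Two refinements. First, in the equivalence $\mu \leq t^2 \la \Leftrightarrow K^\mu \leq t^2 K^\la$, the kernel lemma only converts complete positivity of $K^\nu$, $\nu := t^2\la - \mu$, into the statement that $H_\nu = t^2 H_\la - H_\mu$ is an NC Herglotz function; to return to positivity of the bounded self-adjoint functional $\nu$ you should also invoke Theorem \ref{freeHergthm}, which produces $\nu' \in \posncm$ with $H_{\nu'} = H_\nu$, so that $\nu = \nu'$ by matching Taylor coefficients on the dense subspace $\A + \A^*$; and the norm-density of sums of squares in the positive cone of $\scr{A}_d$ is what reconciles the operator-system order $\mu \leq t^2\la$ with the quadratic inequality $\mu(a^*a) \leq t^2 \la (a^*a)$ used to define $E_\mu$. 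Second, the ``delicate'' scalar identity is immediate once you observe that $\mc{C}_\la (L^\beta + N_\la) = K^\la_\beta$, the coefficient evaluation vector of $\scr{H}^+ (H_\la)$: for any $x \in \hardy (\la)$ the $\beta$-th Taylor coefficient of $\mc{C}_\la x$ is $\ip{L^\beta + N_\la}{x}_\la$, so $\ip{\mc{C}_\la (L^\beta + N_\la)}{f}_{\scr{H}^+(H_\la)} = \hat{f}_\beta$ for every $f$ in the full range of $\mc{C}_\la$. Applying this with $f = \mr{e}_\mu \mc{C}_\mu (L^\gamma + N_\mu)$, whose $\beta$-th coefficient is $\mu (L^{\beta *} L^\gamma)$ since $\mr{e}_\mu$ does not change the function, gives the scalar identity with no bookkeeping of $V_\la^*$ at all; your route through $V_\la^{\beta *}$ and $K^\la \{0,1,1\}$ also closes, because $(V_\la^{\beta *} g)(0) = \hat{g}_\beta$ for $g = \mc{C}_\la x$ (as $\mc{C}_\la$ intertwines $\Pi_\la$ with $V_\la$), but that is the same fact in disguise. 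You are also right that the naive intertwining $\mr{e}_\mu V_\mu = V_\la \mr{e}_\mu$ fails in general: $(V_{\mu ; k} \mc{C}_\mu (a + N_\mu))(Z) = Z_k \, (\mc{C}_\mu (a+N_\mu))(Z) + \mu (L_k a) I_n$, and the constant offsets for $\mu$ and $\la$ differ.
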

\begin{prop} \label{reduce}
Given any NC measure $\mu \in \posncm$, let $E : \hardy (\mu + m ) \hookrightarrow \hardy$ be the contractive embedding, and let $Q_s$ be the orthogonal projection onto $\nbker E$. Then $Q_s$ is reducing for the GNS row isometry of $\mu + m$.
\end{prop}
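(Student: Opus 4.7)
The plan is to write $H := \mu + m$ and show that $\nbker E$ is invariant under both $\Pi_{H;k}$ and $\Pi_{H;k}^*$ for each $k$; this immediately yields that $Q_s = P_{\nbker E}$ commutes with each component of the GNS row isometry $\Pi_H$. The $\Pi_{H;k}$-invariance of $\nbker E$ is immediate from Lemma \ref{CTembed}: since $m \le H$, the embedding $E$ is precisely the embedding $E_m$ of that lemma, so $E \Pi_{H;k} = L_k E$ for each $k$, and $y \in \nbker E$ forces $\Pi_{H;k} y \in \nbker E$.

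The main work is the $\Pi_{H;k}^*$-invariance. The plan is to prove the vector identity
\[
\Pi_{H;k}\, E^* \tilde{F} \;=\; d_k(\tilde{F})\, E^* 1 + E^* L_k \tilde{F}, \qquad \tilde{F} \in \hardy,
\]
where $d_k(\tilde{F}) := \langle E\, \Pi_{H;k}^* \Omega_H,\, \tilde{F}\rangle_{\hardy}$ and $\Omega_H := I + N_H$. The key step is a non-commutative Douglas decomposition inside the GNS space: for a free polynomial $a = a_\emptyset I + \sum_j L_j a_j$ with $a_j \in \fp$, the identity $a + N_H = a_\emptyset \Omega_H + \sum_j \Pi_{H;j}(a_j + N_H)$ together with $\Pi_{H;k}^* \Pi_{H;j} = \delta_{kj} I$ gives
\[
\Pi_{H;k}^*(a + N_H) \;=\; a_\emptyset\, \Pi_{H;k}^* \Omega_H + (a_k + N_H).
\]
Pairing with $E^* \tilde{F}$, moving $E$ across the inner product, and using $a_\emptyset = \langle E^* 1, a + N_H\rangle_H$ together with the Fock-space identity $L_k^*\, a(L)\,1 = a_k(L)\,1$, one obtains
\[
\langle \Pi_{H;k} E^* \tilde{F},\, a + N_H\rangle_H \;=\; \langle d_k(\tilde{F})\, E^* 1 + E^* L_k \tilde{F},\, a + N_H\rangle_H;
\]
since $\fp + N_H$ is dense in $\hardy(H)$, the displayed vector identity follows.

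Both summands on the right-hand side lie in $\nbran E^*$. Consequently, for any $y \in \nbker E$ and any $\tilde{F} \in \hardy$,
\[
\langle \Pi_{H;k}^* y,\, E^* \tilde{F}\rangle_H \;=\; \langle y,\, \Pi_{H;k} E^* \tilde{F}\rangle_H \;=\; d_k(\tilde{F})\, \langle Ey,\, 1\rangle_{\hardy} + \langle Ey,\, L_k \tilde{F}\rangle_{\hardy} \;=\; 0,
\]
because $Ey = 0$. Thus $\Pi_{H;k}^* y$ is orthogonal to $\nbran E^*$ and hence lies in $(\overline{\nbran E^*})^\perp = \nbker E$. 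Combined with the $\Pi_{H;k}$-invariance from the first paragraph, $\nbker E$ is reducing for each $\Pi_{H;k}$, so $Q_s$ commutes with every component of $\Pi_H$, which is the claimed reducing property.

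The main obstacle is identifying the correct form of the vector identity for $\Pi_{H;k} E^*$: one might naively hope for $\Pi_{H;k} E^* = E^* L_k$, but the scalar correction $d_k(\tilde{F})\, E^* 1$, traceable to the term $a_\emptyset \Pi_{H;k}^* \Omega_H$ and generally non-zero, cannot be dropped. Once the correction is in place, the orthogonality argument goes through cleanly because the correction direction $E^* 1$ is itself in $\nbran E^*$, so it is annihilated by $y \in \nbker E$.
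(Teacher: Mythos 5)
Your proof is correct, but it takes a genuinely different route from the paper's. The first step (the $\Pi_{H;k}$-invariance of $\nbker E$ via the intertwining $E\Pi_{H;k}=L_kE$ from Lemma \ref{CTembed}) coincides with the paper's; the difference is in how co-invariance is obtained. The paper transfers the problem through the free Cauchy transform to the Herglotz space $\scr{H}^+(H_{\mu+m})$, identifies $\ran{Q_{ac}}$ with the closure of the intersection space $\scr{H}^+(H_{\mu+m})\cap H^2(\B^d_\N)$, and uses the function-theoretic formula (\ref{Vmuaction}): for $f$ in the intersection, $(V_{\mu+m;k}f)(Z)=Z_kf(Z)+c\,I_n$, so invariance follows because the constants lie in $H^2(\B^d_\N)$. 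You instead stay entirely in the GNS space and prove the vector identity $\Pi_{H;k}E^*\tilde F = d_k(\tilde F)E^*1+E^*L_k\tilde F$ by testing against the dense set $\fp+N_H$, using the decomposition $a=a_\emptyset I+\sum_j L_ja_j$ and the row-isometry relations $\Pi_{H;k}^*\Pi_{H;j}=\delta_{kj}I$, then conclude $\Pi_{H;k}^*\nbker E\subseteq\nbker E$ by orthogonality. The two arguments are structurally parallel: your scalar correction $d_k(\tilde F)\,E^*1$ is the GNS-side avatar of the constant $(V_{\mu+m;k}f)(0_n)$ in the paper, and the crucial membership $E^*1\in\nbran E^*$ plays exactly the role of the constants belonging to $H^2(\B^d_\N)$. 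What each buys: the paper's version sits naturally inside the Cauchy-transform framework it reuses immediately afterwards (Corollaries \ref{ACcor} and \ref{singcor}), whereas yours is more elementary and self-contained, requiring only the intertwining relation and elementary Fock-space identities such as $L_k^*a(L)1=a_k(L)1$, with no appeal to NC reproducing kernel machinery; it also makes explicit the rank-one nature of the defect $\Pi_{H;k}E^*-E^*L_k$, which is a nice by-product.
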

\begin{proof}
Since $E \Pi _{\mu +m } ^\alpha = L^\alpha E$, by Lemma \ref{CTembed}, it is easy to check that $Q_s = \nbker E^* E$ is $\Pi _{\mu +m }-$invariant. Indeed, suppose that $x \in \nbker E$ and set $\Pi _k := \Pi _{\mu + m ; k}$ and check that
$$ E \Pi _k x = L_k E x =0. $$ Hence $Q_s$ is $\Pi = \Pi _{\mu +m} -$invariant so that $Q_{ac} = I-Q_s$ is co-invariant and projects onto $\ker{E} ^\perp = \ran{E^*} ^{-\| \cdot \|}$. Let $I - \check{Q} _s := \mc{C} _{\mu +m} (I-Q_s ) \mc{C} _{\mu +m} ^*$ be the projection onto 
$$ \mc{C} _{\mu +m} \ran{E^*} ^{-\| \cdot \| } = \ran{\mr{e}} ^{-\| \cdot \| }, $$ the closure of the range of the contractive embedding,
$$ \mr{e} : H^2 (\B ^d _\N ) \hookrightarrow \scr{H} ^+ (H _{\mu +m} ), $$ in $\scr{H} ^+ (H_{\mu +m })$. Let $\mr{int} (\mu +m , m) = \nbran \mr{e}$, the intersection of the NC Hardy space with the space of Cauchy transforms of $\mu +m$, and let $$\mr{Int} _{\mu +m } (m) := \mr{int} (\mu+m , m) ^{-\| \cdot \| _{H_{\mu+m}}}, $$ the closure of the range of $\mr{e}$. It follows that $\mr{Int} _{\mu +m} (m)$ is $V_{\mu +m}$ co-invariant, and it remains to prove that it is also invariant. Given $f \in \scr{H} ^+ (H _{\mu +m}  ) \bigcap H^2 (\B ^d _\N )$, observe that by Equation \ref{Vmuaction},
\ba (V _{\mu +m; k} f ) (Z) - (V _{\mu +m ; k} f ) (0_n ) & = & Z_k f(Z) \nn \\
& = & (M^L _{Z_k} f) (Z) = (V_{m; k} f) (Z) , \nn \ea so that 
$$ (V _{\mu +m ; k} f ) (Z) = (V _{m ; k} f ) (Z) + c I_n, $$ where $c := (V_{\mu +m; k} f ) (0)$ is constant. 
Since $H^2 (\B ^d _\N )$ contains the constant functions, we conclude that $V_{\mu +m ; k} f \in H^2 (\B ^d _\N ) \bigcap \scr{H} ^+ (H _{\mu +m} )$ also belongs to the intersection space, so that the range of $I -\check{Q} _s$ is reducing for $V_{\mu +m}$, and $Q_{ac} = I-Q_s$, $Q_s$ are reducing projections for the GNS row isometry $\Pi _{\mu +m}$.
\end{proof}
\begin{cor} \label{ACcor}
Given an NC measure $\mu \in \posncm$, the following are equivalent:
\bn
    \item $\mu$ is absolutely continuous.
    \item The intersection of $\scr{H} ^+ (H _{\mu +m} )$ with the NC Hardy space $H^2 (\B ^d _\N )$ is dense in $\scr{H} ^+ (H _{\mu +m} )$. 
\en
\end{cor}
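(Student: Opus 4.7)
The plan is to unwind the definition of absolute continuity via Simon's formula from Theorem \ref{maxclosform}, and then transport the resulting condition from $\bH^2_d(\mu+m)$ to the Herglotz space $\scr{H}^+(H_{\mu+m})$ using the unitary Cauchy transform, invoking Proposition \ref{reduce} for the crucial identification.

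First, I would note that by Definition \ref{NCncld}, $\mu$ is absolutely continuous if and only if $q_\mu$ is closable, equivalently if and only if its Lebesgue singular part $q_s$ vanishes on $\mc{A}_d \times \mc{A}_d$. Applying Theorem \ref{maxclosform} to $q = q_\mu$ acting in $\cH = \bH^2_d$, the auxiliary Hilbert space $\cH(q+1)$ is the completion of $\mc{A}_d$ in the inner product $q_\mu(a_1,a_2) + \ip{a_1}{a_2}_{\bH^2_d} = (\mu+m)(a_1^*a_2)$, so $\cH(q+1) = \bH^2_d(\mu+m)$ and the canonical embedding $E$ is exactly the contraction featured in Proposition \ref{reduce}. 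Subtracting the formula of Theorem \ref{maxclosform} from $q_\mu$ (i.e.\ rewriting $q_s = q_\mu - q_{ac}$) gives
$$ q_s(a_1,a_2) = \ip{a_1 + N_{\mu+m}}{Q_s(a_2 + N_{\mu+m})}_{\mu+m}; \qquad a_1,a_2 \in \mc{A}_d. $$
Since $\mc{A}_d + N_{\mu+m}$ is dense in $\bH^2_d(\mu+m)$, we get $q_s \equiv 0$ if and only if $Q_s = 0$. Hence $\mu$ is absolutely continuous if and only if $Q_s = 0$.

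Second, I would transfer this condition to $\scr{H}^+(H_{\mu+m})$ using that $\mc{C}_{\mu+m} : \bH^2_d(\mu+m) \to \scr{H}^+(H_{\mu+m})$ is an onto isometry. Setting $\check{Q}_s := \mc{C}_{\mu+m} Q_s \mc{C}_{\mu+m}^*$, we have $Q_s = 0$ iff $\check{Q}_s = 0$. The computation carried out in the proof of Proposition \ref{reduce} already identifies $I - \check{Q}_s$ as the orthogonal projection onto $\mr{Int}_{\mu+m}(m)$, the closure in $\scr{H}^+(H_{\mu+m})$ of the range of $\mr{e} : H^2(\B^d_\N) \hookrightarrow \scr{H}^+(H_{\mu+m})$. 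Therefore $\check{Q}_s = 0$ iff $\mr{Int}_{\mu+m}(m) = \scr{H}^+(H_{\mu+m})$, which is precisely statement (2).

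I do not anticipate a genuine obstacle: the argument is purely an assembly of pieces already proved. The one step requiring minor care is the passage $q_s(a_1,a_2) = \ip{a_1}{Q_s a_2}_{\mu+m}$, which amounts to writing $q_\mu(a_1,a_2) = \ip{a_1}{a_2}_{\mu+m} - \ip{a_1}{a_2}_{\bH^2_d}$, subtracting the Simon formula for $q_{ac}$, and using $Q_s = I - Q_{ac}$. Apart from verifying this algebraic identity and checking that densities are being used correctly (so that the pointwise vanishing of $q_s$ on $\mc{A}_d$ truly forces $Q_s = 0$ as an operator on all of $\bH^2_d(\mu+m)$), the corollary is immediate from Proposition \ref{reduce}.
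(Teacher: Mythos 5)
Your argument is correct and follows essentially the same route as the paper: both reduce absolute continuity, via Simon's formula (Equation \ref{qacformula}), to the statement that $Q_{ac}=I$ (equivalently $Q_s=0$), identify $Q_{ac}$ with the projection onto $\overline{\mathrm{Ran}\,E^*}$, and then transport this to $\scr{H}^+(H_{\mu+m})$ through the unitary Cauchy transform, so that density of $\mathrm{Ran}\,\mr{e}$ is exactly statement (2). The only cosmetic difference is that you phrase the first step through vanishing of $q_s$ and the conjugated projection $\check{Q}_s$ from the proof of Proposition \ref{reduce}, while the paper works directly with $Q_{ac}$ and Lemma \ref{CTembed}; the content is identical.
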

\begin{proof}
By the formula for $q_{ac}$, Equation \ref{qacformula}, and by definition, $\mu$ is absolutely continuous if and only if $Q_{ac} = I _{\mu +m}$, the identity of $\hardy (\mu +m )$. Since $Q_{ac}$ is the orthogonal projection onto the closure of $\nbran E^*$, we have that $Q_{ac} =I$ if and only if $E^*$ has dense range. By Lemma \ref{CTembed}, $E^*$ has dense range if and only if $\mr{e} = \scr{C} _{\mu +m} E^* \scr{C} _m ^* : H^2 (\B ^d _\N ) \hookrightarrow \scr{H} ^+ (H_{\mu +m} )$ has dense range.
\end{proof}
\begin{cor} \label{singcor}
An NC measure $\mu \in \posncm$ is singular if and only if $$ \hardy (\mu +m ) = \hardy (\mu) \oplus \hardy. $$ 
\end{cor}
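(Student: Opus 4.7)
The plan is to exhibit a natural candidate for the direct-sum isomorphism and to characterize its unitarity. Define a linear map
$$ U : \bH ^2 _d(\mu + m) \to \bH ^2 _d(\mu) \oplus \bH ^2 _d $$
on the dense set of GNS vectors by $U(a + N_{\mu+m}) := (a + N_\mu) \oplus a$ for $a \in \A$. The computation
$$ \|a + N_{\mu+m}\|^2_{\mu+m} = (\mu+m)(a^*a) = \mu(a^*a) + m(a^*a) = \|a+N_\mu\|^2_{\mu} + \|a\|^2_{\bH ^2 _d} $$
shows that $U$ is well-defined and extends to a global isometry. In the block form $U = \bpm F \\ E \epm$, where $E : \bH ^2 _d(\mu + m) \to \bH ^2 _d$ is the contractive embedding of Proposition \ref{reduce} and $F : \bH ^2 _d(\mu + m) \to \bH ^2 _d(\mu)$ is the analogous embedding (Lemma \ref{CTembed}) for the domination $\mu \leq \mu + m$, the isometry identity $U^*U = I$ reads $F^*F + E^*E = I_{\bH ^2 _d(\mu+m)}$. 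The claim of the corollary is then precisely that $U$ is surjective, i.e.\ unitary.

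The crucial step is to translate singularity of $\mu$ into the operator-theoretic identity $E^*E = Q_{ac}$. Plugging the GNS vectors $\eta_a := a + N_{\mu+m}$ into Simon's explicit formula (Theorem \ref{maxclosform}) yields
$$ q_{ac}(a_1, a_2) = \ip{\eta _{a_1}}{(Q_{ac} - E^*E) \eta _{a_2}}_{\mu+m}, \qquad a_1, a_2 \in \A, $$
so that by density of $\{\eta_a\}$ in $\bH ^2 _d(\mu+m)$, the form $q_{ac}$ vanishes identically if and only if $E^*E = Q_{ac}$. The inequality $E^*E \leq Q_{ac}$ is automatic, since $Q_{ac}$ projects onto $\ov{\nbran E^*} = \nbker E ^\perp$ while $\nbran E^*E \subseteq \nbran E^*$.

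For the forward direction, assume $\mu$ is singular. Then $E^*E = Q_{ac}$, so $F^*F = I - E^*E = Q_s$, and $E^*E$, $F^*F$ are complementary orthogonal projections. In particular, $\nbran F^* \subseteq \nbran Q_s = \nbker E$, which forces $EF^* = 0$. Moreover $E|_{\nbker E ^\perp}$ is isometric (since $E^*E = Q_{ac}$ is the initial projection), so $\nbran E$ is closed; combined with $\ov{\nbran E} = \bH ^2 _d$ (because $E$ sends the dense set $\{\eta_a\}$ onto the dense set $\A \subset \bH ^2 _d$), this gives $\nbran E = \bH ^2 _d$, hence $EE^* = I_{\bH ^2 _d}$. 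The symmetric argument applied to $F$ gives $FF^* = I_{\bH ^2 _d(\mu)}$. Putting the three identities together, $UU^* = I$ and $U$ is unitary. Conversely, if $U$ is unitary then in particular $EE^* = I_{\bH ^2 _d}$, so $E^*$ is an isometry with closed range, and $E^*E$ is the orthogonal projection onto $\nbran E^* = \nbker E ^\perp = \nbran Q_{ac}$; thus $E^*E = Q_{ac}$ and $\mu$ is singular.

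The main obstacle is extracting the operator identity $E^*E = Q_{ac}$ from Simon's Lebesgue decomposition of $q_\mu$; once this is in hand, the remainder is an elementary exercise in partial-isometry calculus, enabled by the complementarity $E^*E + F^*F = I$ built into the GNS inner product for $\mu + m$.
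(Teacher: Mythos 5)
Your proof is correct and follows essentially the same route as the paper: both arguments rest on the complementary identity $E^*E + E_\mu^*E_\mu = I_{\mu+m}$ for the canonical embeddings and on the formula of Equation (\ref{qacformula}), which shows $\mu$ is singular if and only if $E^*E = Q_{ac}$, after which partial-isometry (co-isometry) considerations yield the identification $\hardy(\mu+m) \simeq \hardy(\mu)\oplus\hardy$. Your packaging of the identification as the explicit column isometry $U$ and your detailed verification of both implications simply spell out what the paper's terser proof leaves implicit.
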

\begin{proof}
Since $m , \mu \leq  \mu +m$, the embeddings $E : \hardy (\mu +m ) \hookrightarrow \hardy$ and $E_\mu : \hardy (\mu +m ) \hookrightarrow \hardy (\mu )$ are contractions with dense ranges, and $E^* E + E_\mu ^* E_\mu = I _{\mu +m}$, the identity in $\scr{L} (\hardy (\mu +m))$. By the formula of Equation \ref{qacformula}, $\mu$ is singular if and only if $E^*E = P _{\ov{\nbran E^*}} = Q_{ac}$, in which case $Q_s = I - Q_{ac} = I - E^* E = E_\mu ^* E_\mu$. It follows that $E, E_\mu$ are co-isometries onto their ranges, and we can identify $\nbran Q_{ac}$ and $\nbran Q_s$ with $\hardy$ and $\hardy (\mu )$, respectively. 
\end{proof}
\begin{thm}{ (Non-commutative Lebesgue decomposition)} \label{NCLD}
Any positive NC measure $\mu \in \posncm$ has a unique Lebesgue decomposition, 
$$ \mu = \mu _{ac} + \mu _s, $$ into positive NC measures $\mu _{ac}, \mu _s \leq \mu$. The NC measure $\mu _{ac}$ is the maximal absolutely continuous NC measure bounded above by $\mu$, and $\mu _s$ is a singular NC measure. 
\end{thm}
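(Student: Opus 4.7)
The plan is to construct $\mu_s$ first, using the reducing projection $Q_s$ of Proposition~\ref{reduce}, and then set $\mu_{ac} := \mu - \mu_s$. Extending the GNS $*$-representation $\pi_{\mu+m}$ to the Cuntz--Toeplitz algebra $\mc{E}_d$ and writing $\Omega := 1 + N_{\mu+m}$, the formula
\[ \mu_s(x) := \ip{Q_s \Omega}{\pi_{\mu+m}(x) Q_s \Omega}_{\mu+m}, \qquad x \in \mc{E}_d, \]
defines a positive vector functional on $\mc{E}_d$; its restriction to $\scr{A}_d$ is a positive NC measure $\mu_s \in \posncm$. Because $Q_s$ commutes with $\pi_{\mu+m}(a)$ for every $a \in \A$, one reads off $\mu_s(a_1^* a_2) = \ip{a_1 + N_{\mu+m}}{Q_s(a_2 + N_{\mu+m})}_{\mu+m} = q_s(a_1, a_2)$ for $a_1, a_2 \in \A$. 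Consequently $\mu_{ac} := \mu - \mu_s$ is a bounded linear functional on $\scr{A}_d$ satisfying $\mu_{ac}(a_1^* a_2) = q_{ac}(a_1, a_2) \geq 0$.

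The main obstacle is showing that $\mu_{ac}$ is positive on \emph{all} of $\scr{A}_d$, and not merely on the squares $a^* a$ with $a \in \A$. The strategy is to realize $\mu_{ac}$ itself as a vector state of a $*$-representation of $\mc{E}_d$, exploiting the $L$-Toeplitz structure of $q_{ac}$. Both $q_\mu$ and $q_s$ are $L$-Toeplitz on $\A$---for $q_s$ this is a direct consequence of the Cuntz relations $L_k^* L_j = \delta_{k,j} I$ together with the commutation of $Q_s$ with $\pi_{\mu+m}$---so $q_{ac} = q_\mu - q_s$ is a positive, closable, $L$-Toeplitz form on $\A$. By Simon's theorem its closure equals $q_T$ for a unique closed positive operator $T$ on $\bH^2_d$, with $\A$ a core for $\sqrt T$.

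Following the argument of Lemma~\ref{fpcore}, the $L$-Toeplitz property of $T$ implies that the prescription $\Pi_k \sqrt T h := \sqrt T L_k h$ extends by continuity to a row isometry $\Pi$ on $\ov{\nbran \sqrt T}$, giving a unital $*$-representation $\pi_\Pi$ of $\mc{E}_d$ with the intertwining identity $\sqrt T\, a(L) 1 = \pi_\Pi(a) \sqrt T 1$ for $a \in \A$. Consequently
\[ \mu_{ac}(a_1^* a_2) = \ip{\sqrt T 1}{\pi_\Pi(a_1^* a_2) \sqrt T 1}_{\bH^2_d}, \]
and the right-hand side defines a bounded positive vector functional $\widetilde\mu_{ac}$ on all of $\mc{E}_d$. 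A check on generators gives $\widetilde\mu_{ac}(a) = q_{ac}(1,a) = \mu(a) - \mu_s(a) = \mu_{ac}(a)$ for $a \in \A$, and $\widetilde\mu_{ac}(a^*) = \ov{\widetilde\mu_{ac}(a)} = \mu_{ac}(a^*)$; hence $\widetilde\mu_{ac}$ agrees with $\mu_{ac}$ on the norm-dense subspace $\A + \A^* \subset \scr{A}_d$ and, by continuity, on all of $\scr{A}_d$. Since $\pi_\Pi(x) \geq 0$ whenever $x \geq 0$, this yields $\mu_{ac} \in \posncm$.

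Maximality of $\mu_{ac}$, singularity of $\mu_s$, and uniqueness of the decomposition all transfer directly from the corresponding statements for the Lebesgue form decomposition in Theorem~\ref{maxclosform}: if $\mu = \nu_{ac} + \nu_s$ is any decomposition with $\nu_{ac}$ AC and $\nu_s$ singular, then $q_{\nu_{ac}} + q_{\nu_s} = q_\mu$ is another form Lebesgue decomposition, Simon's theorem forces $q_{\nu_{ac}} = q_{ac}$ and $q_{\nu_s} = q_s$, and the construction above determines $\nu_{ac}, \nu_s$ uniquely from these forms.
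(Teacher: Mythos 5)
Your existence argument is correct, but it follows a genuinely different route from the paper's. The paper defines $\mu_{ac}$ directly on $\A + \A ^*$ by $\mu _{ac}(a_1^* + a_2) := q_{ac}(a_1,1)+q_{ac}(1,a_2)$, proves boundedness from Equation (\ref{qacformula}) together with Proposition \ref{reduce}, and then proves positivity by hand: positive elements of $\scr{A}_d$ are norm limits of sums of squares $p(L)^*p(L)$ of free polynomials, and an algebraic computation writing $p(L)^*p(L) = u(L)+u(L)^*$ and using the $\Pi_{\mu+m}$-Toeplitz property of $D = Q_{ac}-E^*E$ gives $\mu_{ac}(p^*p)=q_{ac}(p,p)\geq 0$; positivity of $\mu_s:=\mu-\mu_{ac}$ then again uses the density of sums of squares. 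You instead obtain positivity twice ``for free'': $\mu_s$ is by construction a vector state of the GNS representation of $\mu+m$ compressed to the reducing subspace $\nbran Q_s$ (the identity $\mu_s(a_1^*a_2)=\ip{a_1+N_{\mu+m}}{Q_s(a_2+N_{\mu+m})}_{\mu+m}=q_s(a_1,a_2)$ does require the short computation with Equation (\ref{qacformula}) and $E^*E$, which you should record), and $\mu_{ac}$ is realized as the vector state $x\mapsto \ip{\sqrt{T}1}{\pi_\Pi(x)\sqrt{T}1}$ of the Cuntz--Toeplitz representation generated by the row isometry attached to the closure $q_T$ of the closable $L$-Toeplitz form $q_{ac}$, exactly as in the argument of Lemma \ref{fpcore}. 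Your route avoids the sums-of-squares positivity computation entirely, at the cost of invoking the universality of the Cuntz--Toeplitz algebra for row isometries and the core/intertwining bookkeeping of Lemma \ref{fpcore}; the paper's computation is more hands-on but leans on the density of sums of squares in the positive cone. Both yield $\mu=\mu_{ac}+\mu_s$ with $\mu_{ac},\mu_s\in\posncm$ and $\mu_{ac},\mu_s\leq\mu$.

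The final paragraph, however, overclaims. Theorem \ref{maxclosform} produces the canonical decomposition in which $q_{ac}$ is the \emph{maximal} closable form bounded above by $q_\mu$; it does not say that an arbitrary splitting of $q_\mu$ into a closable part plus a singular part must coincide with this one, and Lebesgue-type decompositions of quadratic forms into closable and singular summands need not be unique in general (the canonical one is singled out by maximality, not by uniqueness of such splittings). So the step ``Simon's theorem forces $q_{\nu_{ac}}=q_{ac}$ and $q_{\nu_s}=q_s$'' is unjustified: from $q_{\nu_{ac}}$ closable and $q_{\nu_{ac}}\leq q_\mu$ you only get $q_{\nu_{ac}}\leq q_{ac}$. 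The uniqueness asserted in the theorem should be read through the maximality clause, and that is easy to supply: if $\nu\in\posncm$ is absolutely continuous and $\nu\leq\mu$, then $q_\nu$ is closable and $q_\nu\leq q_\mu$, hence $q_\nu\leq q_{ac}=q_{\mu_{ac}}$ by maximality, and since positive elements of $\scr{A}_d$ are norm limits of sums of squares of free polynomials this gives $\nu\leq\mu_{ac}$; thus $\mu_{ac}$ is the maximal absolutely continuous NC measure dominated by $\mu$, which determines the decomposition. (The paper's own proof leaves maximality at essentially this level of detail.) If you want uniqueness among \emph{all} decompositions into an absolutely continuous plus a singular NC measure, a further argument beyond Simon's theorem is needed.
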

\begin{proof}
Consider the Lebesgue form decomposition $q_\mu = q_{ac} + q_s$ of the positive semi-definite $L-$Toeplitz form, $q_\mu$, with dense form domain $\A \subset \hardy$.
Given $q_{ac}$, we first define a self-adjoint linear functional $\mu _{ac} : \mc{A} _d + \mc{A} _d ^* \rightarrow \C$ by
$$ \mu _{ac} (a_1 ^* + a _2 ) := q_{ac} (a_1, 1) + q_{ac} (1, a_2) ; \quad \quad a_1 \in \mc{A} _d, \ a_2 \in (\mc{A} _d)_0,  $$ where $(\A )_0$ denotes the elements of $\A$ which vanish at $0$, $a_2 (0) =0$. Here, note that we do not take the norm closure of $\mc{A} _d + \mc{A} _d ^*$, so that this is not an operator system. The fact that $\mu _{ac}$ is self-adjoint follows as $q_{ac}$ is positive semi-definite, hence symmetric: Given $a \in \A$, 
$$ \mu _{ac} (a) = q_{ac} \left( 1, (a - a(0) 1) \right) + q_{ac} (\ov{a(0)} 1,1)  = q _{ac} (1, a), $$ and 
$$ \ov{\mu _{ac} (a^*) } = \ov{q_{ac} (a,1)} = q_{ac} (1, a) = \mu _{ac} (a), $$ since $q _{ac}$ is a symmetric sesquilinear form. Observe that $\mu _{ac}$ is bounded on $\A + \A ^*$: Given $a_1 \in \A$, $a_2 \in (\A )_0$, applying the formula of Equation \ref{qacformula} for $q_{ac}$, and using that $Q_{ac}$ commutes with the GNS representation of $\mu +m$ by Proposition \ref{reduce} yields
\ba | \mu _{ac} (a_1 + a_2 ^* ) | & = & \left|  q_{ac} (a_1 , 1) + q_{ac} (1, a_2) \right| \nn \\
& = & \left| \ip{Q_{ac} (I + N _{\mu +m})}{\pi _{\mu +m} (a_1 ^* + a_2 ) Q_{ac} (I + N_{\mu +m} )}_{\mu +m} - \ip{1}{(a_1 (L) ^* + a_2 (L) 1}_{\bH ^2 _d} \right| \nn \\
& \leq & \| \pi _{\mu +m} (a_1 ^* + a_2 ) \| \| Q _{ac} (I + N_{\mu +m} ) \| ^2 + \| a_1 (L) ^* + a_2 (L) \| \nn \\
& \leq & \| a_1 (L) ^* + a_2 (L) \| ( 2 + \mu (I) ). \nn \ea In the above we have used that $\pi _{\mu +m}$ is a unital $*-$representation of the Cuntz-Toeplitz $C^*-$algebra (the unital $C^*-$algebra generated by the left free shifts), and is hence completely positive and completely contractive. This proves that $\mu _{ac}$ is bounded on its domain and hence extends by continuity to a bounded linear functional on the free disk system $\scr{A} _d = (\A + \A ^* ) ^{-\| \cdot \|}$. 

It remains to prove that $\mu _{ac}$ is a positive linear functional.  By \cite[Lemma 4.6]{JM-freeAC}, any positive element of $\scr{A} _d$ is the norm-limit of sums of squares of free polynomials $p_n \in \fp$, and so it suffices to check that $\mu _{ac} (p(L) ^* p(L) )$ is positive semi-definite for any free polynomial $p$. Given any free polynomial of homogeneous degree $N$, 
$$ p(L) := \sum _{|\alpha | \leq N} p_\alpha L^\alpha; \quad \quad p_\alpha \in \C, $$ one can compute that
\ba p (L) ^* p (L) & = & \sum _{|\alpha | \leq N } \sum _{|\beta | \leq N} \ov{p} _\alpha p _\beta (L ^\alpha ) ^* L^\beta  \nn \\
& = & \underbrace{\sum _{\substack{|\alpha |, |\ga | \leq N; \\ \ga \neq \emptyset}} \ov{p_\alpha} p_{\alpha \ga}  L^\ga  + I _{\bH ^2}\cdot \frac{1}{2} \sum _{|\alpha | \leq N} |p_\alpha | ^2}_{=: u(L)} \ + \ u(L) ^*, \label{pu} \ea where $u \in \fp$ has homogeneous degree at most $N$. Then calculate that
\ba \mu _{ac} (p(L) ^* p(L) ) & = & \mu _{ac} (u(L) ^* + u(L) ) \nn \\
& = & q_{ac} ( u(L) , 1 ) + q _{ac} (1 , u (L) ) \nn \\
& = & 2 \nbre \ip{I + N _{\mu +m}}{(Q_{ac} - E^* E) \, (u(L) + N_{\mu +m})}_{\mu+m}. \nn \ea   
Let $\Pi := \Pi _{\mu +m}$. Since $Q_{ac}$ commutes with $\Pi $ by Proposition \ref{reduce}, and $E$ intertwines $\Pi$ with $L$ by Lemma \ref{CTembed}, \emph{i.e.} $E \Pi _{ j} = L_j E$, it follows that the positive semi-definite contraction $D := Q_{ac} - E^* E$ is $\Pi = \Pi _{\mu +m } -$Toeplitz in the sense that
$$ \Pi _{k} ^* D \Pi _{j} = \delta _{k,j} D; \quad \quad 1 \leq k ,j \leq d. $$
This $\Pi -$Toeplitz property allows us to reverse the steps in Equation (\ref{pu}) to conclude that 
$$ u(\Pi  ) ^* D + D u(\Pi ) = p (\Pi ) ^* D p (\Pi ).$$ 
It then follows that
\ba & & 2 \mr{Re} \, \ip{I + N _{\mu +m}}{(Q_{ac} - E^* E) \, (u(L) + N_{\mu +m}) }_{\mu+m} \nn \\ 
& = & \ip{p(L) + N_{\mu +m}}{(Q_{ac} - E^* E) \ (p(L) + N_{\mu +m}) }_{\mu +m} 
 =  q_{ac} (p, p) \geq 0, \nn \ea since $q_{ac}$ is a positive semi-definite sesquilinear form.
Hence $\mu _{ac} \in \posncm$, and by construction 
$$ \mu (p(L) ^* p(L) ) - \mu _{ac} (p(L) ^* p(L) ) = q_{\mu} (p,p) - q_{ac} (p, p) \geq 0. $$ Again, since sums of squares of free polynomials are norm dense in the cone of positive elements in $\scr{A} _d$, $\mu _s := \mu -\mu _{ac} \in \posncm$ and $\mu = \mu _{ac} + \mu _s$. By definition, $\mu _{ac}$ is then an absolutely continuous NC measure, and $\mu _s$ is a singular NC measure.
\end{proof}
\begin{thm} \label{ACiffclosed}
Let $\mu \in \posncm$ be an absolutely continuous NC measure. The closure of the positive semi-definite $L-$Toeplitz form, $q_\mu$, is the sesquilinear form of a unique closed, positive semi-definite $L-$Toeplitz operator, $T$, and $\A$ is a core for $\sqrt{T}$.
\end{thm}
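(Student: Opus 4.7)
The plan is to invoke the standard Riesz-type representation theorem for closed positive semi-definite quadratic forms, then promote the form-level $L$-Toeplitz identity to the operator level by a core-approximation argument.

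First, since $\mu$ is absolutely continuous, Definition \ref{NCncld} says exactly that $q_\mu$ is closable. Let $\ov{q_\mu}$ denote its minimal closed extension. By Kato's representation theorem (recalled in Subsection \ref{Closedforms}), there is a unique closed, positive semi-definite operator $T$ densely-defined in $\bH ^2 _d$ with
\[ \nbdom \ov{q_\mu} = \nbdom \sqrt{T}, \qquad \ov{q_\mu}(h,g) = \ip{\sqrt{T} h}{\sqrt{T} g}_{\bH ^2_d}. \]
The fact that $\A = \nbdom q_\mu$ is a form core for $\ov{q_\mu}$ is automatic from closability, and by the equivalence noted in Subsection \ref{Closedforms}, this is the same as saying $\A$ is a core for $\sqrt{T}$.

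Next I would check the $L$-Toeplitz identity directly on $\A$. For $a_1, a_2 \in \A$ and $1 \leq j,k \leq d$,
\[ q_\mu (L_j a_1, L_k a_2) = \mu\bigl( a_1^* L_j^* L_k a_2 \bigr) = \delta_{j,k}\, \mu (a_1^* a_2) = \delta_{j,k}\, q_\mu (a_1, a_2), \]
using only that $\mu$ is linear on the free disk system and that the left free shifts have pairwise orthogonal ranges. Thus $q_\mu$ is an $L$-Toeplitz form on its domain $\A$.

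To promote this to the closure, the key step is to verify that $\nbdom \sqrt{T}$ is $L$-invariant. Given $h \in \nbdom \sqrt{T}$, choose by the core property a sequence $a_n \in \A$ with $a_n \to h$ and $\sqrt{T} a_n \to \sqrt{T} h$. For each $j$, $L_j a_n \in \A \subseteq \nbdom \sqrt{T}$ and $L_j a_n \to L_j h$. Applying the $L$-Toeplitz identity on $\A$ to $a_n - a_m$ gives
\[ \bigl\| \sqrt{T}(L_j a_n - L_j a_m) \bigr\|^2 = q_\mu(a_n-a_m, a_n-a_m) = \bigl\| \sqrt{T}(a_n - a_m) \bigr\|^2, \]
so $(\sqrt{T} L_j a_n)$ is Cauchy. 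Closedness of $\sqrt{T}$ then forces $L_j h \in \nbdom \sqrt{T}$, which gives $L$-invariance of $\nbdom \sqrt{T}$. The $L$-Toeplitz identity for $q_T = \ov{q_\mu}$ now follows by polarization and continuity from the identity on $\A$, completing the verification that $T$ is $L$-Toeplitz in the sense of Definition \ref{LToepdef}. Uniqueness of $T$ is already part of Kato's representation theorem.

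The only mildly delicate point is the extension of $L$-invariance from $\A$ to $\nbdom \sqrt{T}$, but this is handled cleanly by the isometric identity above combined with closedness of $\sqrt{T}$; there is no analytic obstacle beyond checking that the approximating sequences behave consistently. Everything else is a direct application of the abstract form theory already summarized in Subsection \ref{Closedforms}.
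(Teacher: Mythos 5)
Your proof is correct and takes essentially the same route as the paper: absolute continuity gives closability by definition, Kato's representation theorem yields the unique closed positive semi-definite operator $T$ representing $\ov{q_\mu}$, and the form-core/operator-core equivalence gives that $\A$ is a core for $\sqrt{T}$. The additional detail you supply to verify that $T$ is $L$-Toeplitz — the $L$-invariance of $\nbdom \sqrt{T}$ via the isometric identity on differences together with closedness of $\sqrt{T}$, then passing the Toeplitz identity to the closure — is precisely the argument the paper records in Lemma \ref{fpcore}, which its own terse proof of this theorem leaves implicit.
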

\begin{proof}
If $\mu$ is AC, then by definition, $q_\mu$, with form domain $\nbdom q_{\mu} = \mc{A} _d$ is AC, \emph{i.e.} closable. Hence the form closure $\ov{q _\mu} = q_T$ is the positive semi-definite form of a unique closed, positive semi-definite $L-$Toeplitz operator $T\geq 0$, and by construction, $\mc{A} _d$ is a form-core for $q_T$, hence a core for $\sqrt{T}$. 
\end{proof}

\section{The Radon-Nikodym formula for NC measures} \label{NCRN}
Consider $\mu = \mu _B$ where $B \in \scr{L} _d = [ \mult ] _1$ as before. By Theorem~\ref{NCLD}, $\mu$ has an NC Lebesgue decomposition 
\[
\mu=\mu_{ac}+\mu_s, 
\]
and by Theorem~\ref{ACiffclosed} and Lemma \ref{fpcore}, there is a unique closed, densely-defined, positive $L$-Toeplitz operator $T$ such that $\A\subset \mr{Dom} \sqrt{T} $,
\be
\mu_{ac}(a_2^*a_1) = \langle \sqrt{T}a_1, \sqrt{T}a_2\rangle_{\hardy} = q_T (a_1 , a_2 ); \quad \quad a_1, a_2 \in \mc{A} _d,
\ee
$\fp \subseteq \mr{Dom} \sqrt{T}$ is a core for $\sqrt{T}$, and $\mr{Dom} \sqrt{T}$ is $L-$invariant. The operator $T$ can be thought of as the NC Radon-Nikodym derivative of $\mu$ with respect to NC Lebesgue measure, $m$.
Our goal is to recover $T$ from the function $B$, or, more precisely, from the net of scaled right multipliers $B_r(R):=B(rR)$, $0<r<1$.  For each $0<r<1$ we form the (bounded, positive, invertible, $L-$Toeplitz) operator $T_r$:
 \begin{align}
 T_r &:= \nbre H_B (rR)  > 0 \nn \\
 &= (I-B(rR)^* ) ^{-1} \left( I- B(rR)^* B(rR) \right) (I - B(rR) ) ^{-1}. \label{Trform} 
 \end{align}
\begin{remark} \label{strictcon}
Here, $B(L) \in [L^\infty _d ] _1$ is a contraction, and so $B(rL)$ is a strict contraction, 
$$ \| B (rL) \| = \sup _{Z \in \B ^d _\N } \| B (rZ) \|  = \sup _{Z \in r \B ^d _\N} \| B(Z) \| <1, $$ unless $B(Z) = \beta I_n$, $\beta \in \partial \D$, is identically constant, by \cite[Theorem 3.1]{SSS} and the NC maximum modulus principle of \cite[Lemma 6.11]{SSS}. Hence, $B(rR) = U_\mrt B(rL) U_\mrt$ is also a strict contraction for any $0<r<1$.
\end{remark}
 We observe that in one variable, the quadratic form induced by the Toeplitz operator $T_r$ is 
\[
\langle T_r f, g\rangle_{H^2} = \int_{\partial \D } f(\zeta)\overline{g(\zeta)} \frac{1-|b(r\zeta)|^2}{|1-b(r\zeta)|^2}\, m(d \zeta); \quad \quad  f, g\in H^2,
\]
and by Fatou's theorem, we have that the Radon-Nikodym derivative of $\mu_b$ is: 
\[
\frac{\mu_b (d\zeta )}{m (d\zeta )} =  \frac{1-|b(\zeta)|^2}{|1-b(\zeta)|^2} =\lim_{r\uparrow 1}  \frac{1-|b(r\zeta)|^2}{|1-b(r\zeta)|^2}.
\]
Thus $T$ can be recovered from the $B_r$ by forming the Toeplitz operators $T_r$ and taking a.e. limits of the symbols, so that $T$ is the densely defined Toeplitz operator with $L^1$ symbol \be \frac{1-|b(\zeta)|^2}{|1-b(\zeta)|^2}; \quad \quad \zeta \in \partial \D. \label{symbol} \ee

In the non-commutative setting, we seek to recover the `NC Radon-Nikodym derivative', $T$, from its radial values, $T_r$, by re-casting the original Fatou Theorem in purely operator-theoretic or functional analytic terms. One technical hurdle is that while the operators $T_r$ are bounded, $T$ generically is not. It turns out that the notion of {\em strong resolvent convergence} (\emph{i.e.} strong operator topology convergence of resolvents) of positive semi-definite operators is appropriate.  This notion is defined in Subsection~\ref{subsec:SR-def}. The main goal of this section is to prove:

\begin{thm}[Non-commutative Fatou Theorem] \label{main}
Let $\mu = \mu _B \in \posncm$ be the NC Clark measure of $B \in [\mult ] _1$ with NC Lebesgue decomposition $\mu = \mu _{ac} + \mu _s$. If $T \geq 0$ is the positive semi-definite $L-$Toeplitz operator so that $q_T = \ov{q_{\mu_{ac}}}$, and $T_r := \nbre H_B (rR)$, $0<r<1$, as above then  $T_r$ converges to $T$ in the strong resolvent sense as $r \uparrow 1$. 
\end{thm}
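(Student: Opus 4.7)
My approach is to deduce strong resolvent convergence $T_r \to T$ from pointwise form convergence $q_{T_r} \to q_\mu$ on the dense form core $\A$, together with the principle (in the spirit of Simon's Theorem~\ref{maxclosform}) that the strong resolvent limit of bounded positive operators picks out the maximal closable part of the limiting form.

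The first step is an explicit computation of matrix elements. Starting from the Taylor--MacLaurin expansion in Equation~(\ref{HergTTseries}),
$$
H_B(rR) \,=\, i \nbim H_B(0)\,I \,+\, \mu(I)\,I \,+\, 2\sum_{\alpha \neq \emptyset} r^{|\alpha|}\, R^\alpha\, \mu(L^{\alpha^{\mrt}})^{*},
$$
I take real parts to obtain
$$
T_r \,=\, \mu(I)\,I \,+\, \sum_{\alpha \neq \emptyset} r^{|\alpha|}\bigl[\mu(L^{\alpha^{\mrt}})^* R^\alpha \,+\, \mu(L^{\alpha^{\mrt}})\,(R^\alpha)^*\bigr].
$$
Using the commutation $R^\gamma L^\beta 1 = L^{\beta \gamma^{\mrt}} 1$ and carefully tracking the conjugate-linearity of the inner product, a direct computation on the orthonormal basis vectors $e_\beta, e_\gamma$ identifies $\ip{T_r e_\beta}{e_\gamma}$ as a sum of moments $r^{|\delta|}\mu(L^\delta)$ and $r^{|\delta|}\mu(L^\delta)^*$ whose Abel limit equals $\mu((L^\beta)^* L^\gamma)$. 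Extending by norm-density of free polynomials in $\A$ then yields
$$
\lim_{r \uparrow 1}\, \ip{T_r a_1}{a_2}_{\bH ^2 _d} \,=\, \mu(a_1^* a_2) \,=\, q_\mu(a_1, a_2) \qquad (a_1, a_2 \in \A).
$$

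The second step upgrades form convergence to strong resolvent convergence. By Theorems~\ref{NCLD} and \ref{ACiffclosed}, $q_\mu = q_{ac} + q_s$ with $q_T = \ov{q_{ac}}$ the closure of the maximal closable part. The net $\{(T_r + \eps I)^{-1}\}_{0<r<1}$ is uniformly bounded by $\eps^{-1}$ in operator norm, so it has weak operator accumulation points; I would identify any such $R_\infty$ with $(T + \eps I)^{-1}$ by testing on $\A$ via the form convergence of Step~1 and then exploiting the form-theoretic characterization of $T$ (namely that $\sqrt{T}\A$ is dense in $\ov{\nbran \sqrt{T}}$ because $\A$ is a core for $\sqrt{T}$) to pin down the resolvent action. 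Upgrading from WOT to SOT for a uniformly bounded net of positive operators whose diagonal quadratic forms converge is then routine.

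The main obstacle is this last identification, since the family $q_{T_r}$ is not monotone in $r$---even classically, where $T_r$ is multiplication by the Poisson integral $P_r[\mu]$---so Simon's monotone convergence theorem does not apply verbatim. To handle this, I would either (a) verify Mosco convergence $q_{T_r} \to q_T$, using a recovery sequence from $\A$ and lower semicontinuity of the closed form $q_T$ to absorb the singular form $q_s$ along weakly convergent sequences, or (b) exploit the factorization $T_r = W_r^* W_r$ with $W_r := (I - B(rR)^* B(rR))^{1/2} (I - B(rR))^{-1}$ (bounded since $B(rR)$ is a strict contraction by Remark~\ref{strictcon}), identify a densely-defined closed operator $W$ via the Cayley-transform structure so that $T = W^* W$ and $\sqrt{T} = |W|$, and deduce strong resolvent convergence from convergence of $W_r$ on a common core. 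Either route ultimately articulates the key phenomenon that the singular form $q_s$ is invisible in the strong resolvent limit---the operator-theoretic content of the non-commutative Fatou theorem.
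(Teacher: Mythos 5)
Your Step 1 is fine (it is essentially Lemma \ref{rapprox}: $\ip{a_1}{T_r a_2}_{\hardy} = \mu_r(a_1^*a_2) \rightarrow \mu(a_1^*a_2)$, and the extension from polynomials to $\A$ works because $|\mu_r(a_1^*a_2)| \leq \|a_1\|\,\|a_2\|\,\mu(I)$ uniformly in $r$), but it only shows that the forms converge to $q_\mu$ on $\A$ — and the whole content of the theorem is that the strong resolvent limit is governed by $q_T = \ov{q_{\mu_{ac}}}$, \emph{not} by $q_\mu$. Your Step 2 is where that content has to be supplied, and as written it is a genuine gap: you defer it to two routes, neither of which works as sketched. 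For route (a), the recovery-sequence half of Mosco convergence is exactly the obstruction: the natural choice $h_r \equiv a \in \A$ gives $\lim_r q_{T_r}(a,a) = q_\mu(a,a) = q_{ac}(a,a)+q_s(a,a) \geq q_T(a,a)$, with strict inequality whenever $q_s(a,a)>0$, so one must construct sequences that ``dodge'' the singular part; producing such sequences is tantamount to proving the theorem, and lower semicontinuity of $q_T$ (which helps only in the $\liminf$ direction) does not absorb $q_s$. For route (b), you presuppose the identification $T = W^*W$ with $W$ the formal limit of $W_r = (I-B(rR)^*B(rR))^{1/2}(I-B(rR))^{-1}$; but this is precisely the identity the paper cannot establish in general — Theorem \ref{FatouForm} proves only the inequality $(I-B(R)^*)(I+T)(I-B(R)) \leq 2(I-\nbre B(R))$, and equality is shown to hinge on an NC factorization $2I - B(R)^* - B(R) = G(R)^*G(R)$ that is open. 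Moreover the claimed convergence of $W_r$ on a common core is itself obstructed: for $h = (I-B(R))g$ one has $(I-B(rR))^{-1}h = g + (I-B(rR))^{-1}(B(rR)-B(R))g$, and $\|(I-B(rR))^{-1}\|$ blows up as $r \uparrow 1$ exactly when $\mu$ has a singular part, so the error term is not controlled.

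For comparison, the paper's proof supplies the missing mechanism by a two-sided squeeze on resolvents. Upper bound: it extends Popescu's outer factorization to the unbounded setting, writing $I+T = x(R)^*x(R)$ with $x(R)$ right-outer affiliated to $R^\infty_d$ (Theorem \ref{tfactor}), and proves the harmonic majorant inequality $x(rR)^*x(rR) \leq I + T_r$ (Lemma \ref{subharlem}, via the Schwarz inequality for the completely positive maps $\Phi_r$); by Kato's anti-monotonicity of resolvents this gives $\Delta_r \leq x(rR)^{-1}x(rR)^{-*}$, and the right side converges SOT to $(I+T)^{-1}$. Lower bound: any WOT limit point $\delta$ of $\Delta_r$ is identified, via Popescu's bounded factorization $I+T_r = y^{(r)}(R)^*y^{(r)}(R)$ with $y^{(r)}(R)^{-1} = c_r M^R_{\Delta_r 1}$, as $\delta = y(R)^{-1}y(R)^{-*}$ for a right multiplier; hence $q_{T'}$ (with $T'=\delta^{-1}-I$) is the form of a vector state dominated by $q_{\mu}+q_m$, in particular \emph{closable}, and Simon's maximality of $q_{ac}$ forces $q_{T'} \leq q_T$. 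This closability-plus-maximality step is the precise reason the singular part is invisible in the limit, and it has no counterpart in your proposal. If you want to salvage your outline, you need either an actual construction of Mosco recovery sequences avoiding $q_s$, or the factorization machinery the paper builds; without one of these the identification of the resolvent limit with $(T+\eps I)^{-1}$ is unsupported.
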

Using this theorem (and without appeal to Fatou's original result) it is not too difficult to show, in the one variable case, that $T$ is the (possibly unbounded) Toeplitz operator with the symbol given above in Equation (\ref{symbol}),
as of course must be the case.

\subsection{Radial approximation of NC Herglotz functions}

Let $B \in \scr{L} _d = [H^\infty (\B ^d _\N ) ] _1$ be a contractive NC function in the NC unit ball, and let $B^{\mr{t}} \in \scr{R} _d$ be its transpose-conjugate in the right free Schur class.  For any $0<r<1$, define  $$ B _r (Z) := B (rZ), \quad  B_r (R) = B(rR) = M^R _{B^\mrt _r}, \quad \mbox{and} \quad \mu _r := \mu _{B_r} \in \posncm. $$
\begin{lemma} \label{sotstar}
The net $B(rR)$ converges $SOT-*$ to $B(R)$ as $r \uparrow 1$. 
\end{lemma}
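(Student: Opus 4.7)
My plan is to reduce $SOT$-$*$ convergence to two separate $SOT$ convergence statements (for $B(rR)$ and $B(rR)^*$), and then in each case combine uniform boundedness with convergence on a dense subspace. The uniform bound is immediate: since $B \in \scr{L}_d$, the transpose $B^{\mathrm{t}} \in \scr{R}_d$ is a contractive right multiplier, and by \cite[Theorem 3.1]{SSS} the right multiplier norm equals the supremum norm in $\B^d_\N$, so $\|B(rR)\| = \sup_{Z\in r\B^d_\N}\|B(Z)\| \le \|B(R)\| \le 1$ for every $0<r<1$.

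For $SOT$ convergence of $B(rR) \to B(R)$, I would first check convergence on the vacuum vector $1 \in \bH^2_d$. Writing the Taylor–MacLaurin series of $B^{\mathrm{t}}$ (which is left Herglotz conjugate to $B$) as $B^{\mathrm{t}}(Z) = \sum_{\alpha \in \F^d} c_\alpha Z^\alpha$, we have $B(rR) 1 = B^{\mathrm{t}}_r \in \bH^2_d$, the element with Taylor coefficients $r^{|\alpha|} c_\alpha$, whereas $B(R) 1 = B^{\mathrm{t}}$ has coefficients $c_\alpha$. Since $\sum |c_\alpha|^2 = \|B(R)1\|^2 \le 1$, dominated convergence gives
\[
\|B(rR)1 - B(R)1\|^2 = \sum_{\alpha\in\F^d} (1-r^{|\alpha|})^2 |c_\alpha|^2 \xrightarrow{r\uparrow 1} 0.
\]
Since right multipliers commute with all left operators, for any $p(L) \in \mc{A}_d$ we get
$B(rR) p(L) 1 - B(R) p(L) 1 = p(L)\bigl(B(rR)1 - B(R)1\bigr) \to 0$ in norm. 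The set $\{p(L)1 : p \in \fp\}$ is dense in $\bH^2_d$, so combining dense convergence with the uniform bound $\|B(rR)\|\le 1$ yields $B(rR)\to B(R)$ in $SOT$.

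For the adjoints, I would exploit the explicit formula recalled in Section~\ref{rkhs}: for $G \in (\mult)^{\mathrm{t}}$ with Taylor expansion $G(Z) = \sum_\alpha Z^\alpha G_\alpha$, the adjoint of $M^R_G$ acts on coefficient evaluation vectors $K_\alpha = e_\alpha$ by the \emph{finite} sum $(M^R_G)^* K_\alpha = \sum_{\gamma\beta = \alpha} K_\gamma G_\beta^*$. Applying this with $G = B^{\mathrm{t}}_r$ and $G = B^{\mathrm{t}}$ gives
\[
\bigl(B(rR)^* - B(R)^*\bigr) K_\alpha = \sum_{\gamma\beta = \alpha} K_\gamma \bigl(r^{|\beta|} - 1\bigr) c_\beta^*,
\]
and since this is a finite sum with each $r^{|\beta|}\to 1$, the norm tends to $0$ as $r\uparrow 1$. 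The span $\bigvee_\alpha K_\alpha$ equals the span of the $e_\alpha$, which is dense in $\bH^2_d$; combining with the uniform bound $\|B(rR)^*\| = \|B(rR)\| \le 1$ gives $B(rR)^* \to B(R)^*$ in $SOT$ as well.

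There is no substantive obstacle here; the only thing to be careful about is the unambiguous use of the transpose involution relating left and right multipliers and the formula for $(M^R_G)^*$ on coefficient vectors, both of which are recorded in Section~\ref{rkhs}. Together, the two $SOT$ limits give $SOT$-$*$ convergence $B(rR) \to B(R)$, as claimed.
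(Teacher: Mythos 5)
Your proof is correct, and its overall skeleton is the same as the paper's: establish the uniform bound $\|B(rR)\|\leq 1$ and then verify convergence on a dense set, separately for $B(rR)$ and for $B(rR)^*$. The execution differs in the dense sets used and in self-containedness. The paper handles the $SOT$ half by citing the standard radial-dilation argument (Shalit's treatment of the Drury--Arveson/Fock setting), and proves the adjoint half on the NC point-evaluation kernel vectors, using $(M^R_G)^*K\{Z,y,v\}=K\{Z,y,G(Z)v\}$ together with pointwise convergence $B^{\mr{t}}(rZ)\to B^{\mr{t}}(Z)$ and density of the kernels. You instead work entirely on the monomial (coefficient-vector) basis: for $B(rR)$ you get norm convergence on the vacuum by dominated convergence of Taylor coefficients and then propagate it to $p(L)1$ using that right multipliers commute with $p(L)$; for the adjoints you use the finite-sum formula $(M^R_G)^*K_\alpha=\sum_{\ga\beta=\alpha}K_\ga G_\beta^*$. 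This buys a completely self-contained, elementary proof at the cost of being slightly more coordinate-bound, while the paper's kernel-vector computation is the more intrinsic NC-RKHS argument and dovetails with how $WOT$/pointwise convergence is exploited later (Lemma \ref{weakmultconv}); both are equally rigorous. One terminological slip: $B^{\mr{t}}$ is the transpose-conjugate of $B$ in the right free Schur class, not a ``left Herglotz conjugate''; this does not affect your computation.
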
 
\begin{proof} 
One can show, as in \cite[Theorem 3.5.5]{Sha2013}, that $B_r (R) \stackrel{SOT}{\rightarrow} B(R)$. In fact, $B_r (R) \stackrel{SOT-*}{\rightarrow} B(R) = M^R _{B^{\mr{t}} (Z)}$, since for any kernel function $K \{Z , y ,v \}$, 
\ba & & \| (B_r (R) - B (R) ) ^*  K \{ Z , y , v \} \| ^2  =   \| K \{ Z , y , (B^{\mr{t}} (rZ) - B ^{\mr{t}} (Z) ) v \} \| ^2 \nn \\ & = & \ipcn{y}{K(Z,Z)[(B^{\mr{t}} (rZ) - B ^{\mr{t}} (Z) )vv^* (B ^{\mr{t}} (rZ) - B ^{\mr{t}} (Z) ) ^*] y} \rightarrow 0. \nn \ea
Using that the set of all kernel functions is dense in $\hardy$, and that the net $B_r (R)$ is uniformly norm-bounded, it follows that $B(rR) ^*$ converges $SOT$ to $B(R) ^*$.
\end{proof}

\begin{lemma} \label{rapprox}
    $\mu _r = \mu _{B_r}$ converges pointwise to $\mu _B$ as $r \uparrow 1$. That is, for any $a_1, a_2 \in \A$, $\mu _r (a_1 ^* a_2) \rightarrow \mu _B (a_1 ^* a_2)$. 
\end{lemma}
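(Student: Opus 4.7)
The plan is to reduce the pointwise convergence on $\A \subset \scr{A}_d$ to convergence on free monomials, where the key identity is essentially immediate from the relation $H_{B_r}(Z) = H_B(rZ)$, and then extend successively by linearity, the Cuntz--Toeplitz relations, and finally by uniform norm bounds plus density of free polynomials in $\A$.

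The core observation is a scaling at the level of Taylor coefficients. Since $H_{B_r}(Z) = H_B(rZ)$, comparing the Taylor--Maclaurin series expansion \eqref{HergTTseries} for $H_B$ and $H_{B_r}$ term by term yields
\[
\mu_r(L^\beta) = r^{|\beta|}\, \mu_B(L^\beta) \quad \text{for } \beta \neq \emptyset,
\]
while $\mu_r(I) = \mu_B(I)$ because $H_{B_r}(0) = H_B(0)$. Consequently $\mu_r(L^\beta) \to \mu_B(L^\beta)$ for every $\beta \in \F^d$.

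To lift this to products $(L^\alpha)^* L^\beta$, I would invoke the Cuntz--Toeplitz isometry relations $L_k^* L_j = \delta_{k,j} I$. They collapse any such product to one of $L^\gamma$, $(L^\gamma)^*$, or $0$ according to whether $\alpha$ is a prefix of $\beta$, $\beta$ is a prefix of $\alpha$, or neither. Hence the pointwise convergence passes from monomials to products of monomials, and then by linearity to $\mu_r(p_1^* p_2) \to \mu_B(p_1^* p_2)$ for all free polynomials $p_1, p_2 \in \fp$.

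The remaining step is to extend from polynomials to arbitrary $a_1, a_2 \in \A$. Here I would use that $\mu_r$ and $\mu_B$ are positive linear functionals on the free disk system, so their operator norms as linear functionals coincide with $\mu_r(I) = \mu_B(I)$ and are uniform in $r$. Given $a_1, a_2 \in \A$, choose polynomials $p_j \to a_j$ in operator norm; the elementary estimate $\|a_1^* a_2 - p_1^* p_2\| \leq \|a_1\|\,\|a_2 - p_2\| + \|a_1 - p_1\|\,\|p_2\|$ together with the uniform bound and a standard three-$\epsilon$ argument yields the conclusion. The proof is largely mechanical once the Taylor-coefficient scaling is in hand; I do not anticipate a genuine obstacle, only the mild bookkeeping of ensuring the correct operator norms are used throughout the final approximation.
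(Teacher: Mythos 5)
Your proposal is correct and follows essentially the same route as the paper: the paper's (very terse) proof likewise rests on the Taylor--MacLaurin coefficient formula \eqref{HergTTseries} applied to $H_{B_r}(Z)=H_B(rZ)$, which is exactly your scaling identity $\mu_r(L^\beta)=r^{|\beta|}\mu_B(L^\beta)$, $\mu_r(I)=\mu_B(I)$. The remaining steps you spell out (collapsing $(L^\alpha)^*L^\beta$ via the isometry relations, and the density-plus-uniform-bound argument using $\|\mu_r\|=\mu_r(I)=\mu_B(I)$) are the routine details the paper leaves implicit, and they are carried out correctly.
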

\begin{proof}
This follows from the formula for the NC Clark measure of Definition \ref{NCClarkdef}, and that $H_B ^{\mr{t}} (rZ)$ converges pointwise to $H_B ^{\mr{t}} (Z)$ as $r \uparrow 1$. (Or, equivalently, one can apply the Taylor-Taylor series formula for $H_B (rZ)$ of Equation \ref{HergTTseries}.)
\end{proof}

\subsection{Strong resolvent convergence of self-adjoint operators}\label{subsec:SR-def}

It will be useful to recall some basic facts about convergence of unbounded self-adjoint operators. Our main reference for this is \cite[Chapter VIII.7]{RnS1}:
\begin{defn}
Let $A _n , A$ be closed, self-adjoint operators. The sequence $A _n$ converges to $A$ in the strong resolvent (SR) sense if $(A _n -\la I ) ^{-1} \rightarrow (A  - \la I ) ^{-1}$ in the strong operator topology (SOT) for any $\la \in \C \sm \R$. 
\end{defn}
To show strong resolvent convergence it suffices to check $SOT$ convergence of the resolvents at any fixed point $\la \in \C \sm \R$:
\begin{thm}{ (\cite[Theorem VIII.19]{RnS1})}
Let $A _n , A $ be self-adjoint operators. Then $A _n \stackrel{SR}{\rightarrow} A$ if and only if there is a $\la _0 \in \C \sm \R$ so that $(A _n - \la _0 I ) ^{-1} \stackrel{SOT}{\rightarrow} (A -\la _0 I ) ^{-1}$.
\end{thm}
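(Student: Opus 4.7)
My plan is to handle the two directions separately. The forward direction is immediate: if $A_n \to A$ in the strong resolvent sense, then by definition $(A_n - \lambda I)^{-1} \to (A - \lambda I)^{-1}$ SOT for every $\lambda \in \C \setminus \R$, so in particular at any chosen $\lambda_0$. All the real work is in the converse.

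For the converse, suppose $R_n(\lambda_0) := (A_n - \lambda_0 I)^{-1} \to R(\lambda_0) := (A - \lambda_0 I)^{-1}$ SOT for some $\lambda_0$ with, say, $\mathrm{Im}(\lambda_0) > 0$. The central observation is the uniform self-adjoint resolvent bound $\|R_n(\lambda)\| \leq |\mathrm{Im}(\lambda)|^{-1}$, which together with the first resolvent identity gives the Neumann-type expansion
\[
R_n(\lambda) = \sum_{k=0}^{\infty} (\lambda - \lambda_0)^k \, R_n(\lambda_0)^{k+1},
\]
converging in operator norm, uniformly in $n$, on the open disk $D_0 := \{\lambda : |\lambda - \lambda_0| < |\mathrm{Im}(\lambda_0)|\} \subset \C_+$. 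For each fixed $k$, the product $R_n(\lambda_0)^{k+1}$ converges SOT to $R(\lambda_0)^{k+1}$, because products of SOT-convergent uniformly bounded nets converge SOT to the product of the limits. A uniform tail estimate then upgrades this termwise convergence to $R_n(\lambda) \to R(\lambda)$ SOT for every $\lambda \in D_0$.

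Next I would propagate this convergence to all of $\C_+$ by a connectedness argument: any $\lambda_1 \in \C_+$ is joined to $\lambda_0$ by a compact path inside $\C_+$, which is covered by finitely many disks $D_j := \{\lambda : |\lambda - \mu_j| < |\mathrm{Im}(\mu_j)|\}$ with consecutive $\mu_j$ lying in a common such disk. Iterating the Neumann-series argument with new base points $\mu_j$ delivers SOT convergence on each $D_j$, and hence at $\lambda_1$. To cross into the lower half-plane, I would exploit normality: each $R_n(\lambda)$ is normal (as the inverse of the normal operator $A_n - \lambda I$) and the family $\{R_n(\lambda)\}$ is uniformly norm-bounded on compact subsets. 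For a uniformly bounded net of normal operators, SOT convergence to a normal limit automatically implies SOT-$*$ convergence: from $\|R_n(\lambda) f\| = \|R_n(\lambda)^* f\|$ one gets $\|R_n(\lambda)^* f\| \to \|R(\lambda)^* f\|$, and since $R_n(\lambda)^* f \to R(\lambda)^* f$ weakly (a direct dual pairing), weak convergence plus convergence of norms in a Hilbert space upgrades to strong convergence. Applied at $\lambda_0$, this yields $R_n(\bar\lambda_0) \to R(\bar\lambda_0)$ SOT, providing a foothold in $\C_-$ from which the disk-plus-connectedness argument can be rerun.

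The main technical hurdle is the SOT-to-SOT-$*$ upgrade for normal operators, which combines the normality identity for norms with the weak-plus-norm-equals-strong principle in Hilbert space. Everything else is a routine resolvent-identity bootstrap. An alternative route would replace the normality step with a Vitali-theorem argument applied to the bounded analytic scalar functions $\lambda \mapsto \langle g, R_n(\lambda) f\rangle$ on $\C_+$ and $\C_-$ separately, promoting pointwise weak convergence to SOT via the identity $\|(R_n(\lambda) - R(\lambda))f\|^2 = \langle f, (R_n(\bar\lambda) - R(\bar\lambda))(R_n(\lambda) - R(\lambda)) f\rangle$ expanded by first resolvent identities; I find the Neumann-series route cleaner.
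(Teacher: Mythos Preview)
The paper does not prove this statement; it is quoted verbatim as background from Reed and Simon \cite[Theorem VIII.19]{RnS1} with no accompanying argument, so there is no in-paper proof to compare against.

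Your sketch is correct and is essentially the standard proof. The Neumann-series expansion of the resolvent about $\lambda_0$, with the uniform bound $\|R_n(\lambda_0)\|\leq |\mathrm{Im}\,\lambda_0|^{-1}$, does give SOT convergence on the disk $|\lambda-\lambda_0|<|\mathrm{Im}\,\lambda_0|$; re-centering and covering a compact path by finitely many such disks propagates this to all of the half-plane containing $\lambda_0$, since along any compact path in $\C_+$ the disk radii are bounded below. Your normality argument for crossing the real axis is also valid: $R_n(\lambda_0)$ is normal, so $\|R_n(\lambda_0)^*f\|=\|R_n(\lambda_0)f\|\to\|R(\lambda_0)f\|=\|R(\lambda_0)^*f\|$, while $R_n(\lambda_0)^*f\to R(\lambda_0)^*f$ weakly (adjoints of a WOT-convergent bounded net), and weak convergence together with convergence of norms gives strong convergence in Hilbert space. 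This yields $R_n(\bar\lambda_0)\to R(\bar\lambda_0)$ SOT, after which the disk argument runs again in $\C_-$. The alternative Vitali-type route you mention also works, but your Neumann-series approach matches the spirit of the Reed--Simon treatment.
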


\begin{remark}
    We will be primarily interested in positive semi-definite $A _n , A$, in which case one can instead choose $\la _0 \in (-\infty , 0 )$. In particular, it suffices to show $(I+A _n ) ^{-1} \stackrel{SOT}{\rightarrow} (I + A ) ^ {-1}$.
\end{remark}

\subsection{Strong resolvent convergence of positive free harmonic functions}
Recall that we have defined
$$ 
T_r =(I-B(rR)^* ) ^{-1} \left( I- B(rR)^* B(rR) \right) (I - B(rR) ) ^{-1} > 0; \quad  0<r<1. $$ 
To investigate the strong resolvent convergence of the $T_r$, we define $\Delta _r (\eps ) := ( \eps I +T _r ) ^{-1}$ for any $\eps >0$. In particular, if $\Delta _r := \Delta _r (1)$, then 
$$ \Delta _r  =  (I + T_r ) ^{-1} = \frac{1}{2} (I - B(rR) ) \left( I - \nbre B(rR) \right) ^{-1} (I - B(rR )^*), $$ Since the spectrum, $\sigma (I + T_r ) \subseteq [1, \infty )$, it follows that $0 \leq \Delta _r \leq I$ for any $0<r<1$. Also note that $r \mapsto \Delta _r$ is $SOT-$continuous for $r \in (0,1)$.  
 
Our strategy will be to prove that if $\Delta (\eps ) := (\eps I +T ) ^{-1}$, where $T \geq 0$ is the closed, positive semi-definite NC Radon-Nikodym derivative of $\mu = \mu _B$ with respect to NC Lebesgue measure, \emph{i.e.} $\ov{q_{\mu _{ac}}} = q_T$, then 
$$ \Delta _r (\eps ) \stackrel{WOT}{\rightarrow} \Delta (\eps ) = (\eps I +T) ^{-1}, $$ where $WOT$ denotes weak operator topology. Using the resolvent formula, it will then follow that $\Delta _r = \Delta _r (1) \stackrel{SOT}{\rightarrow} \Delta$, so that $T_r$ converges to $T$ in the strong resolvent sense.

\subsection{Factorization of unbounded $L-$Toeplitz operators}

Any strictly positive $L-$Toeplitz operator which is bounded above and below has an analytic outer factorization:
\begin{thm}{ (Popescu \cite[Theorem 1.5]{Pop-entropy})} \label{Popfactor}
Any positive $L-$Toeplitz $T \in \scr{L} (\hardy )$ which is bounded below, $T \geq \eps I$ can be factored as:
$T = F (R) ^* F (R)$ for some outer $F (R) \in R^\infty _d $.  
\end{thm}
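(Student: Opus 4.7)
The plan is to obtain the factorization via a Szeg\H{o}-type argument built on Popescu's multivariable Wold decomposition for pure row isometries \cite{Pop-dil}, exploiting that strict positivity $T \geq \eps I$ makes $T$ a bounded similarity. First, the $L$-Toeplitz identity $L_k^* T L_j = \delta_{k,j} T$ immediately implies that the left free shift $L=(L_1,\ldots,L_d)$ remains a row isometry when $\hardy$ is equipped with the equivalent inner product $\ip{x}{y}_T := \ip{Tx}{y}_{\hardy}$; indeed,
$$ \ip{L_j x}{L_k y}_T = \ip{x}{L_j^* T L_k y}_{\hardy} = \delta_{j,k}\ip{Tx}{y}_{\hardy} = \delta_{j,k} \ip{x}{y}_T. $$
Because $\eps I \leq T \leq \|T\|I$, the $T$-norm is equivalent to the standard Fock-space norm, so $L$ is still a \emph{pure} row isometry with respect to $\ip{\cdot}{\cdot}_T$.

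Next, I would identify the wandering subspace $\mc{W}_T$, the orthogonal complement of $\ran{L}$ inside $(\hardy, \ip{\cdot}{\cdot}_T)$. Since $\ran{L} = \hardy \ominus \C \cdot 1$ with respect to the standard inner product, a direct calculation using self-adjointness of $T$ yields
$$ \mc{W}_T = \{y \in \hardy \ |\ Ty \in \C \cdot 1\} = \C \cdot T^{-1} 1, $$
which is one-dimensional because $T$ is invertible. Setting $w := T^{-1}1/\ip{1}{T^{-1}1}_{\hardy}^{1/2}$, Popescu's Wold decomposition then guarantees that $\{L^\alpha w\}_{\alpha \in \F^d}$ is an orthonormal basis of $\hardy$ with respect to $\ip{\cdot}{\cdot}_T$.

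Define a bounded linear operator $U$ on $\hardy$ (equipped with its standard inner product) by $U e_\alpha := L^\alpha w$, and extend by continuity. The orthonormality of $\{L^\alpha w\}$ in the $T$-inner product is precisely the operator identity $U^* T U = I_{\hardy}$, equivalently, $T = (U^{-1})^* U^{-1}$; while the definition of $U$ gives $U L_k = L_k U$ for each $k$. Norm-equivalence ensures that $U$ is bounded above and below, and hence invertible on $\hardy$. Since the commutant of the row isometry $L$ coincides with $R^\infty_d$ (a standard fact, see e.g.\ \cite{Pop-entropy, DP-inv}), both $U$ and $U^{-1}$ belong to $R^\infty_d$. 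Setting $F(R) := U^{-1} \in R^\infty_d$ then gives $T = F(R)^* F(R)$, and $F(R)$ is outer since, being invertible on $\hardy$, its range is all of $\hardy$.

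The principal technical hurdle is verifying the one-dimensionality of $\mc{W}_T$; it is precisely here that both the $L$-Toeplitz identity and the invertibility guaranteed by $T \geq \eps I$ play jointly essential roles. The remaining ingredients --- Popescu's Wold decomposition and the identification $\{L_k\}' = R^\infty_d$ --- are standard and may be invoked as black boxes.
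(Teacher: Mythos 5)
Your argument is correct. Each step checks out: the $L$-Toeplitz identity makes $L$ a row isometry for the equivalent inner product $\ip{x}{y}_T=\ip{Tx}{y}$; since $\bigvee_{|\alpha|=k}L^\alpha\hardy$ is the same closed subspace in either (equivalent) norm, the intersection over $k$ is still $\{0\}$, so $L$ remains pure; the $T$-wandering space is $\{y: Ty\in\C 1\}=\C\, T^{-1}1$, one-dimensional by invertibility; the induced operator $U$ with $Ue_\alpha=L^\alpha w$ is bounded above and below, satisfies $U^*TU=I$ and $UL_k=L_kU$, hence $U,U^{-1}\in\{L\}'=R^\infty_d$ by Davidson--Pitts, and $F(R):=U^{-1}$ is outer (indeed surjective) with $T=F(R)^*F(R)$.

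For comparison: the paper does not prove this statement at all -- it is quoted from Popescu -- so there is no in-paper proof to match; what the paper does prove is the unbounded generalization, Theorem \ref{tfactor}, and there the strategy is the same in spirit but must be executed differently. In your bounded setting, norm equivalence gives purity of the renormed row isometry for free and lets you read off the wandering vector as $T^{-1}1$; when $T$ is merely closed and densely defined these shortcuts fail, and the paper instead works with $X=\sqrt{I+\tau}$, the intertwined row isometry $\Pi_kXy=XL_ky$, and a separate argument (Theorem \ref{NoCuntz}) that a cyclic row isometry admitting a closed, surjective, bounded-below intertwiner with $L$ has no Cuntz part, together with the machinery of affiliated right Smirnov multipliers to land in $R^\infty_d$. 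It is worth noting that your explicit wandering vector $w\propto T^{-1}1$, equivalently $U=M^R_{\varphi}$ with $\varphi\propto T^{-1}1$, is exactly the bounded-case analogue of the formula $\psi_\eps\propto(\eps I+\tau)^{-1}1$ that the paper later establishes for the outer factor, so your construction also recovers the explicit symbol, not just existence.
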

Here, recall that $F(R) \in R^\infty _d$ is called outer if it has dense range.
The goal of this subsection is to extend this factorization result to closed, unbounded $L-$Toeplitz operators. 
\begin{thm} \label{tfactor}
Let $\tau \geq 0$ be a closed, positive semi-definite $L-$Toeplitz operator so that $\A$ is a core for $\sqrt{\tau}$. Then, for any fixed $\eps >0$ there is a right-outer $x _{[\eps ]} (R) \sim R^\infty _d$, $\nbdom x _{[\eps ]} (R)  = \nbdom \sqrt{\tau }$ so that $x _{[\eps ]} (R) ^* x _{[\eps ]} (R) = \eps I + \tau$, and $x _{[\eps ]} ^{\mrt}  = x _{[\eps ]} (R) 1 \in \hardy$. The closed right multiplier $x_{[\eps ]} (R)$ is unique up to a unimodular constant.  
\end{thm}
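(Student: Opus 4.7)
The plan is to adapt Popescu's bounded-case factorization (Theorem \ref{Popfactor}) to the unbounded setting by constructing a row isometry that ``twists'' $L$ by the unbounded $\sqrt{T}$. First, I pass to $T := \eps I + \tau$, which is a closed, positive $L$-Toeplitz operator with $\mc{A} _d$ a core for $\sqrt{T}$ (since $\nbdom \sqrt{T} = \nbdom \sqrt{\tau}$) and is now bounded below by $\eps I > 0$. Consequently $\sqrt{T}$ is an injective self-adjoint operator with bounded inverse $W := \sqrt{T} ^{-1}$, $\| W \| \leq \eps ^{-1/2}$, and $\sqrt{T}$ is a bijection from $\nbdom \sqrt{T}$ onto $\hardy$.

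Following the argument used in the proof of Lemma \ref{fpcore}, I define bounded operators $\Pi _k := \sqrt{T} L _k \sqrt{T} ^{-1}$ on $\hardy$. The $L$-Toeplitz identity for $T$ is exactly what is needed to verify $\Pi _k ^* \Pi _j = \delta _{k,j} I$, so $\Pi = ( \Pi _1 , \ldots , \Pi _d)$ is a bounded row isometry with two intertwining identities $W \Pi _k = L _k W$ and $\Pi _k ^* W = W L _k ^*$. The next step is to show that $\Pi$ is a pure row isometry with one-dimensional wandering subspace $\mathcal{W} _\Pi$ spanned by $v _0 := W \cdot 1 = \sqrt{T} ^{-1} 1$. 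Purity is direct: any $h \in \bigcap _n \bigoplus _{|\alpha | = n} \Pi ^\alpha \hardy$ admits a representation $h = \sqrt{T} g _n$ with $g _n \in \nbdom \sqrt{T}$ lying in the Fock subspace $\hardy _{\geq n}$ of words of length at least $n$; injectivity of $\sqrt{T}$ forces $g _n = W h$ independent of $n$, and $\bigcap _n \hardy _{\geq n} = \{ 0 \}$ yields $h = 0$. The inclusion $\C v _0 \subseteq \mathcal{W} _\Pi$ is the computation $\Pi _k ^* v _0 = W L _k ^* \sqrt{T} v _0 = W L _k ^* \cdot 1 = 0$. The reverse inclusion requires first establishing $\mathcal{W} _\Pi \subseteq \nbdom \sqrt{T}$ (via a careful extension argument combining the bounded identity $\Pi _k ^* W = W L _k ^*$ with the density of $\nbdom \sqrt{T}$ and the $L$-invariance of this domain), after which the formula $\Pi _k ^* h = W L _k ^* \sqrt{T} h$ on $\nbdom \sqrt{T}$ forces $\sqrt{T} h \in \bigcap _k \ker L _k ^* = \C \cdot 1$ and hence $h \in \C v _0$.

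By Popescu's Wold decomposition for pure row isometries of multiplicity one, there is a unique unitary $U : \hardy \to \hardy$ satisfying $U \Pi _k = L _k U$ for all $k$ and $U v _0 / \| v _0 \| = 1$. Define $x _{[\eps]} (R) := U \sqrt{T}$ with $\nbdom x _{[\eps]} (R) := \nbdom \sqrt{T} = \nbdom \sqrt{\tau}$. Then $x _{[\eps]} (R) ^* x _{[\eps]} (R) = \sqrt{T} U ^* U \sqrt{T} = T = \eps I + \tau$; the commutation $x _{[\eps]} (R) L _k = L _k x _{[\eps]} (R)$ on $\nbdom \sqrt{T}$ follows from $U \Pi _k = L _k U$ combined with $\Pi _k \sqrt{T} = \sqrt{T} L _k$, so $x _{[\eps]} (R) \sim R ^\infty _d$; the symbol $x _{[\eps]} ^{\mrt} := x _{[\eps]} (R) \cdot 1 = U \sqrt{T} \cdot 1$ lies in $\hardy$ because $1 \in \mc{A} _d \subseteq \nbdom \sqrt{T}$; and right-outerness follows from $\nbran \sqrt{T} = \hardy$ together with unitarity of $U$, so $\nbran x _{[\eps]} (R) = \hardy$. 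For uniqueness, a second factorization $y (R) \sim R ^\infty _d$ with the same properties gives $V := y (R) \, x _{[\eps]} (R) ^{-1}$ a bounded unitary on $\hardy$ commuting with $L$, hence $V \in ( L ^\infty _d ) ' = R ^\infty _d$ is a unitary right multiplier of the Fock space; such a multiplier must be simultaneously inner and outer, which in the free Fock setting forces $V = c I$ for some $c \in \partial \D$, giving $y (R) = c \, x _{[\eps]} (R)$.

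The main technical hurdle is verifying $\mathcal{W} _\Pi \subseteq \nbdom \sqrt{T}$ (equivalently, $\dim \mathcal{W} _\Pi = 1$) in the unbounded setting. In Popescu's bounded case this is automatic because $\nbdom \sqrt{T} = \hardy$ and the formula $\Pi _k ^* = \sqrt{T} ^{-1} L _k ^* \sqrt{T}$ is globally valid, while here one must carefully extend the bounded identity $\Pi _k ^* W = W L _k ^*$ from $\nbran W = \nbdom \sqrt{T}$ to deduce that wandering vectors of $\Pi$ already lie in this dense subspace. Once this is in hand, every remaining assertion reduces to a short computation in the row-isometric framework.
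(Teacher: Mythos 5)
Your overall architecture coincides with the paper's: reduce to $T=\eps I+\tau$, use the $L$-Toeplitz property and the $L$-invariance of $\nbdom \sqrt{T}$ (Lemma \ref{fpcore}) to define the row isometry $\Pi _k = \sqrt{T} L_k \sqrt{T}^{-1}$, prove $\Pi \simeq L$, set $x_{[\eps ]} (R) = U\sqrt{T}$, and obtain uniqueness from the fact that a unitary right multiplier of $\hardy$ is a unimodular scalar (the paper cites \cite[Corollary 1.5]{DP-inv} for this). Your purity argument is correct and in fact simpler than the paper's: the paper proves a more general statement (Theorem \ref{NoCuntz}, for closed, surjective, bounded-below intertwiners) by a weak-compactness argument, whereas in your special case $\sqrt{T}$ is boundedly invertible, so the identification $g_n = \sqrt{T}^{-1}h$, which must lie in the closed span of $\{ e_\alpha : |\alpha | \geq n \}$ for every $n$, immediately gives $h=0$.

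The genuine gap is the step you yourself flag: the inclusion $\mc{W}_\Pi \subseteq \nbdom \sqrt{T}$ (equivalently, $\dim \mc{W}_\Pi =1$) is asserted to follow from ``a careful extension argument'' using $\Pi _k ^* W = W L_k ^*$, density and $L$-invariance of $\nbdom \sqrt{T}$, but no argument is given, and those ingredients alone do not suffice: $\nbdom \sqrt{T}$ need not be invariant under the backward shifts $L_k ^*$, so a general domain vector cannot be decomposed as a constant plus $\sum _k L_k (\mbox{domain})$, and the identity $\Pi _k ^* W = W L_k ^*$ only controls $\Pi _k ^*$ on $\nbran W = \nbdom \sqrt{T}$, while a putative wandering vector is a priori an arbitrary element of $\hardy$. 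This is precisely where the unbounded case differs from Popescu's Theorem \ref{Popfactor}, and it is where the hypothesis that $\A$ (hence $\fp$, by Lemma \ref{fpcore}) is a \emph{core} for $\sqrt{T}$ must enter --- your sketch never invokes the core property, and the statement fails without it. The paper closes this step differently: since $\fp$ is a core, $\sqrt{T}1$ is cyclic for $\Pi$ (approximate $\sqrt{T}g$ in graph norm by $\sqrt{T} p_n (L) 1 = p_n (\Pi ) \sqrt{T} 1$), and a cyclic pure row isometry has one-dimensional wandering space (Theorem \ref{NoCuntz}). Alternatively, your route can be completed with the core property: for $h \in \mc{W}_\Pi$ and $p \in \fp$, writing $p = p_\emptyset 1 + \sum _k L_k q_k$ gives $\ip{h}{\sqrt{T} p} = \ov{p_\emptyset} \, \ip{h}{\sqrt{T}1}$ because $\sqrt{T} L_k q_k = \Pi _k \sqrt{T} q_k \perp h$; since $\fp$ is a core for $\sqrt{T}$, the functional $x \mapsto \ip{h}{\sqrt{T}x}$ is then bounded on all of $\nbdom \sqrt{T}$, so $h \in \nbdom (\sqrt{T}) ^* = \nbdom \sqrt{T}$, and then $\ip{\sqrt{T}h}{L_k q} = \ip{h}{\Pi _k \sqrt{T} q} = 0$ for all $q \in \fp$ forces $\sqrt{T} h \in \C 1$, i.e. $h \in \C \, \sqrt{T}^{-1}1$. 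As written, however, the proposal leaves this decisive step unproved.
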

In the above, recall that the notation $x(R) \sim R^\infty _d$ means that $x(R)$ is a potentially unbounded, closed right multiplier affiliated to the operator algebra $R^\infty _d$, see Example \ref{ACeg}. That is, $\nbdom x(R)$ is $L-$invariant and $x(R) L_k h = L_k x(R) h$ for any $h \in \nbdom x(R)$. Any such unbounded multiplier can be expressed as $x(R) = B(R) A(R) ^{-1}$ with $B(R) , A(R) \in R^\infty _d$ bounded right multipliers so that $A(R)$ is NC outer, \emph{i.e.} has dense range \cite{JM-freeSmirnov}. The above theorem is essentially \cite[Theorem 1.1, Corollary 1.4]{Pop-entropy}, without the assumption that $\tau $ is bounded. We pause to observe that in one variable this result is a familiar consequence of the theory of outer functions: If $h$ is a non-negative $L^1$ function on the circle, (think of this as the symbol of $\tau $), then $1+h$ is log-integrable on the unit circle $\partial \D  $, hence there is an $H^2$ outer function $g$ such that $1+h=|g|^2$ almost everywhere on $\partial \D$ \cite[Chapter IV]{Hoff}, or, (ignoring technicalities about domains)
\[
I+T_h =T_{1+h} =T_g^*T_g. 
 \]  (Of course $h$ itself need not be log-integrable and hence need not factor.) 
\begin{defn}
Let $\Pi$ be a row isometry on a separable Hilbert space $\cH$. A closed, densely-defined operator $X : \nbdom X \subseteq \hardy \rightarrow \cH$ is called an \emph{intertwiner} if $\nbdom X$ is $L-$invariant and 
$$ X L_k x = \Pi _k X x; \quad \quad x \in \nbdom X. $$
\end{defn}
\begin{thm} \label{NoCuntz}
Let $\Pi$ be a cyclic row isometry on $\cH$. Suppose $X : \nbdom X \subseteq \hardy \rightarrow \cH$ is an intertwiner which is densely-defined, closed, surjective, and bounded below. Then $\Pi$ is unitarily equivalent to $L$.  
\end{thm}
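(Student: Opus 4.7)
The plan is to invert $X$ and then apply Popescu's non-commutative Wold decomposition to the row isometry $\Pi$. First, I would observe that since $X$ is closed, densely-defined, surjective, and bounded below, its inverse $Y := X^{-1}$ is a bounded, injective operator from $\cH$ into $\hardy$ with range exactly $\nbdom X$ (hence dense in $\hardy$), and $XY = I_\cH$. The intertwining $X L_k \supseteq \Pi_k X$ transforms into the everywhere-defined identity $L_k Y = Y \Pi_k$ on all of $\cH$, which iterates to $L^\alpha Y = Y \Pi^\alpha$ for every $\alpha \in \F^d$.

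Next, I would invoke Popescu's Wold decomposition to split $\cH = \cH_s \oplus \cH_u$ into reducing subspaces for $\Pi$, where $\Pi|_{\cH_u}$ is a Cuntz row unitary (i.e., $\sum_k \Pi_k \Pi_k^* = I_{\cH_u}$) and $\Pi|_{\cH_s}$ is unitarily equivalent to $L \otimes I_{\cH_0}$, with wandering subspace $\cH_0 := (I - \sum_k \Pi_k \Pi_k^*)\cH$. The first main step is to show $\cH_u = \{0\}$. Given $h \in \cH_u$, the Cuntz identity gives $h = \sum_{|\alpha|=N} \Pi^\alpha \Pi^{*\alpha} h$ for every $N$, and applying $Y$ yields
\[
Yh = \sum_{|\alpha|=N} L^\alpha Y\Pi^{*\alpha}h \in \bigvee_{|\alpha|=N} L^\alpha \hardy.
\]
Since $\bigvee_{|\alpha|=N} L^\alpha \hardy$ is the orthogonal complement of $\mr{span}\{L^\beta 1 : |\beta|<N\}$, these subspaces intersect trivially as $N\to\infty$. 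Thus $Yh = 0$, and injectivity of $Y$ forces $h = 0$.

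With $\cH_u = \{0\}$, the operator $\Pi$ is now of pure-shift type, and it remains to show the wandering multiplicity $\dim \cH_0$ equals $1$. Letting $P_0$ denote the projection onto $\cH_0 = \ker \Pi^*$, I note that $P_0 \Pi^\alpha = 0$ for every non-empty word $\alpha$, since $\Pi^\alpha \cH \subseteq \sum_k \Pi_k \cH = \cH \ominus \cH_0$. Applying $P_0$ to a cyclic vector $v$ then gives $P_0 \mr{span}\{\Pi^\alpha v : \alpha \in \F^d\} = \C\cdot P_0 v$; by cyclicity this one-dimensional subspace is dense in $P_0\cH$, forcing $\dim \cH_0 \leq 1$. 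Since $\hardy \neq \{0\}$ and $X$ is bounded below on a dense domain, $\cH \neq \{0\}$, and $\cH_u = \{0\}$ then gives $\cH_0 \neq \{0\}$; hence $\dim \cH_0 = 1$ and $\Pi \cong L$.

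The most conceptually delicate step is eliminating $\cH_u$, since it crucially uses both the boundedness of $Y$ (to push a Cuntz-expansion across an unbounded intertwiner) and the pure-isometry property $\bigcap_N \bigvee_{|\alpha|=N} L^\alpha \hardy = \{0\}$ of the free shift; the cyclicity/wandering argument is slick once the Cuntz part is dead. The remaining verifications (intertwining of $Y$, the reducing property of the Wold summands) are routine.
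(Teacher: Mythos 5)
Your proof is correct, and it reaches the conclusion by a somewhat different mechanism than the paper in the one step that matters, the elimination of the Cuntz part. You invert $X$ (legitimate: closed, bounded below and surjective gives a bounded, injective inverse $Y=X^{-1}$ with dense range $\nbdom X$), convert the intertwining into the everywhere-defined relation $L_k Y = Y\Pi_k$, and then kill the Cuntz summand of the Wold decomposition by pushing the iterated Cuntz identity $h=\sum_{|\alpha|=N}\Pi^\alpha\Pi^{*\alpha}h$ through $Y$ to land in $\bigcap_N\bigvee_{|\alpha|=N}L^\alpha\hardy=\{0\}$, so injectivity of $Y$ gives $h=0$. The paper instead works directly with the unbounded $X$: it identifies the Cuntz space as $\bigcap_k\bigvee_{|\alpha|=k}XL^\alpha\nbdom X$, writes $h=Xg_k$ with $g_k$ in the nested (non-closed) spans $\mc{M}_k=\bigvee_{|\alpha|=k}L^\alpha\nbdom X$, uses the bounded-below hypothesis to get a uniform bound on the $g_k$, extracts a weak limit which must be $0$ since $\bigcap\ov{\mc{M}_k}=\{0\}$, and then pairs against the dense subspace $\nbdom X^*$ to conclude $h=0$. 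Both arguments rest on the same two pillars -- purity of the free shift and the bounded-below/surjectivity of $X$ -- and both finish identically, with cyclicity forcing the wandering subspace to be one-dimensional (your projection argument with $P_0$ onto $\ker \Pi ^*$ just makes explicit what the paper asserts in one line). What your route buys is a cleaner argument with no unbounded-operator technicalities: no weak compactness, no appeal to $\nbdom X^*$, and no care needed about closures of the spans $X\mc{M}_k$; what the paper's route reflects is a direct extension of the bounded intertwiner argument of Davidson--Li--Pitts to the unbounded setting, which generalizes to situations where one prefers not to (or cannot) invert $X$ globally.
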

Any $X$ satisfying the above theorem has a bounded inverse, by the open mapping theorem. This result is essentially \cite[Theorem 2.8]{DLP-ncld}, without the assumption that $X$ is bounded. Our proof follows the same lines, though the unbounded case requires some care. 
\begin{proof}
The Cuntz part of $\Pi$ is supported on:
\ba \mc{K} &:=& \bigcap _{k=1} ^\infty \left( \bigvee _{|\alpha | = k } \nbran \Pi ^\alpha \right) \nn \\
& = & \bigcap _{k=1} ^\infty \left( \bigvee _{|\alpha | = k } \Pi ^\alpha X \nbdom X \right) \quad \quad \mbox{(by surjectivity of $X$)} \nn \\
& = &  \bigcap _{k=1} ^\infty \left( \bigvee _{|\alpha | = k } X L ^\alpha \nbdom X \right) \nn 
\ea

We now prove that the space $ \bigcap _{k=1} ^\infty \left( \bigvee _{|\alpha | = k } X L ^\alpha \nbdom X \right)$ is $\{0\}$.  Let
\[
\mathcal M_k = \bigvee _{|\alpha|=k} L^\alpha \nbdom X, \quad k=1, 2, \dots
\]
(here we are taking just the linear span, not its closure).   We have $\mathcal M_k\subset \nbdom X$ for each $k$, and 
\[
\bigcap_{k=1}^\infty \overline{\mathcal M_k} \subset\bigcap_{k=1}^\infty \left( \bigvee _{|\alpha|=k} L^\alpha \hardy \right) =\{0\}. 
\]
Now suppose that $h\in\mathcal{K} = \bigcap_{k=1}^\infty X\mathcal M_k$, we want to prove $h=0$. For each $k$ there is a $g_k\in \mathcal M_k$ such that $h=Xg_k$. Since $X$ is assumed bounded below, we have
\[
\|h\|=\|Xg_k\| \geq c\|g_k\|
\]
for some absolute constant $c>0$, and all $k$. Thus the sequence $g_k$ is uniformly bounded, so there is a subsequence $(g_{n_k})$ converging weakly to some $g\in \hardy$. Since the $\mathcal M_k$ are nested, this $g$ must belong to the intersection of all the $\overline{\mathcal M_k}$, which we have just observed is $\{0\}$. Thus $g=0$. Let now $f\in \nbdom X^*$, which is a dense subspace since $X$ is closed. We have
\[
\langle h,f\rangle = \langle Xg_{n_k}, f\rangle =\langle g_{n_k}, X^*f\rangle \to 0.
\]
Since this holds for all $f\in \nbdom X^*$, we conclude $h=0$. We have shown that $\mathcal K=\{0\}$, hence $\Pi$ is of pure type$-L$, \emph{i.e.} $\Pi$ is unitarily equivalent to copies of $L$. Since $\Pi$ is cyclic, its wandering space is one-dimensional, and $\Pi$ is unitarily equivalent to $L$. 

\end{proof}

\begin{proof}[Proof of Theorem \ref{tfactor}]
Without loss in generality assume $\eps =1$ and consider $\sqrt{I + \tau  }$. This is closed, bounded below, strictly positive, and hence bijective in $\hardy$ (its range is all of $\hardy$ and it is injective on its dense domain).  Both $\sqrt{\tau }$ and $\sqrt{I+\tau }$ are (generally unbounded) closed positive semi-definite operators, and it follows from spectral theory that $\nbdom \sqrt{\tau } = \nbdom \sqrt{I+\tau }$. It is further easy to check that the norms
$$ \| h \| ^2 _{G (\sqrt{\tau })} := \| h \| ^2 + \| \sqrt{\tau } h \| ^2, \quad \mbox{and} \quad 
\| h \| ^2 _{I + \tau } := \| \sqrt{I + \tau } h \| ^2, $$ coincide on $\nbdom \sqrt{\tau }$. (Indeed, it is clear that they coincide on $\nbdom \tau $, which is a core for both $\sqrt{\tau }$ and $\sqrt{I+\tau}$.)   Set $X := \sqrt{I + \tau}$ and define a row isometry, $\Pi$, on $\nbran X = \hardy$ by the equation:
$$ \Pi _k X y := X L_k y; \quad \quad y \in \nbdom X. $$ 
As in Equation \ref{Trowisom}, $\Pi$ is a row isometry since $q_{I +\tau } = q_\tau + q_m$ (where recall $m$ denotes the Lebesgue vacuum state) is an $L-$Toeplitz form.
Recall that since $\tau$ is a closed, positive semi-definite $L-$Toeplitz operator with $\A$ a core for $\sqrt{\tau}$, that $\nbdom \sqrt{\tau}$ is $L-$invariant and contains the free polynomials as a core by Lemma \ref{fpcore}. The free disk algebra is also a core for $\sqrt{I+\tau}$: Recall that $\A$ is a core for $\sqrt{\tau}$ or $\sqrt{I+\tau}$ if and only if it is a form core for the closed forms $q_\tau$, $q_{I+\tau}$, respectively, see Subsection \ref{Closedforms}. By definition, $\A$ is a form core for $q_\tau$, or $q_{I+\tau}$ if and only if $\A$ is dense in $\cH (q_{\tau} +1)$ or $\cH (q_{I+\tau} +1)$ respectively. The norms of $\cH (q_\tau +1), \cH (q_{I+\tau} +1)$ are equivalent and therefore $\A$ is a core for $\sqrt{I+\tau}$. In conclusion, Lemma \ref{fpcore} implies that the free polynomials are a core for $\sqrt{I+\tau}$, $\nbdom \sqrt{I+\tau}$ is $L-$invariant and $X$ is a closed intertwiner. Since $\fp$ is a core for $X=\sqrt{I +\tau}$, any $h = \sqrt{I + \tau} g \in \nbran X$ is the norm-limit of vectors of the form $X p_n (L) 1$, for $p_n \in \fp$. Hence,
$$ h = \lim _{n \rightarrow \infty} p_n (\Pi ) X 1, $$ so that $X1$ is cyclic for $\Pi$.
In summary, $X : \nbdom X \rightarrow \hardy$ is a densely-defined, closed and bijective intertwiner, and $\Pi$ is a cyclic row isometry so that Theorem \ref{NoCuntz} implies that $\Pi \simeq L$ is unitarily equivalent to $L$ via a unitary $U$ on $\hardy$, $U ^* \Pi U = L$.  The operator $U^* X$ then commutes with the left free shifts: 
$$ U^* X L_k h = U^* \Pi _k X h = L_k U^* X h; \quad \quad h \in \nbdom X, $$ and it follows that $U^* X = x(R) \sim R^\infty _d$ is a right-Smirnov multiplier affiliated to the right multiplier algebra of the NC Hardy Space \cite[Corollary 4.26]{JM-freeSmirnov}, and $$ x(R) ^* x(R) = X^* X = I +\tau. $$  Since $x(R) = U^* \sqrt{I+\tau }$, the free polynomials are a core for $x(R)$, and $\nbran x(R)$ is dense in $\hardy$ so that $x(R)$ is right-outer. Since $1 \in \nbdom \sqrt{I+\tau} = \nbdom x(R)$ it follows that $x^\mrt= x(R)1 \in \hardy$ is an $L-$cyclic vector. To prove uniqueness, suppose that $y(R) = M^R _{y ^\mr{t}} \sim R^\infty _d$ is an outer right multiplier so that $x(R) ^* x(R) = I + \tau = y(R) ^* y(R)$. Since both $x(R), y(R)$ are outer (\emph{i.e.} they have dense range), $x(R)^*, y(R)^*$ are injective. Also, any $x(R)  \sim R^\infty _d$ is necessarily injective by \cite[Lemma 4.9, Corollary 4.26]{JM-freeSmirnov}. By the polar decomposition for closed operators, \cite[Proposition 5.3.18]{analysisnow}, it follows that there are unitary operators $U,V$ so that 
$$ x(R) = U \sqrt{I+\tau} , \quad \mbox{and} \quad y(R) = V \sqrt{I+\tau }, $$ and hence $x(R)= UV^* y(R)$. Then, for any $h \in \nbdom y(R)$, 
$$ \left( L_k x(R) - x(R) L_k \right) h =0 \ \Rightarrow \ (L_k UV^* - UV^* L_k ) y(R) h =0. $$ Since $y(R)$ has dense range, we conclude that $UV^*$ commutes with the left shifts so that $UV^* = W (R) \in R^\infty _d$ is a unitary right multiplier \cite[Theorem 1.2]{DP-inv}. However, the only normal elements of $R^\infty _d$ are scalar multiples of the identity, and it follows that $W (R) = \zeta I$, $\zeta \in \partial \D$ is a unitary constant \cite[Corollary 1.5]{DP-inv}.
\end{proof}
\begin{remark}
In the above, since $I+\tau \geq I$ is bounded below by $1$ (under the assumption that $\eps =1$), it follows that $x(R) ^{-1}$ is a contractive right multiplier.
\end{remark}

\begin{remark}
Theorem \ref{tfactor} can be readily extended to general positive left Toeplitz operators, $\tau >0$, such that $\nbdom \sqrt{\tau}$ contains an $L-$cyclic vector, $h \in \hardy$.
\end{remark}

\begin{lemma} \label{SRlimlem}
For any $\eps >0$, the bounded operators $x _{[\eps ]} (rR) ^{-1} x _{[\eps ]} (rR) ^{-*}$ converge in the strong operator topology to $(\eps I +\tau ) ^{-1}$ as $r\uparrow 1$. 
\end{lemma}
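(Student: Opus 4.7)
The strategy is to reduce the claim to the radial $SOT$-$*$ approximation of a bounded \emph{reciprocal} right multiplier. By Theorem \ref{tfactor}, the factorization $x_{[\eps]}(R)^* x_{[\eps]}(R) = \eps I + \tau$ forces $x_{[\eps]}(R)$ to be bounded below by $\sqrt{\eps}$, so its two-sided inverse $y(R) := x_{[\eps]}(R)^{-1}$ is a bounded right multiplier in $R^\infty _d$ with $\|y(R)\| \leq 1/\sqrt{\eps}$ and NC symbol $y^\mrt = 1/x_{[\eps]}^\mrt$. At the operator level one immediately reads off
$$ y(R) y(R)^* = \left( x_{[\eps]}(R)^* x_{[\eps]}(R) \right) ^{-1} = (\eps I + \tau)^{-1}. $$

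Next, for each $0 < r < 1$ I would identify $x_{[\eps]}(rR)^{-1}$ with the scaled bounded right multiplier $y(rR) := M^R _{y^\mrt(rZ)}$. Here the scaled symbols $y^\mrt(rZ)$ and $x_{[\eps]}^\mrt(rZ)$ are both bounded NC functions on $\B ^d _\N$ (the first since $y^\mrt$ is bounded; the second because $y^\mrt$ is NC outer and hence nonvanishing on $\B ^d _\N$, so $x_{[\eps]}^\mrt = 1/y^\mrt$ is NC holomorphic on $\B ^d _\N$ and bounded on every dilated subregion $r \B ^d _\N$ with $r<1$). The pointwise identity $y^\mrt(rZ)\, x_{[\eps]}^\mrt(rZ) = I$ then lifts to the operator identity $y(rR) x_{[\eps]}(rR) = I = x_{[\eps]}(rR) y(rR)$, so indeed $x_{[\eps]}(rR)^{-1} = y(rR)$. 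The NC von Neumann inequality also gives the uniform bound $\|y(rR)\| \leq \sup _{Z \in \B ^d _\N} \|y^\mrt(rZ)\| \leq \|y(R)\| \leq 1/\sqrt{\eps}$ on the whole net.

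The third ingredient is SOT$-*$ convergence $y(rR) \to y(R)$ as $r \uparrow 1$. This is simply the bounded-multiplier analogue of Lemma \ref{sotstar}: the adjoint action on any NC Szeg\"o kernel vector is
$$ (y(rR) - y(R))^* K\{Z,y,v\} = K\{Z, y, (y^\mrt(rZ)^* - y^\mrt(Z)^*) v\}, $$
whose norm tends to $0$ by the CPNC kernel formula and pointwise continuity of the NC function $y^\mrt$; density of kernel vectors combined with the uniform norm bound above then promotes this to $SOT$- and $SOT$-$*$-convergence.

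Finally, since $y(rR) \to y(R)$ in $SOT$-$*$ with $\|y(rR)\|$ uniformly bounded, joint $SOT$-continuity of multiplication on norm-bounded sets yields
$$ x_{[\eps]}(rR)^{-1} x_{[\eps]}(rR)^{-*} = y(rR) y(rR)^* \ \xrightarrow{SOT} \ y(R) y(R)^* = (\eps I + \tau)^{-1}. $$
The main obstacle is the identification $x_{[\eps]}(rR)^{-1} = y(rR)$ in Step 2: while intuitively clear from the pointwise inversion of symbols, it requires justifying that the unbounded Smirnov NC function $x_{[\eps]}^\mrt$ is holomorphic and bounded on each $r\B ^d _\N$ (which follows precisely because its reciprocal $y^\mrt$ is bounded NC outer, hence nonvanishing on $\B ^d _\N$), so that the scaled object $x_{[\eps]}(rR)$ is a genuine bounded operator and is the operator-theoretic inverse of $y(rR)$. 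Everything else is a routine application of the NC von Neumann inequality, Lemma \ref{sotstar}-style kernel-vector density arguments, and the standard behavior of products under $SOT$-convergence on norm-bounded sets.
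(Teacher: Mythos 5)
Your proposal is correct and follows essentially the same route as the paper, whose proof is simply ``Immediate, by Lemma \ref{sotstar}'': one identifies $x_{[\eps]}(rR)^{-1}$ with the radial dilation of the bounded right multiplier $x_{[\eps]}(R)^{-1}\in R^\infty_d$ (bounded by $\eps^{-1/2}$ since $x_{[\eps]}(R)^*x_{[\eps]}(R)=\eps I+\tau\geq \eps I$), applies the kernel-vector argument of Lemma \ref{sotstar} to get $SOT$-$*$ convergence of this uniformly bounded net, and multiplies on bounded sets. The only cosmetic remark is that your justification that $x_{[\eps]}^{\mrt}(rZ)$ is bounded on $\B^d_\N$ via nonvanishing of its reciprocal is more delicate than needed (pointwise invertibility does not obviously give a level-uniform lower bound); it follows at once from $x_{[\eps]}^{\mrt}=x_{[\eps]}(R)1\in\bH^2_d$ and the estimate $\|x_{[\eps]}^{\mrt}(rZ)\|\leq \|x_{[\eps]}^{\mrt}\|_{\bH^2_d}(1-r^2)^{-1/2}$.
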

In the above, and throughout, we employ the notation $ (A^*) ^{-1} =: A^{-*}$.
\begin{proof}
    Immediate, by Lemma \ref{sotstar}.
\end{proof}
\begin{thm}
Let $\tau \geq 0$ be any closed, positive semi-definite $L-$Toeplitz operator so that $\A$ is a core for $\sqrt{\tau}$. If $\eps >0$ is fixed and $y_{[\eps ]} (R) \sim R^\infty _d$ is the potentially unbounded outer right multiplier so that $\eps I+\tau = y _{[ \eps ]} (R) ^* y _{[ \eps ]}  (R)$ and $y _{[\eps ]} ^\mrt = y _{[\eps ]} (R) 1 \in \hardy$, then $y _{[\eps ]} (R) = (M^R _{\psi _\eps} ) ^{-1}$ where 
$$ \psi _\eps  = \frac{1}{\sqrt{\ip{1}{(\eps I+\tau) ^{-1} 1}}} \cdot (\eps I+\tau) ^{-1} 1. $$
\end{thm}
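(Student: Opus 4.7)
The plan is to identify $y_{[\eps]}(R)^{-1}$ as a bounded element of the right analytic Toeplitz algebra $R^\infty_d$, and then to read off its symbol by applying the factorization to the vacuum vector $1 \in \hardy$.

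First I would argue that $y_{[\eps]}(R)^{-1}$ is a bounded right multiplier. Since $\eps I + \tau \geq \eps I$, we have $\| y_{[\eps]}(R) h \|^2 = \ip{h}{(\eps I + \tau) h}_{\hardy} \geq \eps \| h \|^2$ for every $h \in \nbdom y_{[\eps]}(R) = \nbdom \sqrt{\tau}$, so $y_{[\eps]}(R)$ is bounded below. Being closed (by Theorem \ref{tfactor}) and bounded below, it has closed range; combined with the outer hypothesis (dense range) this range is all of $\hardy$, and the inverse mapping theorem produces a bounded inverse $y_{[\eps]}(R)^{-1} \in \scr{L}(\hardy)$. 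Since $y_{[\eps]}(R) \sim R^\infty_d$ commutes with the $L_k$ on its domain, so does the bounded $y_{[\eps]}(R)^{-1}$, and hence it lies in the commutant $\{L_1,\ldots,L_d\}' = R^\infty_d$. Thus there exists a bounded NC function $g$ such that $y_{[\eps]}(R)^{-1} = M^R_g$, and evaluation on the vacuum gives $g = M^R_g 1 = y_{[\eps]}(R)^{-1} 1 \in \hardy$.

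Next I would extract $g$ from the factorization. From $\eps I + \tau = y_{[\eps]}(R)^* y_{[\eps]}(R)$ we obtain $(\eps I + \tau)^{-1} = M^R_g (M^R_g)^*$. Using the right-multiplier action on coefficient evaluation vectors recorded in Subsection~\ref{rkhs}, namely $(M^R_G)^* K_\alpha = \sum_{\ga \beta = \alpha} K_\ga G_\beta^*$, one has $(M^R_g)^* 1 = (M^R_g)^* K_\emptyset = \overline{g(0)} \cdot 1$, and therefore
$$ (\eps I + \tau)^{-1} 1 \; = \; M^R_g (M^R_g)^* 1 \; = \; \overline{g(0)}\, g. $$
Pairing against $1$ in $\hardy$ yields $\ip{1}{(\eps I + \tau)^{-1} 1}_{\hardy} = \overline{g(0)} \, g(0) = |g(0)|^2$.

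Finally, Theorem \ref{tfactor} guarantees that the outer factor $y_{[\eps]}(R)$ is unique up to a unimodular constant, so I may normalize so that $g(0) > 0$. With $g(0) = \sqrt{\ip{1}{(\eps I + \tau)^{-1} 1}_{\hardy}}$, the identity $(\eps I + \tau)^{-1} 1 = \overline{g(0)}\, g$ rearranges to
$$ g \; = \; \frac{1}{\sqrt{\ip{1}{(\eps I + \tau)^{-1} 1}_{\hardy}}} \, (\eps I + \tau)^{-1} 1 \; = \; \psi_\eps, $$
so that $y_{[\eps]}(R)^{-1} = M^R_{\psi_\eps}$, equivalently $y_{[\eps]}(R) = (M^R_{\psi_\eps})^{-1}$. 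The main technical point is the first step: confirming that the unbounded outer factor has a bounded inverse lying in $R^\infty_d$; once this is in hand, everything else is a direct calculation against the vacuum.
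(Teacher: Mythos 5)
Your proof is correct, but it takes a genuinely different route from the paper's. The paper works from the bottom up: it sets $\varphi := (\eps I+\tau)^{-1}1$, shows by a direct computation with the $L-$Toeplitz property of $\eps I+\tau$ that $M^R_\varphi$ extends to a bounded right multiplier satisfying $(M^R_\varphi)^*(\eps I+\tau)M^R_\varphi = \ip{1}{(\eps I+\tau)^{-1}1}\, I$, proves that $\varphi$ is $L-$cyclic (so $M^R_{\psi_\eps}$ is outer) by an iterative Taylor-coefficient argument using surjectivity of $\eps I+\tau$, and only then concludes that $y_{[\eps]}(R)M^R_{\psi_\eps}$ is a unitary right multiplier, hence a unimodular scalar, so that $M^R_{\psi_\eps}=y_{[\eps]}(R)^{-1}$. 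You go top down: since $|y_{[\eps]}(R)|=\sqrt{\eps I+\tau}\geq\sqrt{\eps}\,I$ (which is the clean way to phrase your estimate --- for $h\in\nbdom\sqrt{\tau}$ one should write $\|y_{[\eps]}(R)h\|^2=\|\sqrt{\eps I+\tau}\,h\|^2$ rather than $\ip{h}{(\eps I+\tau)h}$, which literally requires $h\in\nbdom \tau$), the closed operator $y_{[\eps]}(R)$ is bounded below with dense range, hence boundedly invertible; the bounded inverse commutes with the left free shifts, so by the Davidson--Pitts commutant theorem $\{L_1,\dots,L_d\}'=R^\infty_d$ (see \cite{DP-inv}) it equals $M^R_g$ with $g=y_{[\eps]}(R)^{-1}1$; and the symbol is then pinned down by evaluating $(\eps I+\tau)^{-1}=M^R_g(M^R_g)^*$ on the vacuum. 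Your argument buys economy: boundedness and outerness of the multiplier come for free from the properties of $y_{[\eps]}(R)$ already secured in Theorem \ref{tfactor}, with no Toeplitz-form computation. The paper's argument is more self-contained and constructive: it exhibits directly that the specific vector $(\eps I+\tau)^{-1}1$ generates a bounded, $L-$cyclic multiplier, without invoking the commutant theorem or bounded invertibility of the unbounded outer factor. Both treatments must fix the unimodular ambiguity; the paper's ``without loss of generality $\alpha=1$'' corresponds exactly to your normalization $g(0)>0$.
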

\begin{proof}
This is an extension of \cite[Theorem 1.5]{Pop-entropy} to closed, unbounded $L-$Toeplitz operators, and the proof is similar. Again, without loss in generality we assume that $\eps =1$. Let $\varphi := (I+\tau) ^{-1} 1$. We first claim that 
$M^R _\varphi $ defines a bounded operator in $R^\infty _d$. To see this note that $\varphi \in \dom{I+\tau} = \nbdom y(R) ^* y(R) \subset \nbdom y(R)$. Hence $y(R) \varphi \in \nbdom y(R) ^*$ and for any $\alpha \in \F ^d$, $ L^\alpha y(R) \varphi = y (R) L^\alpha \varphi \in \nbdom y (R) ^*$ since $\nbdom y (R)^*$ is $L-$invariant by \cite[Corollary 4.27]{JM-freeSmirnov} and also $L^\alpha \varphi  \in \nbdom y (R)$. We conclude that 
$L^\alpha \varphi \in \nbdom y(R) ^* y (R) $ for any $\alpha \in \F ^d$. Given any free polynomial, $ p (L ) := \sum _\alpha p_\alpha L^\alpha \in \C \{ L_1 , \cdots , L_d \}$,  we have that $p(L) \varphi \in \dom{I+\tau}$ and 
\ba \| M^R _\varphi p \| ^2 & = & \| p(L) \varphi \| ^2 \nn \\
& \leq & \| \sqrt{I+\tau} p(L) \varphi \| ^2 \quad \quad \quad \quad \quad \quad \mbox{well-defined as $\nbdom \sqrt{I+\tau} = \nbdom y (R)$,} \nn \\
& = & \sum _{\alpha ,\beta } \ov{p _\alpha } p _\beta  \ip{L^\alpha \varphi}{(I+\tau) L^\beta \varphi} \nn \\
& = & \sum _\alpha |p _\alpha | ^2 \cdot \ip{\varphi}{(I+\tau) \varphi}  + \sum _{\ga \neq \emptyset, \alpha} \ov{p_\alpha} p_{\alpha \ga} \ip{\varphi}{(I+\tau) L^\ga \varphi} +\  \mbox{c.c.} \nn \\
& = & \sum _\alpha |p _\alpha | ^2 \cdot \ip{(I+\tau) ^{-1} 1}{(I+\tau) (I+\tau ) ^{-1} 1} + 
\sum _{\ga \neq \emptyset, \alpha} \ov{p_\alpha} p_{\alpha \ga} \underbrace{\ip{(I+\tau ) ^{-1} 1}{(I+\tau) L^\ga \varphi}}_{=0 \ \mbox{(as $\ga \neq \emptyset$)}} + \ \mbox{c.c.} \nn  \\
& =& \sum _\alpha |p _\alpha | ^2 \cdot \ip{1}{(I+\tau) ^{-1} 1} = \| p \| ^2 _{\hardy} \ip{1}{(I+\tau) ^{-1} 1}_{\hardy}, \nn \ea where \textrm{c.c.} denotes complex conjugate of the previous term. This proves that $M^R _\varphi$ extends to a bounded operator. Moreover, a similar calculation shows that for any $p, q \in \fp$,
$$ \ip{(M^R _\varphi ) ^* (I+\tau) M^R _\varphi p}{q} = \ip{p}{q} _{\hardy} \ip{1}{(I+\tau) ^{-1} 1} _{\hardy}, $$ 
so that $$ (M^R _\varphi ) ^* (I+\tau) M^R _\varphi = I\ip{1}{(I+\tau) ^{-1} 1}, $$ or, defining $\psi $ as in the theorem statement, 
\be (M ^R _\psi) ^* (I+\tau) M^R _\psi = I. \label{identity} \ee  We claim that $M^R _\psi =: F(R) \in R^\infty _d$ is outer, namely that $\psi$ is cyclic for $L$. To see this suppose that $h \in \hardy$ is orthogonal to $\bigvee L^\alpha \psi$, or equivalently, to $\bigvee L^\alpha \varphi$. Since $(I+\tau) ^{-1}$ is bounded, the closed operator $I+\tau$ is surjective which means that $h = (I+\tau) g $ for some $g \in \hardy$. Then, 
$$ (I+\tau) g \perp L^\alpha (I+\tau) ^{-1} 1, $$ for any $\alpha \in \F ^d$. In particular, taking $\alpha = \emptyset$ shows $g_\emptyset =0$ so that $g = L L ^* g$ Hence, 
\ba 0 & = & \ip{(I+\tau) L L ^* g }{L_k \varphi} _{\bH ^2 _d} \nn \\
& = & \ip{\sqrt{I+\tau} L L^* g}{ \sqrt{I+\tau} L_k \varphi }_{\bH ^2 _d } \nn \\
& = & \ip{\sqrt{I+\tau} L_k ^* g}{\sqrt{I+\tau} \varphi}_{\bH ^2 _d} \nn \\
& = & \ip{L_k ^* g}{(I+\tau) \varphi} = \ip{ g}{L_k 1}, \nn \ea and this shows that all Taylor coeficients of order 1 of $g$ also vanish. Repeating this argument shows that all Taylor coefficients vanish so that $g \equiv 0$, both $\varphi, \psi$ are $L-$cyclic, and $F(R) = M^R _\psi$ is right-outer (and hence has a right-Smirnov inverse affiliated to $R^\infty _d$ \cite{JM-F2Smirnov,JM-freeSmirnov}). 

By previous calculation we have $(M^R _\psi ) ^* y(R) ^* y(R) M^R _\psi = I$, and it follows that $y(R) M^R _\psi \in R^\infty _d$ is a bounded, unitary right multiplier. However, the only normal elements of $R^\infty _d$ are scalar multiples of the identity \cite[Corollary 1.5]{DP-inv}, so that $y(R) M^R _\psi = \alpha I$ for some $\alpha \in \partial \D$, and we can assume, without loss in generality, that $\alpha =1$ and $M^R _\psi = y(R) ^{-1}$. Finally, by Equation (\ref{identity}), $y(R) ^{-*} (I + \tau ) y(R) ^{-1} =I$, so that $I+\tau = y(R) ^* y(R)$. Here, recall our notation: $y(R) ^{-*} := (y(R) ^* ) ^{-1}$.
\end{proof}
\begin{remark} \label{xformula}
Taking $\tau =T \geq 0$ to be the NC Radon-Nikodym derivative of $\mu = \mu _B$ with respect to NC Lebesgue measure, \emph{i.e.} $q _T$ is the closure of $q _{\mu _{ac}}$, the above results can be applied to any $\Delta  (\eps ) = (\eps I + T ) ^{-1}$ for any $\eps >0$. In particular, $\Delta = \Delta (1) = (I +T) ^{-1}$ factors (uniquely up to $\zeta \in \partial \D$) as:
\be \Delta = x(R) ^{-1} x(R) ^{-*}, \quad \mbox{and} \quad I+T = x(R) ^* x(R), \label{Deltafact} \ee where $x(R) ^{-1} \in [R^\infty _d ] _1$ is contractive, right-outer, $x(R) \sim R^\infty _d$, and $x(R) 1 = x^\mrt \in \hardy$ is $L-$cyclic.   
\end{remark}

\subsection{Radial approximation of free harmonic functions}

For any $0<r<1$ consider the map $\Phi _r : R^\infty _d \rightarrow R^\infty _d$ defined by:
$$ \Phi _r (R_k ) := r R_k. $$ Since $rR$ is a strict row contraction, it follows that $\Phi _r$ extends to a completely positive and unital map on the (right) free Toeplitz system $\left( R^\infty _d + (R^\infty _d ) ^* \right) ^{-weak-*}$ \cite[Corollary 2.3]{Pop-NCdisk}.

Observe that $T$ has well-defined Taylor coefficients:
$$ T _\alpha := \ip{\sqrt{T} L^\alpha 1}{\sqrt{T} 1} _{\hardy}, $$ 
and that by definition, 
$$ T _\emptyset = (H _{\mu _{ac}} ) _\emptyset, \quad \mbox{and} \quad T _\alpha = \frac{1}{2}(H _{\mu _{ac}} ) _\alpha, $$ for $\alpha \neq \emptyset$ are (up to a constant) the Taylor coefficients of the NC Herglotz function $H_{\mu _{ac}}$ since $q_T $ is the closure of $q _{\mu _{ac}}$. It follows that we can define the bounded $L-$Toeplitz operators $\Phi _r (T)$ by the Taylor coefficients:
$$ \left( \Phi _r (T) \right)  _{\alpha} = T _\alpha r ^{| \alpha |} = \frac{1}{2} (H _{\mu _{ac}} ) _\alpha r ^{|\alpha |}, $$ and it follows that 
$$ q _{\Phi _r (T)} =  ( \mu _{ac} ) _r, $$ as defined in Lemma \ref{rapprox} so that 
$$ 0 \leq \Phi _r (T )  = \nbre H _{\mu _{ac}} (r R) \leq \nbre H _\mu (rR) = T_r. $$

In the disk, if $h$ is a bounded, positive harmonic function and we have $h(\zeta)= |g(\zeta)|^2$ on the circle, for some bounded analytic $g$, then $|g(z)|^2\leq h(z)$ inside the disk, since $h$ is harmonic, $|g|^2$ is subharmonic, and they have the same boundary values. An NC version of this, adapted to our purposes, is the following: 
\begin{lemma} \label{subharlem}
For any $0<r<1$, we have the harmonic majorant inequality:
\be x(rR) ^* x(rR) \leq I + \Phi _r (T) \leq I + T _r. \label{NCsubharm} \ee 
\end{lemma}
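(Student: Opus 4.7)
The right inequality $I + \Phi_r(T) \leq I + T_r$ is already derived in the display just before the lemma: writing $\mu = \mu_{ac} + \mu_s$ with $\mu_s \in \posncm$, Theorem~\ref{freeHergthm} gives $\nbre H_{\mu_s}(rR) \geq 0$, whence $\Phi_r(T) = \nbre H_{\mu_{ac}}(rR) \leq \nbre H_\mu(rR) = T_r$, and adding $I$ gives the claim.

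For the substantive left inequality $x(rR)^* x(rR) \leq I + \Phi_r(T)$, the plan is to apply the Kadison-Schwarz inequality to Popescu's completely positive unital radial map $\Phi_r$ on the right free Toeplitz system. For any bounded right polynomial $a(R) \in \C\ang{R_1, \dots, R_d}$ one has $\Phi_r(a(R)) = a(rR)$, and $2$-positivity yields
\[
a(rR)^* a(rR) = \Phi_r(a(R))^* \Phi_r(a(R)) \leq \Phi_r(a(R)^* a(R)).
\]
I would apply this to the polynomial truncations $p_N(R) := \sum_{|\alpha|\leq N} x_\alpha R^\alpha$ of the outer factor $x(R)$ from Remark~\ref{xformula}, where the coefficients $x_\alpha$ are read off from $x^\mrt = x(R) 1 = \sum_\alpha x_\alpha e_{\alpha^\mrt} \in \hardy$.

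Fix $h \in \fp \subset \hardy$ and let $N \to \infty$. On the left, since $h$ has finite Taylor support and $x(rR)$ is a bounded right multiplier, $p_N(rR) h \to x(rR) h$ in $\hardy$-norm (each $\hardy$-coefficient of $x(rR) h$ is a finite sum determined by the support of $h$, so $\ell^2$-dominated convergence applies), and therefore $\langle h, p_N(rR)^* p_N(rR) h\rangle = \|p_N(rR) h\|^2 \to \|x(rR)h\|^2 = \langle h, x(rR)^* x(rR) h\rangle$. On the right, $\Phi_r(p_N(R)^* p_N(R))$ is a bounded positive $L$-Toeplitz operator, and the finite support of $h$ plus the $L$-Toeplitz structure reduce the matrix element $\langle h, \Phi_r(p_N(R)^* p_N(R)) h\rangle$ to a finite linear combination of the Taylor coefficients $r^{|\alpha|} \langle p_N^\mrt, L^\alpha p_N^\mrt\rangle$. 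Since $p_N^\mrt \to x^\mrt$ in $\hardy$, these converge to $r^{|\alpha|} \langle x^\mrt, L^\alpha x^\mrt\rangle = r^{|\alpha|}(I+T)_\alpha$, which are precisely the Taylor coefficients of $I + \Phi_r(T)$. Hence $\langle h, \Phi_r(p_N(R)^* p_N(R)) h\rangle \to \langle h, (I + \Phi_r T) h\rangle$, and passing to the limit in Schwarz gives the inequality on $\fp$. Density of $\fp$ in $\hardy$ together with boundedness of both sides extends it to all of $\hardy$.

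The main obstacle is precisely the unboundedness of the outer factor $x(R)$: Kadison-Schwarz cannot be invoked directly at $a(R) = x(R)$, so the argument must be routed through the bounded truncations $p_N(R)$ and one must verify that $\Phi_r$ interacts correctly with them in the limit. The cleanest conceptual justification is Popescu's NC Poisson-kernel realization of $\Phi_r$ as a compression, which exhibits its action on positive $L$-Toeplitz operators as radial damping of Taylor coefficients and hence makes the limit passage transparent.
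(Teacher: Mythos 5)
Your proposal is correct and takes essentially the same route as the paper: the right inequality follows from $0 \leq \Phi_r(T) = \nbre H_{\mu_{ac}}(rR) \leq \nbre H_{\mu}(rR) = T_r$, and the left inequality is proved, exactly as in the paper, by applying the Schwarz inequality for the unital completely positive map $\Phi_r$ to the bounded polynomial truncations of the unbounded outer factor $x(R)$, checking that the Taylor coefficients of $\Phi_r\bigl(x_N(R)^*x_N(R)\bigr)$ converge to those of $I+\Phi_r(T)$, and passing to the limit against free polynomials. The only difference is cosmetic packaging of the limit arguments, not substance.
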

In the above statement, recall that $x(R) \sim R^\infty _d$ is as defined in Remark \ref{xformula}, $x(R) ^* x(R) = I+T$.
\begin{proof}
By the Schwarz inequality for unital $2-$positive maps \cite[Proposition 3.3]{Paulsen}, for any $a(R) \in R^\infty _d $, it follows that 
$$ a(rR) ^* a(rR) \leq \Phi _r (a(R) ^* a(R) ), $$ and we need to show that this inequality holds for potentially unbounded right multipliers, $x(R) \sim R^\infty _d$.

Let $x_N ^\mr{t}$ be the $N$th partial sum of $x^\mrt = x(R) 1 = \sum x_{\alpha ^\mrt} L^\alpha 1$, $x_N ^\mrt = \sum _{|\alpha | \leq N} x_{\alpha ^\mrt} L^\alpha 1$, so that $x_N ^\mrt \rightarrow x ^\mrt$ in $\hardy$, and define
$$ I + T _N := x_N (R) ^* x_N (R). $$ 
Observe that for any $\alpha \in \F ^d$,
\begin{align*} \lim _{N \rightarrow \infty} \left( \Phi _r (I + T _N ) \right) _\alpha & =  |r| ^{\alpha } \lim (I + T _N ) _\alpha  \\
& =  |r| ^\alpha \lim  \ip{L^\alpha 1}{x_N (R) ^* x_N (R) 1}  \\
& =  |r | ^\alpha \lim \ip{L^\alpha  x_N ^\mrt }{x_N ^\mrt }  \\
&=  |r | ^\alpha  \ip{L^\alpha x ^\mrt }{x ^\mrt }  \\
& =  |r | ^\alpha \ip{x(R) L^\alpha 1}{x(R) 1}  \\
& =  |r| ^\alpha \ip{\sqrt{I+T} L^\alpha 1}{\sqrt{I+T} 1}  \\
& =  |r| ^\alpha ( I + T ) _\alpha = (I + \Phi _r (T )  ) _\alpha.  \end{align*}
It follows that if 
$$ p(L) = \sum _{|\alpha | \leq M} p_\alpha L^\alpha, $$ is any free polynomial, then 
\ba & &  \lim _{N\rightarrow \infty } \ip{p}{\Phi _r \left( I + T _N  \right) p}_{\hardy} \nn \\ 
&  & = \lim \sum _{\alpha } | p _\alpha | ^2 (I +T _N ) _\emptyset + \lim \sum _{\substack{\alpha, \ga  \\ \ga \neq \emptyset}} \ov{p_\alpha } p_{\alpha \ga} \underbrace{\ip{1}{ \Phi _r (I +T _N )  L^\ga 1 } }_{=r^{|\ga | } \ov{(I+T _N )_\ga}}
+ \mr{c.c.} \nn \\
&  & = \ip{p}{(I+ \Phi _r (T) )p}_{\hardy}, \label{formula} \ea by the previous calculation. In the above $\mr{c.c.}$ denotes complex conjugate of the previous term. Hence,
\ba \ip{x(rR) p}{x(rR) p} & = & \| p (L) x(rR) 1 \| ^2 \nn \\
& = & \lim _N \| p(L) x_N (rR) 1 \| ^2 \nn \\
& = & \lim \ip{p}{x_N (rR) ^* x_N (rR) p} \nn \\
& \leq & \lim \ip{p}{\Phi _r \left( x_N (R) ^* x_N (R) \right) p} \nn \\
& = & \lim \ip{p}{ \Phi _r (I +T _N )  p} \nn \\
& = & \ip{p}{ (I+ \Phi _r (T ) ) p}, \nn \ea by Equation (\ref{formula}), and this proves that 
$$ x(rR) ^* x(rR) \leq I +  \Phi _r (T) , $$ which is in turn bounded above by $I+T_r$ by the discussion preceding the lemma.
\end{proof}

Consider the net $\Delta _r$, $0< \Delta _r \leq I$, for $0<r<1$. Since this net is uniformly bounded, there is a $WOT-$convergent subsequence $\Delta _k := \Delta _{r_k}$ with limit $0 \leq \delta \leq I$. To show that the entire net $\Delta _r$ converges in the weak operator topology to $\delta$, it suffices to show that any $WOT-$convergent subsequence of the net $\Delta _r$ has the same limit. 

\begin{prop} \label{uniquelim}
Let $\Delta _r := (I +T_r)^{-1}$. Then $\Delta _r \stackrel{WOT}{\rightarrow} \Delta$, where $\Delta := (I + T ) ^{-1}$. 
\end{prop}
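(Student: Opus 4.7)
Plan: The net $(\Delta_r)$ lies in $[0, I]$ and is therefore relatively compact in the weak operator topology. It suffices to show every WOT-cluster point equals $\Delta$, so fix a subnet $\Delta_{r_\lambda} \to \delta$ in WOT; the strategy is to sandwich $\delta$ between $\Delta$ and $\Delta$. The upper bound $\delta \leq \Delta$ is immediate from the preceding lemmas: Lemma \ref{subharlem} gives $\Delta_r \leq x(rR)^{-1}x(rR)^{-*}$, and Lemma \ref{SRlimlem} shows the right-hand side converges SOT (hence WOT) to $\Delta = x(R)^{-1}x(R)^{-*}$.

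For the lower bound $\delta \geq \Delta$, I would invoke the dual variational characterization of the resolvent of a positive self-adjoint $A$: for any $h \in \hardy$ and any form-core $\mc{D} \subseteq \nbdom \sqrt{A}$,
\[
\ip{h}{(I+A)^{-1}h}_{\hardy} = \sup_{k \in \mc{D}}\bigl(2\nbre\ip{h}{k}_{\hardy} - \|k\|_{\hardy}^2 - q_A(k,k)\bigr).
\]
Taking $A = T_r$ and $\mc{D} = \A$, one has $\ip{h}{\Delta_r h}_{\hardy} \geq 2\nbre\ip{h}{k}_{\hardy} - \|k\|_{\hardy}^2 - q_{T_r}(k,k)$ for every $k \in \A$. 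By Lemma \ref{rapprox}, $q_{T_r}(k,k) = \mu_r(k^*k) \to \mu(k^*k) = q_\mu(k,k)$, so taking $\liminf_r$ for each fixed $k$ and then $\sup_{k \in \A}$ gives
\[
\liminf_r \ip{h}{\Delta_r h}_{\hardy} \geq \|h\|_{\hardy}^2 - \inf_{k \in \A}\bigl(\|h-k\|_{\hardy}^2 + q_\mu(k,k)\bigr).
\]
The remaining task is to identify the infimum on the right with $\|h\|_{\hardy}^2 - \ip{h}{\Delta h}_{\hardy}$. Using that $\A$ is dense in $\hardy(\mu+m) = \cH(q_\mu + 1)$, that $\|k\|^2_{\mu+m} = \|k\|^2_{\hardy} + q_\mu(k,k)$, and that the canonical embedding $E:\hardy(\mu+m) \hookrightarrow \hardy$ satisfies $\ip{h}{k}_{\hardy} = \ip{E^*h}{k}_{\mu+m}$ for $k \in \A$, one rewrites
\[
\|h-k\|_{\hardy}^2 + q_\mu(k,k) = \|h\|_{\hardy}^2 - 2\nbre\ip{E^*h}{k}_{\mu+m} + \|k\|^2_{\mu+m},
\]
whose infimum over the dense subspace $\A$ of $\hardy(\mu+m)$ equals $\|h\|_{\hardy}^2 - \ip{h}{EE^*h}_{\hardy}$. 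Finally, the formula $q_{ac}(k,k) = \ip{k}{Q_{ac}k}_{\mu+m} - \ip{k}{k}_{\hardy}$ of Theorem \ref{maxclosform} rearranges to $\ip{(I+T)k}{k}_{\hardy} = \ip{k}{Q_{ac}k}_{\mu+m}$ on $\A$, and since $E^*(\hardy) \subseteq Q_{ac}\hardy(\mu+m)$ a short computation gives $EE^* = (I+T)^{-1} = \Delta$.

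The main obstacle is this lower bound. The pointwise form-limit of $q_{T_r}$ on $\A$ is $q_\mu = q_{ac} + q_s$, which in general strictly dominates $q_T = \ov{q_{ac}}$; indeed, when $\mu_s \neq 0$, $T_r$ does not converge to $T$ in any operator-theoretic sense (for instance for $\mu = m + \mu_s$ with $\mu_s$ singular, the $\Delta_r$ should tend to $\tfrac{1}{2}I$ even though $T_r \to \infty$ on the support of $\mu_s$). The rescue is Simon's insight encoded in Theorem \ref{maxclosform}: the minimization $\inf_{k \in \A}(\|h-k\|^2 + q_\mu(k,k))$ automatically projects onto the closable direction $Q_{ac}\hardy(\mu+m)$, and the singular contribution $q_s$ — supported on $Q_s\hardy(\mu+m) = \ker E$ — is annihilated.
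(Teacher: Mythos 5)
Your argument is correct, and its key half runs along a genuinely different track from the paper's. The upper bound for a WOT-cluster point $\delta$ is the same in both proofs: Lemma \ref{subharlem} plus the anti-monotonicity of the map $A \mapsto (I+A)^{-1}$ (the paper invokes Kato, Ch.~VI, Thm.~2.21 here --- you should make that inversion explicit) gives $\Delta_r \leq x(rR)^{-1}x(rR)^{-*}$, and Lemma \ref{SRlimlem} sends the majorant to $\Delta$ in SOT. Where you diverge is the lower bound $\delta \geq \Delta$: the paper first proves $\delta$ is injective, sets $T' = \delta^{-1}-I$, factors $I+T_r = y^{(r)}(R)^*y^{(r)}(R)$ with $y^{(r)}(R)^{-1} = c_r M^R_{\Delta_r 1}$ via the Popescu-type factorization, identifies the WOT limit of these outer inverses to get $\delta = y(R)^{-1}y(R)^{-*}$, bounds the vector state of $y(R)1$ by $\mu+m$ using Lemma \ref{rapprox}, and then invokes maximality of $q_{\mu_{ac}}$ to get $q_{T'} \leq q_T$. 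You instead use the variational formula $\ip{h}{(I+A)^{-1}h} = \sup_k \left( 2\,\mathrm{Re}\ip{h}{k} - \|k\|^2 - q_A(k,k)\right)$ over a form core, pass to the limit with Lemma \ref{rapprox}, and identify the resulting infimum as $\|h\|^2 - \ip{h}{EE^*h}$ with $EE^* = (I+T)^{-1}$; this last identity is true and is exactly Simon's formula at work (it needs, besides $\mathrm{Ran}\, E^* \subseteq \mathrm{Ran}\, Q_{ac}$, the fact that $k \mapsto Q_{ac}(k+N_{\mu+m})$ is an isometry of $(\A, \|\cdot\|_{q_T+1})$ onto a dense subspace of $\mathrm{Ran}\, Q_{ac}$, i.e.\ that $\A$ is a form core for $q_T$ --- in a final write-up those two or three lines should be supplied rather than left as ``a short computation''). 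What your route buys is economy: it bypasses the injectivity lemma for $\delta$ and the entire factorization-of-$I+T_r$ apparatus, and it makes transparent, as you note, why the singular part is annihilated in the limit --- the minimization is blind to $q_s$, which lives on $\ker E$. What the paper's longer route buys is by-product information: the explicit outer factors $y^{(r)}(R)$ and the weak convergence of $y^{(r)}(R)1$ to $x(R)1$ are quoted in a later remark, so a proof along your lines would have to re-derive that separately; the paper's argument also displays the maximality property of the absolutely continuous part as the decisive mechanism, whereas in yours it is absorbed into the identity $EE^*=\Delta$.
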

This proposition does most of the work of proving our NC Fatou Theorem; once it is established it remains only to improve $WOT$ convergence to $SOT$ convergence, which is a routine argument using the resolvent identity. To prove the proposition, we begin with a lemma. 
\begin{lemma}
    Let $\Delta _k := \Delta _{r_k}$ be any $WOT-$convergent subsequence of $\Delta _r$ with limit $\delta$.
Then $0 < \delta \leq I$ is injective so that $\delta ^{-1}$ is a closed, positive operator.
\end{lemma}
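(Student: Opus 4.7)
The plan is to lower-bound $\delta$ by a manifestly injective operator, built from the factored formula for $\Delta_r$ recorded immediately before the lemma. Since $B \in \scr{L}_d$, Remark~\ref{strictcon} guarantees that $B(rR)$ is a strict contraction for every $0<r<1$, so the bounded self-adjoint operator $\nbre B(rR)$ has spectrum in $(-1,1)$. Consequently $I-\nbre B(rR)$ has spectrum in $(0,2)$, whence $(I - \nbre B(rR))^{-1} \geq \tfrac{1}{2} I_{\bH^2_d}$. Substituting this into
\[
\Delta_r = \tfrac{1}{2}(I - B(rR))\,(I - \nbre B(rR))^{-1}\,(I - B(rR)^*)
\]
yields the operator inequality
\[
\Delta_r \;\geq\; \tfrac{1}{4}(I - B(rR))(I - B(rR)^*).
\]

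Next I pass to the $WOT$-limit along the chosen subsequence $r_k\uparrow 1$. By Lemma~\ref{sotstar}, $B(rR) \stackrel{SOT-*}{\longrightarrow} B(R)$, and since the nets $I - B(rR)$ and $I - B(rR)^*$ are uniformly norm-bounded (by $2$), joint $SOT$-continuity of multiplication on bounded sets gives $(I - B(rR))(I - B(rR)^*) \stackrel{SOT}{\longrightarrow} (I - B(R))(I - B(R)^*)$, hence also in $WOT$. Combining this with $\Delta_{r_k} \stackrel{WOT}{\to} \delta$, the lower bound survives the limit:
\[
\delta \;\geq\; \tfrac{1}{4}(I - B(R))(I - B(R)^*).
\]
In particular, if $\delta h = 0$, then
\[
0 = \ip{h}{\delta h}_{\bH^2_d} \;\geq\; \tfrac{1}{4}\ip{h}{(I - B(R))(I - B(R)^*) h}_{\bH^2_d} = \tfrac{1}{4}\|(I - B(R)^*) h\|^2_{\bH^2_d} \;\geq\; 0,
\]
so $h \in \nbker (I-B(R)^*)$.

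It remains to verify that $I - B(R)^*$ is injective. Since $B\in\scr{L}_d$ is not the constant $1$ (else $H_B$, and hence $\mu=\mu_B$, would be undefined), the Cayley-transform bijection recalled before Theorem~\ref{freeHergthm}, ultimately resting on \cite[Chapter III.8]{NF}, requires that $B(R)$ not have $1$ as an eigenvalue. Moreover, for any Hilbert-space contraction $T$, a short Schwarz-inequality argument (if $Th=h$, then equality in $\|h\|^2 = \|Th\|^2 = \ip{h}{T^*T h}_{\bH^2_d} \leq \|h\|^2$ forces $T^*T h = h$, and hence $T^*h = h$) shows that $1$ is an eigenvalue of $T$ if and only if it is an eigenvalue of $T^*$. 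Therefore $\nbker(I-B(R)^*) = \{0\}$, so $h=0$, and $\delta$ is injective. As $\delta$ is then a bounded, positive, injective operator, its range is dense and its inverse $\delta^{-1}$, defined on $\nbran\delta$, is a closed, positive, densely-defined operator. The main (and essentially only) technical point is the injectivity of $I-B(R)^*$ in the final paragraph; the rest of the argument is routine operator-inequality manipulation combined with Lemma~\ref{sotstar}.
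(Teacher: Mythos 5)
Your argument is correct and follows essentially the same route as the paper: the lower bound $(I-\nbre B(rR))^{-1}\geq \tfrac12 I$, passage to the limit via Lemma \ref{sotstar}, and the reduction of injectivity of $\delta$ to triviality of $\nbker (I-B(R)^*)$ (including the contraction argument transferring the eigenvalue $1$ between $B(R)^*$ and $B(R)$) all match the paper's proof, merely packaged as an operator inequality rather than evaluated on a fixed kernel vector. The one place where you are loose is the very last step: the Cayley-transform bijection recalled before Theorem \ref{freeHergthm} is a statement about NC-function values $B(Z)$, $H_B(Z)$ at matrix points, and the operator-level fact from \cite[Chapter III.8]{NF} only yields that $B(R)$ has no eigenvalue $1$ once one knows that $B(R)$ on the Fock space is the Cayley transform of the closed, densely-defined accretive operator $H_B(R)$ --- which presupposes exactly the injectivity of $I-B(R)$ you are trying to prove (it is true, and implicit in the construction of $\mu_B$ in \cite{JM-freeCE}, but as cited it is borrowed rather than proved). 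The paper closes this gap with a two-line direct argument you could substitute: from $B(R)h=h$ one gets $U_{\mr{t}}(I-B(R))h=(I-B(L))h^{\mr{t}}=0$, so $(I-B(Z))h^{\mr{t}}(Z)=0$ for every $Z\in\B^d_\N$; since $B\not\equiv 1$, $I-B(Z)$ is invertible at every point (by the NC maximum modulus principle when $B$ is non-constant, trivially for constants $\neq 1$), whence $h^{\mr{t}}\equiv 0$ and $h=0$. With that substitution your proof is complete and self-contained.
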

\begin{proof}
Suppose that $\delta$ has kernel so that there is a non-zero $h \in \hardy$ so that $\delta h =0$. It follows that
\begin{align*} 0 & =  \ip{h}{\delta h} = \lim _{r_k\uparrow 1} \ip{\Delta _{r_k} h}{h} \\
& = \frac{1}{2} \lim _{k \rightarrow \infty } \ip{g_k}{(I - \nbre B(r_kR) ) ^{-1} g _k }, \end{align*} where 
$g_k := (I - B(r_kR) ^* ) h$. By the NC Schwarz inequality and M\"{o}bius transforms, $B(r_kR)$ is a strict contraction so that $0<(I - \nbre B(r_k R) ) <2I$ is invertible for $0<r_k<1$ \cite[Theorem 2.4]{Pop-freeholo}. (Alternatively, $B(rR)$ is a strict contraction for any $0<r<1$ by the NC maximum modulus principle, see Remark \ref{strictcon}.) By spectral mapping, 
\begin{align*} \frac{1}{2} I & \leq (I - \nbre B(r_kR) ) ^{-1}, 
\intertext{so that} 
 0 & = \ip{h}{\delta h} \\
& = \frac{1}{2} \lim _k \ip{g_k}{(I - \nbre B(r_k R)  ) ^{-1} g _k} \\ 
& \geq  \frac{1}{4} \lim _k \| g _k \| ^2  \\ 
& = \frac{1}{4} \lim _{k} \| (I - B(r_kR) ^* ) h \| ^2  \\
& = \frac{1}{4} \| (I - B(R)^*) h \| ^2. \end{align*} 
Since $B(R)$ is a contraction, it follows also that $(I - B(R)) h =0$. However, since we assume that $B \in \scr{L} _d$ is non-constant and hence strictly contractive in the NC unit row-ball, $I - B(Z)$ must be invertible. Hence, 
\ba 0 & = & U_{\mr{t}} (I - B(R) ) h  = U_{\mr{t}} (I-B(R) ) U_{\mr{t}} ^* U_{\mr{t}} h \nn \\
& = & (I - B(L) ) h^{\mr{t}}, \nn \ea so that for any $Z$, 
$$ 0 = (I - B(Z) ) h^{\mr{t}} (Z). $$ We conclude that $h \equiv 0$, and that $\delta$ is injective.
\end{proof}
The following familiar fact will be used repeatedly in the sequel:
\begin{lemma} \label{weakmultconv}
Let $\cH _{nc} (K)$ be an NC-RKHS on an NC set $\Om \subseteq \C ^d _\N$ with CPNC kernel $K$.
A seqeunce, $(h_j)$, in $\cH _{nc} (K)$ converges weakly to some $h \in \cH _{nc} (K)$ if and only if $(h_j)$ is uniformly norm-bounded and $h_j  \rightarrow h$ pointwise in $\Om$. 

Assume that $\cH _{nc} (K)$ contains the constant function $\mr{id} (Z) := I_n$. A sequence, $(H_j)$, of (left or right) NC multipliers of $\cH _{nc} (K)$ converges in the weak operator topology to a (left or right) multiplier $H$ (in the sense that $M^R _{H_j} \stackrel{WOT}{\rightarrow} M^R _H$ or $M^L _{H_j} \stackrel{WOT}{\rightarrow} M^L _H$) if and only if left or right multiplication by $H_j$, respectively, is uniformly bounded in operator norm and $H_j \rightarrow H$ pointwise in $\Om$. 
\end{lemma}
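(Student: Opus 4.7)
The plan is to establish the standard NC analogue of the classical RKHS characterization of weak convergence: uniform boundedness together with pointwise convergence. The key tools are the reproducing identity $\ip{K\{Z,y,v\}}{f}_{\cH_{nc}(K)} = y^* f(Z) v$, the density of the linear span of NC kernel vectors in $\cH_{nc}(K)$, and the uniform boundedness principle.

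For Part 1, if $h_j \to h$ weakly then Banach--Steinhaus forces $\sup_j \| h_j \| < \infty$, while the reproducing identity gives
$$y^* h_j(Z) v = \ip{K\{Z,y,v\}}{h_j} \longrightarrow \ip{K\{Z,y,v\}}{h} = y^* h(Z) v$$
for every $Z \in \Om_n$ and $y, v \in \C^n$. Since $\C^{n\times n}$ is finite-dimensional, this forces $h_j(Z) \to h(Z)$ in matrix norm. Conversely, pointwise convergence plus the reproducing identity yields $\ip{g}{h_j} \to \ip{g}{h}$ for every $g$ in the dense linear span of NC kernel vectors, and a standard $3\eps$-argument, using the uniform norm bound to control the tails, extends this to arbitrary $g \in \cH_{nc}(K)$.

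For Part 2, the hypothesis that $\mr{id}(Z) := I_n$ lies in $\cH_{nc}(K)$ is what permits the symbol of a multiplier to be recovered from its multiplication operator via $M^R_G \, \mr{id} = G$ and $M^L_F \, \mr{id} = F$. Hence if $M^R_{H_j} \to M^R_H$ in the weak operator topology, Banach--Steinhaus provides uniform operator norm-boundedness, and applying WOT convergence to the vector $\mr{id}$ shows $H_j \to H$ weakly in $\cH_{nc}(K)$; Part 1 then supplies pointwise convergence on $\Om$. For the converse, the adjoint action formulas from Section~\ref{rkhs},
$$(M^R_G)^* K\{Z,y,v\} = K\{Z, y, G(Z) v\}, \qquad (M^L_F)^* K\{Z,y,v\} = K\{Z, F(Z)^* y, v\},$$
combined with the pointwise convergence $H_j(Z) \to H(Z)$ in the matrix norm of $\C^{n\times n}$, imply that $(M^R_{H_j})^* K\{Z,y,v\} \to (M^R_H)^* K\{Z,y,v\}$ in the norm of $\cH_{nc}(K)$ (indeed the squared error is $y^* K(Z,Z)[(H_j(Z) - H(Z)) v v^* (H_j(Z) - H(Z))^*] y$, which vanishes). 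Density of the span of NC kernel vectors together with the uniform operator norm bound upgrades this to WOT convergence of the adjoints, hence of $M^R_{H_j}$ and $M^L_{H_j}$ themselves.

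I do not expect any genuine obstacle; the only mildly subtle point is that $\C^{n\times n}$ is finite-dimensional, which ensures that weak/componentwise convergence of matrix values coincides with norm convergence and allows the scalar reproducing identity to upgrade to pointwise matrix-norm statements.
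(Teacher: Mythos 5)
Your proposal is correct and follows essentially the same route as the paper: the reproducing identity plus the uniform boundedness principle for the forward directions, recovering symbols by applying the multipliers to the constant function $\mr{id}$, and density of the linear span of NC kernel vectors (with the adjoint action formulas) for the converses. The only difference is that you spell out the kernel-vector computation that the paper compresses into ``readily implies,'' which is harmless.
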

\begin{proof}
If $h_j \stackrel{w}{\rightarrow} h$, where $w$ denotes weak convergence, then this sequence is bounded in Hilbert space norm and for any $(Z,y,v) \in \Om _n \times \C ^n \times \C ^n$, $$ y^* h_j (Z) v = \ip{K\{Z , y ,v \}}{h_j}_K \rightarrow \ip{K\{Z, y , v \}}{h}_K = y^* h(Z) v.$$ Similarly if $\mr{id} \in \cH _{nc} (K)$ and $M^R _{H_j} \stackrel{WOT}{\rightarrow} M^{R} _H$, then $(M^R _{H_j})$ is uniformly bounded in operator norm, and if $h_j := M^R _{H_j} \cdot \mr{id} \in \cH _{nc} (K)$ then $h_j \stackrel{w}{\rightarrow} h := M^R _H \cdot \mr{id}$ and $h_j (Z) = H_j (Z)$, $h(Z) = H(Z)$. 

Conversely, the linear span of the NC kernels $K\{ Z, y , v \}$ is dense in $\cH _{nc} (K)$ (in fact, linear combinations of NC kernels are NC kernels), so pointwise convergence and uniform boundedness readily implies weak or WOT convergence. 
\end{proof}

\begin{proof}[Proof of Proposition \ref{uniquelim}]
Let $\Delta _k$ be any $WOT-$convergent subsequence with limit $\delta$, it suffices to show that $\delta = \Delta = (I+T) ^{-1}$, where recall that $q_T$ is the closure of $q_{\mu _{ac}}$, and $\mu = \mu _B$. This will prove that every $WOT-$convergent subsequence of the $\Delta _r$ has the same limit, $\Delta$, so that the entire net converges in $WOT$ to $\Delta$. By the previous lemma, $\delta ^{-1}$ is closed, positive, and bounded below by $1$ and we can define $T' := \delta ^{-1} - I$. By spectral mapping, $T'$ is positive semi-definite.

By \cite[Theorem 1.5]{Pop-entropy}, for any $0<r<1$, there is a bounded $y ^{(r)} (R) \in R^\infty _d$ so that  $\Delta _r ^{-1} = I +T _r = y ^{(r)} (R) ^* y ^{(r)} (R)$, where 
$$ y ^{(r)} (R) = c_r ^{-1} (M ^R _{\Delta _r 1} ) ^{-1}; \quad \quad c_r := \ip{1}{\Delta _r 1} ^{-1/2} , $$ so that $y ^{(r)} (R) ^{-1} = c_r M^R _{\Delta _r 1 }$. Here, note that any right outer $y(R) \sim R^\infty _d$ is always pointwise invertible in the NC unit ball $\B ^d _\N$ \cite[Lemma 3.2]{JM-freeSmirnov}. Moreover, 
$$ \Delta _r = (I + T_r ) ^{-1} = y ^{(r)} (R) ^{-1} y ^{(r)} (R) ^{-*}, $$ so that $\| y ^{(r)} (R) ^{-1} \| \leq 1$. 
The Taylor coefficients of $y ^{(r)} (R) ^{-1} $ are then:
$$ \ip{ L^\alpha 1}{y ^{(r)} (R) ^{-1} 1} = c_r \ip{L^\alpha 1} {\Delta _r 1}. $$ 
Consider the operator $c M^R _{\delta 1}$ where $c:= \ip{1}{\delta 1} ^{-\frac{1}{2}}$, initially defined on free polynomials.  Then, since $\Delta _k = \Delta _{r_k} \stackrel{WOT}{\rightarrow} \delta$, we claim $cM^R _{\delta 1}$ defines a contractive right multiplier, $y(R) ^{-1}$ so that $y_k (R) ^{-1} := y ^{(r_k)} (R) ^{-1} $ converges in the weak operator topology to $y(R) ^{-1}$. Indeed, for any $\alpha, \beta \in \F ^d$, 
$$ \ip{ L^{\alpha} 1}{ (c M^R _{\delta 1} - y_k(R) ^{-1} ) L^\beta 1}  = \left\{ \begin{array}{ccc}  \ip{ L^{\ga} 1}{(\delta - \Delta _k ) 1} & \ & \alpha = \beta \ga \\ \ip{1}{ L^\ga (\delta - \Delta _k) 1} =0 & & \beta = \alpha \ga, \ \ga \neq \emptyset \\
0 & & \mbox{else} \end{array} \right\} \quad  
\longrightarrow 0, $$ and so the same holds replacing the free monomials $L^\alpha, L^\beta$ with any free polynomials $p,q$. Since the free polynomials are dense in $\hardy$ and the sequence $y_k (R) ^{-1}$ is uniformly norm bounded, it follows that 
$$ y_k (R) ^{-1} \stackrel{WOT}{\rightarrow} cM^R _{\delta 1}, $$ so that $y(R) ^{-1} := c M^R _{\delta 1}$ is a contractive right multiplier.
We claim that $\delta = y(R ) ^{-1} y(R) ^{-*}$. Indeed this follows because $$ \Delta _k = y_k (R) ^{-1} y_k (R) ^{-*} \stackrel{WOT}{\rightarrow} \delta.$$ For any $(Z,y,v)$ and another $(W,b,c) \in \B ^d _m \times \C ^m \times \C ^m$,
$$ \ip{ K \{ Z , u , v \} }{ \delta  K \{ W , b, c \} }  =  \lim _k \ip{ K \{ Z ,u , v \} }{ \Delta _k K \{ W , b, c \} }. $$ Since $y_k (R) ^{-1} \stackrel{WOT}{\rightarrow} y(R) ^{-1}$, Lemma \ref{weakmultconv} implies that $y_k ^{\mr{t}} (Z) ^{-1}$ converges pointwise to $y ^{\mr{t}} (Z) ^{-1}$ in the NC unit ball,
\ba \ip{ K \{ Z , u , v \} }{ \Delta _k K \{ W , b, c \} } & = & \ip{ (y_k (R) ^{-1} ) ^* K \{ Z, u , v \} }{y_k (R) ^{-*} K \{ W , b , c \} } \nn \\
& = & \ip{ K \{ Z , u , y_k ^{\mr{t}}  (Z) ^{-1} v \} }{K \{ W , b , y_k ^{\mr{t}} (W) ^{-1} c \}} \nn \\
& = & \ipcn{u}{K(Z,W)[  y_k ^{\mr{t}} (Z) ^{-1} vc^* y_k ^{\mr{t}} (W) ^{-*}  ] b } \nn \\
& \rightarrow & \ipcn{u}{K(Z,W)[  y ^{\mr{t}} (Z) ^{-1} vc^* y ^{\mr{t}} (W) ^{-*}  ] b } \nn \\
& = & \ip{ K \{ Z, u , v \} }{ y (R) ^{-1} y (R) ^{-*} K \{ W , b , c \} }, \nn \ea  and the claim follows. Consider $y_k (R) := y  ^{(r_k)} ( R)$. Observe that the sequence $y_k ^\mrt= y_k (R) 1$ is uniformly bounded in $\hardy$: 
\ba \| y_k (R) 1 \| ^2 & = & \ip{1}{(I +\nbre H_B (r_k R ) ) 1} _{\hardy} \nn \\
& = & 1 + \mu _{B _{r_k}} (I) = 1  + \nbre H_{B_{r_k} ; \emptyset}  \nn \\
& = & 1 + \nbre H_B (0) = 1 + \mu _B (I) < + \infty. \nn \ea 
It follows that there is a weakly convergent subsequence $y_j ^\mrt = y_{k_j} ^\mrt $ with limit $\wt{y} ^\mrt \in \hardy$. However, we already know that for any $Z \in \B ^d _\N$, $y_k ^{\mr{t}} (Z) ^{-1}$ converges to $y^{\mr{t}} (Z) ^{-1}$ so that $y_k ^{\mr{t}} (Z)$ converges to $y^{\mr{t}} (Z)$ pointwise in $\B ^d _\N$. By Lemma \ref{weakmultconv}, it follows that the subsequence $y_j ^\mrt = y_{k_j} ^\mrt$ converges pointwise to both $y ^\mrt$ and $\wt{y} ^\mrt $ so that $y ^\mrt = \wt{y} ^\mrt \in \hardy$. In particular, for any free polynomial, $p$, 
\ba \| y(R) p \| ^2 & = & \| p(L) y ^\mrt \| ^2 \nn \\
& = & \lim _k \ip{p(L) y_k ^\mrt  }{p(L) y ^\mrt } \nn \\
& \leq & \| y(R) p \| \limsup _k \| p(L) y_k ^\mrt  \| , \nn \ea 
and it follows that 
\ba \| y (R) p \| ^2  & \leq & \limsup _k \| p(L) y_k ^\mrt  \| ^2 \nn \\
& = & \limsup _k \ip{p}{y_k (R) ^* y_k (R) p}  \nn \\
& = & \limsup _k  \ip{p}{(I + \nbre H _B (r_k R ) )p}  \nn \\
& = & \lim _k \mu _{B_{r_k}} (p^* p)  + m (p^* p )  \nn \\
& = & (\mu _B +m ) (p^* p), \quad \quad \mbox{by Lemma \ref{rapprox}.} \nn \ea 
Since sums of squares, $p^*p$, of free polynomials $p \in \fp$ are norm-dense in the positive cone of $\scr{A} _d$, this proves that the vector state $m_{y^\mrt} (L^\alpha ) := \ip{y ^\mrt }{L^\alpha y ^\mrt } _{\hardy}$ is bounded above by $\mu +m = \mu _B +m$ \cite[Lemma 4.6]{JM-freeAC}. However, as proven above, $\delta = y(R) ^{-1} y(R) ^{-*}$, so that by definition $I +T ' = \delta ^{-1} = y(R) ^* y(R)$. Hence,
\ba q_y (a_1 ,a_2 ) &:= &  m_{y^\mrt} (a_1 ^* a_2 ); \quad \quad a_1, a_2 \in \A  \nn \\
& = & q_{I+T'} (a_1, a_2). \nn \ea  That is, given $\dom{q_{\mu_B} + q_m} = \A$, 
$$ q_{T'} + q_m = q_{I+T'} = q_y  \leq q_{\mu _B + m} = q _{\mu _B} + q_m, $$ in the sense of the partial order on positive semi-definite quadratic forms, see Equation (\ref{formpo}). This proves that $q_{T'}$ is a closable positive semi-definite quadratic form obeying $q_{T'} \leq q_{\mu _B}$. By Theorem \ref{NCLD} the closure of $\ov{ q_{\mu _{ac}} } $ is $q _{T}$, and $q_{\mu_{ac}}$ is the maximal closable positive semi-definite quadratic form bounded above by $q_\mu$. Maximality then implies that $T' \leq T$ in the sense that $q_{T'} \leq q_T$. That is, $\mr{Dom} \sqrt{T} \subseteq \mr{Dom} \sqrt{T'}$, and $$ \| \sqrt{T '} h \| ^2  = q_{T '} (h, h) \leq q_{T} (h,h) = \| \sqrt{T} h \| ^2, $$  for all $h \in \mr{Dom} \sqrt{T}$.

Conversely, for any $0<r<1$, the harmonic majorant inequality of Lemma \ref{subharlem} implies that \be x (rR) ^* x(rR) \leq I +T _r.  \label{harlemr} \ee (Recall that $x(R) ^* x(R) = I +T$.) 
By \cite[Chapter VI, Theorem 2.21]{Kato} (see also \cite[Proposition 1.1]{Simon1}), closed positive semi-definite operators $T_1, T_2$ obey $q_{T_1} \leq q_{T_2}$ if and only if $(\la I + T_2) ^{-1} \leq (\la I +T_1 ) ^{-1}$ for any $\la >0$. It follows that the above inequality (\ref{harlemr}) is equivalent to:
$$ (I + T_r ) ^{-1}  = \Delta _r \leq x(rR) ^{-1} x(rR) ^{-*}. $$
In the above, recall that $x(R) ^{-1}$ and hence $x(rR) ^{-1}$ are contractions. In particular, for each $r = r_k$, 
$$ \underbrace{\Delta _{r_k}}_{\stackrel{WOT}{\rightarrow} (I+T' ) ^{-1}}   \leq \underbrace{x(r_kR) ^{-1} x(r_kR) ^{-*} }_{\stackrel{SOT}{\rightarrow} (I +T) ^{-1}},$$ where the $SOT$ convergence of $x(rR) ^{-1} x(rR) ^{-*}$ to $(I+T) ^{-1}$ follows from Lemma \ref{SRlimlem}.  Again by \cite[Chapter VI, Theorem 2.21]{Kato}, we conclude that 
$$ I+ T  \leq I +T' , $$ so that $q_T \leq q_{T'}$ and hence $q_T = q_{T'}$. By the uniqueness of the Riesz representation of closed, densely-defined positive semi-definite quadratic forms, $T=T'$ \cite[Chapter VI, Theorem 2.1, Theorem 2.23]{Kato}.  
\end{proof}

\begin{remark}
Observe that with $x(R) ^* x(R) = I + T$, and $y ^{(r)} (R) ^* y ^{(r)} (R) = I + T _r$, we have that 
$x (rR) 1 \rightarrow x(R)1 = x ^\mrt$ in $\hardy$, and similarly since $(I +T_r ) ^{-1} $ converges in $WOT$ to $(I +T ) ^{-1}$, one can argue that $y^{(r) ; \mrt} := y ^{(r)} (R) 1$ converges weakly to $x(R)1$ in $\hardy$. (That $y^{(r) ; \mrt} := y ^{(r)} (R) 1$ converges weakly to $x(R)1$ in $\hardy$ was shown in the proof of Proposition \ref{uniquelim}.) However, $y ^{(r); \mrt}$ generally does not converge in norm to $x ^\mrt$, as this would imply that $\mu = \mu _{ac}$, which is generally not true. Indeed, if $y^{(r); \mrt} \stackrel{\bH ^2}{\rightarrow} x^\mrt$, then for any $a_1, a_2 \in \A$, 
\begin{align*}  \underbrace{\mu_r (a_1^* a_2) + m (a_1 ^* a_2)}_{\verteq} & \rightarrow \mu (a_1^* a_2 ) + m (a_1 ^* a_2) \quad \quad \mbox{by Lemma \ref{rapprox}} \\
m_{y^{(r) ; \mrt}} (a_1 ^* a_2 ) & \rightarrow m_{x^\mrt} (a_1 ^* a_2 ) = \mu _{ac} (a_1 ^* a_2) + m (a_1 ^* a_2). \end{align*}

\end{remark}
\begin{remark}
Since $T' = T$, Equation (\ref{NCsubharm}) becomes: 
\be x(rR) ^* x(rR) \leq I + T_r = 2 (I - B(rR)^* ) ^{-1} (I - \nbre B(rR) ) (I - B(rR) ) ^{-1}. \ee Equivalently, setting $X(rR) := x(rR) (I - B(rR))$, 
\be X(rR) ^* X(rR) \leq 2 (I - \nbre B(rR) ). \ee
This shows that the free harmonic function $2(I-\nbre B^{\mr{t}} (Z))$ is a harmonic majorant for the free pluri-subharmonic function $X^{\mr{t}} (Z) ^* X ^{\mr{t}} (Z)$. 
\end{remark}

We are finally in position to prove our main theorem. 

\begin{proof}[Proof of NC Fatou Theorem]
We have proven that $\Delta _r := (I +T _r ) ^{-1}$ converges to $\Delta := (I +T) ^{-1}$ in $WOT$ and a similar analysis for $\Delta _r (s) = (sI + T_r ) ^{-1}$, $s>0$ shows that $\Delta _r (s)$ converges $WOT$ to $\Delta (s) = (sI +T ) ^{-1}$.

Using $WOT$ convergence of the $\Delta _r (s)  = (sI + T_r ) ^{-1}$ to $\Delta (s)$ for $1\leq s \leq 2$ implies, by the resolvent formula,
$$ \frac{1}{\eps} \left( (I + T_r ) ^{-1} - ((1+\eps ) I + T_r ) ^{-1} \right) = (I+T_r ) ^{-1} ((1+\eps ) I + T_r )^{-1} , $$ is WOT-convergent to 
$$ (I +T ) ^{-1} ((1+\eps ) I +T ) ^{-1}, $$ for any $\eps \in [0,1]$. Since $(I + T _r ) ^{-1} ( (1 +\eps ) I + T _r ) ^{-1}$ is uniformly bounded for $\eps \in [0,1]$ and $0 < r <1$, taking the limit as $\eps \downarrow 0$ shows that 
$$ (I +T _r ) ^{-2} \stackrel{WOT}{\rightarrow} (I +T ) ^{-2}, $$ is $WOT-$convergent, and this implies  $SOT-$convergence of $(I +T _r ) ^{-1}$ since 
$$ \| (I + T_r ) ^{-1} h \| ^2 = \ip{h}{(I+T_r) ^{-2} h}. $$ Hence $T_r \rightarrow T$ in the strong resolvent sense. 
\end{proof} 
Having proved that $T$ is recovered from the $T_r$ in the sense of strong resolvent convergence, we consider the problem of exhibiting $T$ more explicitly. Ideally, one would like to prove that 
\[
T=(I-B(R)^*)^{-1} (I-B(R)^*B(R))(I-B(R) )^{-1}
\]
(suitably interpreted). One way of making this precise would be to claim that $\ran{I-B(R)}$ belongs to $\mr{Dom} \, \sqrt{T}$ and that for all $f,g\in \hardy$ 
\[
\langle T(I-B(R)f, (I-B(R))g\rangle = \langle (I-B(R)^*B(R))f, g\rangle.
\]
If this were true unrestrictedly, it would prove, for example, that $T=0$ (equivalently, $\mu_{ac}=0$, that is, $\mu$ is singular) if and only if $B(R)$ is an isometry (that is $B^\mr{t}$ is a right NC inner function). This correspondence--that a function $b$ is inner if and only if its Clark measure $\mu_b$ is singular--of course holds in one variable, via the standard form of Fatou's theorem as described in the introduction. Here, at present, our results are somewhat less satisfactory, but we are able to prove the desired inequality in one direction:

\begin{thm} \label{FatouForm}
$\mr{Dom} \sqrt{I+T}$ contains $\ran{I-B(R)}$ and
\be (I - B(R) ^* ) (I +T) (I - B(R) ) \leq 2 (I -\nbre B(R) ). \ee 
\end{thm}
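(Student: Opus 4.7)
The approach will be to derive the inequality from the exact radial identity at scale $0<r<1$ and then use strong resolvent convergence of $T_r\to T$ to pass to the limit via a weak lower semicontinuity argument.

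First, I would verify by direct multiplication the algebraic identity
\[
(I - B(rR)^*)\,(I + T_r)\,(I - B(rR)) \;=\; 2\bigl(I - \nbre B(rR)\bigr),\qquad 0<r<1.
\]
This follows from the definition of $T_r$ in Equation~(\ref{Trform}): expanding the left-hand side gives $(I-B(rR)^*)(I-B(rR)) + (I - B(rR)^*B(rR)) = 2I - B(rR) - B(rR)^*$. For each $f \in \hardy$, applying this identity produces
\[
\bigl\|\sqrt{I+T_r}\,(I - B(rR)) f\bigr\|^2_{\hardy}
\;=\; 2\bigl\langle f,\, (I - \nbre B(rR))\,f\bigr\rangle_{\hardy}.
\]

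Next I would pass to the limit $r\uparrow 1$. By Lemma~\ref{sotstar}, $B(rR)\to B(R)$ in $SOT\mbox{-}*$, so $(I-B(rR))f \to (I-B(R))f$ in norm and the right-hand side above converges to $2\langle f,(I-\nbre B(R))f\rangle$. In particular, the vectors
\[
f_r \;:=\; \sqrt{I+T_r}\,(I - B(rR))\,f
\]
are uniformly norm-bounded in $\hardy$. By Theorem~\ref{main}, $T_r$ converges to $T$ in the strong resolvent sense, which by the standard functional calculus for strong resolvent convergence (applied to the bounded continuous function $t\mapsto (1+t)^{-1/2}$ on $[0,\infty)$) yields $(I+T_r)^{-1/2}\stackrel{SOT}{\to}(I+T)^{-1/2}$.

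Now I would use a weak-limit identification. Let $\tilde f$ be any weak cluster point of $\{f_r\}$ along some subnet $r_k\uparrow 1$. Since $(I+T_{r_k})^{-1/2}\to (I+T)^{-1/2}$ in $SOT$ (hence strongly on norm-bounded sets, in the sense that it converges uniformly on compacts and preserves weak limits against a weakly convergent bounded sequence), and since $(I+T_{r_k})^{-1/2} f_{r_k} = (I - B(r_kR))f \to (I-B(R))f$ in norm, a standard argument identifies the weak limit:
\[
(I+T)^{-1/2}\tilde f \;=\; (I - B(R))\,f.
\]
Because $(I+T)^{-1/2}$ is bounded and injective with range $\mr{Dom}\sqrt{I+T}$, this forces $(I-B(R))f \in \mr{Dom}\sqrt{I+T}$ and $\tilde f = \sqrt{I+T}\,(I-B(R))f$. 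In particular $\tilde f$ is independent of the subnet, so the entire net $f_r$ converges weakly to $\sqrt{I+T}\,(I-B(R))f$. Applying weak lower semicontinuity of the norm then gives
\[
\bigl\|\sqrt{I+T}\,(I-B(R))f\bigr\|^2 \;\leq\; \liminf_{r\uparrow 1}\|f_r\|^2 \;=\; 2\bigl\langle f,(I-\nbre B(R))f\bigr\rangle,
\]
which is exactly the stated form inequality.

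The main obstacle I anticipate is the rigorous justification of passing the weak limit through the (SOT-convergent, unbounded-limit) square roots $(I+T_r)^{-1/2}$; this requires invoking that strong resolvent convergence of positive self-adjoint operators implies SOT convergence of all bounded continuous functions of the operators, and that SOT convergence of a uniformly bounded family of operators combined with weak convergence of a bounded sequence of vectors yields weak convergence of the images. Everything else is algebraic manipulation of the radial identity and routine use of Lemma~\ref{sotstar}.
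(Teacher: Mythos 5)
Your proof is correct, but it takes a genuinely different route from the paper's. The paper's argument never passes to vectors: it invokes the unbounded outer factorization $I+T=x(R)^*x(R)$ (Theorem \ref{tfactor}, Remark \ref{xformula}) and the harmonic majorant inequality $x(rR)^*x(rR)\leq I+\Phi_r(T)\leq I+T_r$ (Lemma \ref{subharlem}), sets $X(rR):=x(rR)(I-B(rR))$, observes that the same algebraic identity you use gives $X(rR)^*X(rR)\leq 2(I-\nbre B(rR))$, and then identifies the $WOT$ cluster points of the uniformly bounded net $X(rR)$ with $x(R)(I-B(R))$ via pointwise convergence of the NC functions (Lemma \ref{weakmultconv}), upgrading to $SOT$-$*$ convergence before passing to the limit in the operator inequality; this simultaneously shows $\ran{I-B(R)}\subseteq\mr{Dom}\,x(R)=\mr{Dom}\sqrt{I+T}$ and yields the extra structural fact that $X(R)$ is a bounded right multiplier whose "square" has the free harmonic majorant $2(I-\nbre B(R))$, which is then reused in Corollary \ref{innercor} and the closing discussion. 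You instead bypass the factorization and multiplier machinery entirely: you exploit the exact radial identity $(I-B(rR)^*)(I+T_r)(I-B(rR))=2(I-\nbre B(rR))$, the already-established strong resolvent convergence $T_r\to T$ of Theorem \ref{main} (legitimate, since that theorem precedes this one), functional calculus to get $(I+T_r)^{-1/2}\stackrel{SOT}{\to}(I+T)^{-1/2}$, and a weak-compactness/lower-semicontinuity argument on the vectors $f_r=\sqrt{I+T_r}\,(I-B(rR))f$, identifying the weak cluster point through the injective bounded operator $(I+T)^{-1/2}$ to get both the domain inclusion and the form inequality. Two routine points you should make explicit: the functional-calculus theorem for strong resolvent convergence is stated for bounded continuous functions on $\R$, so either extend $t\mapsto(1+t)^{-1/2}$ continuously off $[0,\infty)$ (all operators are positive) or use $SOT$-continuity of the square root on positive contractions applied to $(I+T_r)^{-1}$; and the step "$SOT$ convergence plus bounded weak convergence gives weak convergence of the images" uses self-adjointness (equivalently norm convergence of $(I+T_{r_k})^{-1/2}g$ paired against the bounded weakly convergent $f_{r_k}$). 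With those remarks your argument is complete; it is arguably more elementary given Theorem \ref{main}, at the cost of not producing the multiplier $X(R)$ and its harmonic majorization, which the paper gets for free.
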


\begin{proof}
By Theorem \ref{tfactor} we have that $x(R) ^* x(R) = I +T $, $x(R) = U ^* \sqrt{I +T}$ (by construction and also by polar decomposition) so that $\nbdom x(R) = \mr{Dom} \, \sqrt{I+T}$. We claim that $\ran{I - B(R)} \subseteq \mr{Dom} \, \sqrt{I+T} = \nbdom x(R)$. First define, for each $0<r<1$, $X(rR) := x(rR) (I - B(rR) )$. Then by Lemma \ref{subharlem},
\ba X(rR) ^* X(rR) & = & (I - B(rR) ^* ) x(rR)^* x(rR) (I - B(rR) ) \nn \\
& \leq & (I - B(rR) ^* ) (I +T _r ) (I - B(rR) ) \nn \\
& = & 2 (I - \nbre B(rR) ). \label{harmajor1} \ea 
It follows that $X(rR)$ is uniformly norm-bounded for $0<r<1$. Let $X_k (R) = X(r_k R)$ be any $WOT-$convergent subsequence with limit $\wt{X} (R)$. Then $X ^{\mr{t}} (r_k Z)$ necessarily converges pointwise to $\wt{X} ^\mr{t} (Z)$ for any $Z \in \B ^d _\N$ by Lemma \ref{weakmultconv}. However, we also have 
$$ X^{\mr{t}} (r_k Z) = (I - B^{\mr{t}} (r_k Z) ) x^{\mr{t}} (r_k Z ) \rightarrow X^{\mr{t}} (Z) := (I - B^{\mr{t}} (Z) ) x^{\mr{t}} (Z). $$ This proves that any such $WOT$ limit is unique and equal to $X(R) = x(R) (I - B(R) )$, so that, in particular, $\ran{I - B(R)} \subseteq \nbdom x(R)$, and $X(rR) \stackrel{WOT}{\rightarrow} X(R)$. Moreover, by Lemma \ref{sotstar}, since $X(rR)$ is uniformly norm bounded, $X(rR) \stackrel{SOT-*}{\rightarrow} X(R)$. Taking the limit of Equation (\ref{harmajor1}) then yields:
$$ X(R) ^* X(R) \leq 2 (I - \nbre B(R) ). $$
\end{proof}

\begin{cor} \label{innercor}
    Let $\mu =  \mu _B$ be the NC Clark measure of $B \in \scr{L} _d = [ \bH ^\infty _d ] _1$. If $B$ is inner (that is, $B(R) = M^R _{B ^{\mr{t}}}$ is an isometry) then $\mu$ is singular. 
\end{cor}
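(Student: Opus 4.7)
The plan is to apply Theorem \ref{FatouForm} directly. When $B(R)$ is an isometry, $B(R)^*B(R) = I$ and a direct expansion gives
\[
(I-B(R)^*)(I-B(R)) = 2I - B(R) - B(R)^* = 2(I-\nbre B(R)).
\]
Substituting this identity into the right-hand side of Theorem \ref{FatouForm} and cancelling with the corresponding piece of the left-hand side yields
\[
(I-B(R)^*)\,T\,(I-B(R)) \leq 0.
\]
Since $T \geq 0$ and $\ran{I-B(R)} \subseteq \mr{Dom}\,\sqrt{T}$ by Theorem \ref{FatouForm}, this forces $\sqrt{T}(I-B(R))f = 0$ for every $f \in \hardy$.

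The next step is to show $\ran{I-B(R)}$ is dense in $\hardy$, equivalently $\ker{I-B(R)^*} = \{0\}$. If $B(R)^*g = g$, then since $B(R)$ is an isometry,
\[
\|B(R)g - g\|^2 = 2\|g\|^2 - 2\,\nbre \ip{B(R)g}{g} = 2\|g\|^2 - 2\|g\|^2 = 0,
\]
so also $B(R)g = g$. Conjugating by $U_\mrt$ converts this to $B(L)\tilde{g} = \tilde{g}$ with $\tilde{g} := U_\mrt g$; reading this off as NC functions gives $(I - B(Z))\tilde{g}(Z) = 0$ for every $Z \in \B^d_\N$. Provided $B$ is not identically the constant $I$ (otherwise $H_B$ would be undefined), either the NC maximum modulus principle \cite[Lemma 6.11]{SSS} gives $\|B(Z)\| < 1$ throughout $\B^d_\N$, or $B \equiv \beta I$ with $\beta \in \partial\D \setminus \{1\}$; in either case $I - B(Z)$ is matrix-invertible at every $Z$, forcing $\tilde{g} \equiv 0$ and so $g = 0$.

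Combining the two steps, $\sqrt{T}$ is a closed operator that vanishes on the dense subspace $\ran{I-B(R)}$, so by closedness $\mr{Dom}\,\sqrt{T} = \hardy$ and $\sqrt{T} \equiv 0$, hence $T = 0$. Because $q_T$ is the closure of $q_{\mu_{ac}}$ with form core $\A$ (Theorem \ref{ACiffclosed}), this gives $\mu_{ac}(a_1^* a_2) = \ip{\sqrt{T}a_1}{\sqrt{T}a_2}_{\hardy} = 0$ for all $a_1, a_2 \in \A$. Specializing $a_1 = I$ yields $\mu_{ac}(L^\alpha) = 0$ for every $\alpha \in \F^d$, and self-adjointness then yields $\mu_{ac}(L^{\alpha *}) = 0$, so $\mu_{ac}$ vanishes on the norm-dense subspace $\A + \A^*$ and hence on $\scr{A}_d$ by continuity. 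Thus $\mu = \mu_s$ is singular.

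The main obstacle I anticipate is the density step: one must route the eigenvalue equation for the right multiplier $B(R)^*$ through the transpose unitary $U_\mrt$ to reduce to a statement about the left symbol $B(Z)$, where the NC maximum modulus principle finally rules out any nontrivial fixed vector. The opposite implication -- that singular NC measures must have inner Clark functions -- does not follow from this argument and, as the authors remark, appears to require tools beyond the harmonic majorant inequality of Theorem \ref{FatouForm}.
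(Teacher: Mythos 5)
Your proposal is correct and follows essentially the same route as the paper's proof: apply the harmonic majorant inequality of Theorem \ref{FatouForm}, use $B(R)^*B(R)=I$ to identify $2(I-\nbre B(R))$ with $(I-B(R)^*)(I-B(R))$, and conclude that $\sqrt{T}$ annihilates $\ran{I-B(R)}$, whence $T=0$ and $\mu_{ac}=0$. The only difference is that you also spell out the density of $\ran{I-B(R)}$ (via the fixed-vector argument and the NC maximum modulus principle) and the closed-kernel step needed to pass from $\sqrt{T}$ vanishing on that range to $T\equiv 0$, details which the paper's terse conclusion ``$I+T\leq I$ so $T\equiv 0$'' leaves implicit.
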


\begin{proof}
Recall that $B$ is inner if and only if $B(L)$, or equivalently $B(R) = U_{\mr{t}} B(L) U_{\mr{t}}$ are isometries. We have that the closure of $q_{\mu _{ac}}$ is  $q_T$ where $T$ is the strong resolvent limit of the $T_r = \nbre H_B (rR)$ if $\mu = \mu _B$. 
In particular for any $h := (I - B(R) ) g$, if $B$ is inner then:
\ba q_{(I+T)} (h,h) &\leq & 2\ip{g}{(I -\nbre B(R)) g}  \nn \\
& = & \ip{(I-B(R))g}{g} + \ip{g}{(I-B(R)) g} \nn \\
& = & \ip{(I-B(R)) g}{(I-B(R)) g} \quad \quad \quad \mbox{Using that } B(R) ^* B(R) = I \nn \\
& = & \ip{h}{h} = q_I (h,h). \nn \ea 
This proves that $I+T \leq  I$ so that $T \equiv 0$.
\end{proof}

Inspecting the proof of Theorem~\ref{FatouForm}, it is not too hard to show that we would obtain equality (rather than just the one inequality) if we knew that for all $B(R)$ we could factor
\begin{equation}\label{eqn:factor 1-ReB}
2I-B(R)^*-B(R) =G(R)^*G(R).
\end{equation}
for some $G \in R^\infty _d$. 
Indeed, if this is so then for all $r<1$ we would have
\[
(I-B(rR)^*)(I+T_r)(I-B(rR)) \geq G(rR)^*G(rR),
\]
and by taking $SOT$ limits
\[
(I-B(R)^*)x(R)^*x(R)(I-B(R)) \geq G(R)^*G(R) = 2(I- \nbre B(R)),
\]
as desired. In one variable the factorization (\ref{eqn:factor 1-ReB}) always holds, indeed on the circle we have 
\[
2-\overline{b(\zeta)} -b(\zeta) = |1-b(\zeta)|^2+ 1-|b(\zeta)|^2 \geq |1-b(\zeta)|^2,
\]
so that $2-\overline{b(\zeta)} -b(\zeta)$ is log-integrable, and hence there is an outer function $g$ with $2-\overline{b(\zeta)} -b(\zeta) =|g(\zeta)|^2$. Thus our results allow us to fully recover the known form of the Radon-Nikodym derivative in the one-variable case.


\begin{thebibliography}{10}

\bibitem{Fatou}
P.~Fatou.
\newblock S\'{e}ries trigonom\'{e}triques et s\'{e}ries de {T}aylor.
\newblock {\em Acta. Math.}, 37:335--400, 1906.

\bibitem{Clark1972}
D.N. Clark.
\newblock One dimensional perturbations of restricted shifts.
\newblock {\em J. Anal. Math.}, 25:169--191, 1972.

\bibitem{Aleks2}
A.~B. Aleksandrov.
\newblock Multiplicity of boundary values of inner functions. ({R}ussian).
\newblock {\em Izv. Akad. Nauk Arm. SSR}, 22:490--503, 1987.

\bibitem{Aleks1}
A.B. Aleksandrov.
\newblock On the existence of nontangential boundary values of
  pseudocontinuable functions. ({R}ussian).
\newblock {\em Zapiski Nauchnykh Seminarov POMI}, 222:5--17, 1995.

\bibitem{Saks}
E. Saksman.
\newblock An elementary introduction to {C}lark measures.
\newblock In {\em Topics in Complex Analysis and Operator Theory}, pages
  85--136. Servicio de Publicaciones de la Universidad de M{\'a}laga, 2007.

\bibitem{Hoff}
K.~Hoffman.
\newblock {\em Banach spaces of analytic functions}.
\newblock Prentice-Hall, Inc. 1962.

\bibitem{Pop-freeharm}
G.~Popescu.
\newblock Noncommutative transforms and free pluriharmonic functions.
\newblock {\em Advances in Mathematics}, 220:831--893, 2009.

\bibitem{JM-freeAC}
M.T. Jury and R.T.W. Martin.
\newblock Non-commutative {C}lark measures for the {F}ree and {A}belian
  {T}oeplitz algebras.
\newblock {\em J. Math. Anal. Appl.}, 456:1062--1100, 2017.

\bibitem{JM-freeCE}
M.T. Jury and R.T.W. Martin.
\newblock Column-extreme multipliers of the {F}ree {H}ardy space.
\newblock {\em J. Lond. Math. Soc.}, 101:457--489, 2020.

\bibitem{Pop-freeholo}
G.~Popescu.
\newblock Free holomorphic functions on the unit ball of {$B (\mathcal{H})
  ^n$}.
\newblock {\em J. Funct. Anal.}, 241:268--333, 2006.

\bibitem{Pop-freeholo2}
G.~Popescu.
\newblock Free holomorphic functions on the unit ball of {$\mathcal{B}
  (\mathcal{H}) ^n$}, {I}{I}.
\newblock {\em J. Funct. Anal.}, 258:1513--1578, 2010.

\bibitem{BMV}
J.A. Ball, G.~Marx, and V.~Vinnikov.
\newblock Noncommutative reproducing kernel {H}ilbert spaces.
\newblock {\em J. Funct. Anal.}, 271:1844--1920, 2016.

\bibitem{VinPop}
M.~Popa and V.~Vinnikov.
\newblock {$H^2$} spaces of non-commutative functions.
\newblock {\em Complex Analysis and Operator Theory}, 12:945--967, 2018.

\bibitem{KVV}
D.S. Kaliuzhny{\u{\i}}-Verbovetskyi{\u{\i}} and V.~Vinnikov.
\newblock {\em Foundations of free noncommutative function theory}, volume 199.
\newblock American Mathematical Society, 2014.

\bibitem{Nik-shift}
N.~K. Nikolskii.
\newblock {\em Treatise on the shift operator: spectral function theory}.
\newblock Springer-Verlag, 1986.

\bibitem{NF}
B.~Sz.-Nagy and C.~Foia\c{s}.
\newblock {\em Harmonic analysis of operators on \uppercase{H}ilbert space}.
\newblock American Elsevier publishing company, Inc., New York, N.Y., 1970.

\bibitem{Kosaki}
H.~Kosaki.
\newblock Lebesgue decomposition of states on a von {N}eumann algebra.
\newblock {\em Amer. J. Math.}, 107:697--735, 1985.

\bibitem{Takesaki}
M.~Takesaki.
\newblock On the conjugate space of operator algebra.
\newblock {\em Tohoku Math. J. (2)}, 10:194--203, 1958.

\bibitem{Connes}
A. Connes.
\newblock {\em Noncommutative geometry}.
\newblock Springer, 1994.

\bibitem{Simon1}
B.~Simon.
\newblock A canonical decomposition for quadratic forms with applications to
  monotone convergence theorems.
\newblock {\em J. Funct. Anal.}, 28, 1978.

\bibitem{vN3}
J.~von Neumann.
\newblock On rings of operators {I}{I}{I}.
\newblock {\em Ann. Math.}, 41:94--161, 1940.

\bibitem{Pop-entropy}
G.~Popescu.
\newblock {\em Entropy and multivariable interpolation}.
\newblock American Mathematical Society, 2006.

\bibitem{Upbook}
H.~Upmeier.
\newblock {\em Toeplitz operators and index theory in several complex
  variables}, volume~81.
\newblock Birkh{\"a}user, 2012.

\bibitem{SSS}
G.~Salomon, O.~Shalit, and E.~Shamovich.
\newblock Algebras of bounded noncommutative analytic functions on subvarieties
  of the noncommutative unit ball.
\newblock {\em Trans. Amer. Math. Soc.}, 370:8639--8690, 2018.

\bibitem{Voic2}
D.V. Voiculescu.
\newblock Free analysis questions {I}{I}: the {G}rassmannian completion and the
  series expansions at the origin.
\newblock {\em J. Reine Angew. Math.}, 645:155--236, 2010.

\bibitem{JMS-NCrat}
M.T. Jury, R.T.W. Martin, and E.~Shamovich.
\newblock Non-commutative rational functions in the full {F}ock space.
\newblock {\em Trans. Amer. Math. Soc.}, https://doi.org/10.1090/tran/8418,
  2021.

\bibitem{Pop-dil}
G.~Popescu.
\newblock Isometric dilations for infinite sequences of noncommuting operators.
\newblock {\em Trans. Amer. Math. Soc.}, 316:523--536, 1989.

\bibitem{Ag-Mc}
J.~Agler and J.E. McCarthy.
\newblock Global holomorphic functions in several non-commuting variables.
\newblock {\em Canad. J. Math.}, 67:241--285, 2015.

\bibitem{Taylor}
J.L. Taylor.
\newblock A general framework for a multi-operator functional calculus.
\newblock {\em Adv. Math.}, 9:183--252, 1972.

\bibitem{Taylor2}
J.L. Taylor.
\newblock Functions of several noncommuting variables.
\newblock {\em Bull. Amer. Math. Soc.}, 79:1--34, 1973.

\bibitem{Voic}
D.V. Voiculescu.
\newblock Free analysis questions {I}: Duality transform for the coalgebra of
  {$\partial _{X: B}$}.
\newblock {\em Int. Math. Res. Not.}, 16:793--822, 2004.

\bibitem{Tak-dual}
M.~Takesaki.
\newblock A duality in the representation theory of {$C^*-$}algebras.
\newblock {\em Ann. Math.}, 85:370--382, 1967.

\bibitem{AgMcY}
J.~Agler, J.E. McCarthy, and N.J. Young.
\newblock Non-commutative manifolds, the free square root and symmetric
  functions in two non-commuting variables.
\newblock {\em Trans. Lond. Math. Soc.}, 5:132--183, 2018.

\bibitem{Aron-RKHS}
N.~Aronszajn.
\newblock Theory of reproducing kernels.
\newblock {\em Trans. Amer. Math. Soc.}, 68:337--404, 1950.

\bibitem{Paulsen-rkhs}
V.~Paulsen and M.~Raghupathi.
\newblock {\em An Introduction to the theory of reproducing kernel {H}ilbert
  spaces}.
\newblock Cambridge Studies in Advanced Mathematics, 2016.

\bibitem{DP-inv}
K.R. Davidson and D.R. Pitts.
\newblock Invariant subspaces and hyper-reflexivity for free semigroup
  algebras.
\newblock {\em Proc. Lond. Math. Soc.}, 78:401--430, 1999.

\bibitem{Ball-Fock}
J.A. Ball, V.~Bolotnikov, and Q.~Fang.
\newblock Schur-class multipliers on the {F}ock space: de {B}ranges-{R}ovnyak
  reproducing kernel spaces and transfer-function realizations.
\newblock In {\em Operator Theory, Structured Matrices, and Dilations, Theta
  Series Adv. Math., Tiberiu Constantinescu Memorial Volume}, volume~7, pages
  101--130. 2007.

\bibitem{JM-ncld}
M.T. Jury and R.T.W. Martin.
\newblock Lebesgue decomposition of non-commutative measures.
\newblock {\em Int. Math. Res. Not.}, In press, 2020.

\bibitem{DK-dilation}
K.R. Davidson and E.G. Katsoulis.
\newblock Dilation theory, commutant lifting and semicrossed products.
\newblock {\em Doc. Math.}, 16:781--868, 2011.

\bibitem{Cuntz}
J.~Cuntz.
\newblock Simple {$C^*-$}algebras generated by isometries.
\newblock {\em Comm. Math. Phys.}, 57:173--185, 1977.

\bibitem{JM-freeSmirnov}
M.T. Jury and R.T.W. Martin.
\newblock Operators affiliated to the free shift on the free {H}ardy space.
\newblock {\em J. Funct. Anal.}, 277:108285, 2019.

\bibitem{JM-F2Smirnov}
M.T. Jury and R.T.W. Martin.
\newblock The {S}mirnov classes for the {F}ock space and complete {P}ick spaces.
\newblock {\em Indiana Univ. Math. J.}, 70:269--284, 2021.

\bibitem{Kato}
T.~Kato.
\newblock {\em Perturbation theory for linear operators}.
\newblock Springer, 1976.

\bibitem{RnS1}
M.~Reed and B.~Simon.
\newblock {\em Methods of Modern Mathematical Physics vol. {$1$}, Functional
  Analysis}.
\newblock Academic Press, San Diego, CA, 1980.

\bibitem{BrownHalmos}
P.R. Halmos and A.~Brown.
\newblock Algebraic properties of {T}oeplitz operators.
\newblock {\em J. Reine Angew. Math.}, 213:89--102, 1963.

\bibitem{Pop-vN}
G.~Popescu.
\newblock Von {N}eumann inequality for {$(B (\mathcal{H}) ^n ) _1$}.
\newblock {\em Math. Scand.}, 68:292--304, 1991.

\bibitem{Sha2013}
O.M. Shalit.
\newblock Operator theory and function theory in {D}rury--{A}rveson space and
  its quotients.
\newblock In {\em Handbook of Operator Theory}, pages 1125--1180. Springer,
  2015.

\bibitem{DLP-ncld}
K.R. Davidson, J.~Li, and D.R. Pitts.
\newblock Absolutely continuous representations and a {K}aplansky density
  theorem for free semigroup algebras.
\newblock {\em J. Funct. Anal.}, 224:160--191, 2005.

\bibitem{analysisnow}
G.K. Pedersen.
\newblock {\em Analysis now}.
\newblock Springer, 2012.

\bibitem{Pop-NCdisk}
G.~Popescu.
\newblock Non-commutative disc algebras and their representations.
\newblock {\em Proc. Amer. Math. Soc.}, 124:2137--2148, 1996.

\bibitem{Paulsen}
V.~Paulsen.
\newblock {\em Completely Bounded Maps and Operator Algebras}.
\newblock Cambridge University Press, New York, NY, 2002.

\end{thebibliography}

\end{document}